\theoremstyle:=definition,remark,plain\do{%
        \expandafter\g@addto@macro\csname th@\theoremstyle\endcsname{%
            \addtolength\thm@preskip\parskip
            }%
        }
\newcommand{\mc}{\mathcal}
\newtheorem{theorem}{Theorem}[section]
\newtheorem{lemma}[theorem]{Lemma}
\newtheorem{conjecture}[theorem]{Conjecture}
\newtheorem{question}[theorem]{Question}
\newtheorem{defn}[theorem]{Definition}
\newtheorem{claim}[theorem]{Claim}
\title{Monochromatic paths and cycles in $2$-edge-colored graphs with large minimum degree}
\date{\today}
\author{
J\' ozsef Balogh~\thanks{Department of Mathematics, University of Illinois at Urbana--Champaign, IL, 
USA, and Moscow Institute of Physics and Technology, Russian Federation, 
jobal@illinois.edu. 
Research of this author is partially supported by  NSF Grant  DMS-1764123, Arnold O. Beckman Research Award (UIUC) Campus Research Board 18132 and the Langan Scholar Fund (UIUC).}
\and Alexandr Kostochka \thanks{Department of Mathematics, University of Illinois at Urbana--Champaign, IL, USA and
Sobolev Institute of Mathematics, Novosibirsk 630090, Russia, kostochk@math.uiuc.edu. Research of this author is supported in part by NSF grant
 DMS-1600592, Arnold O. Beckman Research Award (UIUC) RB20003  and by grants 18-01-00353  and 19-01-00682 of the Russian Foundation for Basic Research.}
  \and Mikhail Lavrov\thanks{Department of Mathematics, Kennesaw State University, Marietta, GA, mlavrov@kennesaw.edu; the work was partially done while M. Lavrov was a postdoc at Department of Mathematics, University of Illinois at Urbana–-Champaign.}
 \and Xujun Liu\thanks{Coordinated Science Laboratory, University of Illinois at Urbana--Champaign, IL, USA, xliu150@illinois.edu; the work was partially done while X. Liu was a PhD student at Department of Mathematics, University of Illinois at Urbana--Champaign. Research of this author was supported by the Waldemar J., Barbara G., and Juliette Alexandra Trjitzinsky Fellowship.
}
 }
\date{\it Dedicated to the memory of Richard H. Schelp}
\begin{document}
	\maketitle

\begin{abstract}
A graph $G$ {\em arrows} a graph $H$  if in every $2$-edge-coloring of $G$ there exists a monochromatic copy of $H$.
Schelp had the idea that if the complete graph $K_n$ arrows a small graph $H$, then every ``dense" subgraph of $K_n$ also arrows $H$, and he outlined some problems in this direction. Our main result is in this spirit. We prove that for every sufficiently large $n$, if
$n = 3t+r$ where $r \in \{0,1,2\}$ and $G$ is an $n$-vertex graph with $\delta(G) \ge (3n-1)/4$, then for every  $2$-edge-coloring of $G$,  either there are cycles of every length $\{3, 4, 5, \dots, 2t+r\}$ of the same  color, or 
there are cycles of every even length $\{4, 6, 8, \dots, 2t+2\}$ of the same color.

Our result is tight in the sense that no longer cycles (of length $>2t+r$) can be guaranteed and the minimum degree condition cannot be reduced. It also implies the conjecture of Schelp  that for every sufficiently large $n$,  every $(3t-1)$-vertex graph $G$ with minimum degree larger than $3|V(G)|/4$ arrows the path $P_{2n}$ with $2n$ vertices.
Moreover, it implies for sufficiently large $n$  the conjecture by Benevides, \L uczak, Scott, Skokan and White that for $n=3t+r$ where $r \in \{0,1,2\}$ and every $n$-vertex graph $G$ with $\delta(G) \ge 3n/4$, in each $2$-edge-coloring of $G$ there exists a monochromatic cycle of length at least $2t+r$. 
\\
\\
 {\small{\em Mathematics Subject Classification}: 05C15, 05C35, 05C38.}\\
 {\small{\em Key words and phrases}:  Ramsey number,  Regularity Lemma, paths and cycles.}
\end{abstract}

\section{Introduction}

We consider Ramsey properties of paths and cycles for $2$-edge-colorings of graphs. 
For graphs $G_0,G_1,G_2$ we write $G_0 \mapsto (G_1,G_2)$ if for every $2$-edge-coloring of $G_0$, for some $i\in [2]$ there is a copy of $G_i$ with all edges of color $i$. If $G_0 \mapsto (G_1,G_1)$, we  say that $G_0$ {\em arrows} $G_1$.
The {\em Ramsey number} $R(G_1,G_2)$ is the minimum $N$ such that $K_N\mapsto (G_1, G_2)$.

We  denote by $P_k$ the path with $k$ vertices, and
by $C_k$ the  cycle with $k$ vertices. 
The  {\em circumference}, $c(G)$, of a graph $G$ is the length of a longest cycle in $G$. A graph $G$ on $n$ vertices is {\em pancyclic} if it contains cycles of every integer length in $[3,n]$. Let $E(G) = E(R) \cup E(B)$ be a $2$-edge-coloring of $G$; {\em the monochromatic circumference} is the length of a longest monochromatic cycle in $G$. A balanced bipartite graph $G$ on $2n$ vertices is {\em bipancyclic} if it contains cycles of every even length in $[4,2n]$.

The study of Ramsey-type problems of paths started from the seminal paper by Gerencs\' er and Gy\' arf\' as~\cite{GG1}. They proved that {\em for any choice of positive integers $k\geq \ell$,  $R(P_{k},P_{\ell})=k-1+\left\lfloor \frac{\ell}{2}\right\rfloor$,} which implies that $K_{3n-1}$ arrows $P_{2n}$.

Later, there was a series of papers proving that some dense subgraphs of $K_{3n-1}$ also arrow $P_{2n}$. In particular,  Gy\' arf\' as,  Ruszink\' o,   S\' ark\"{o}zy and  Szemer\' edi~\cite{GRSS0} showed that
{\em  $K_{n,n,n}\mapsto (P_{2n-o(n)},P_{2n-o(n)})$.}
Their conjecture that {\em $K_{n,n,n}\mapsto (P_{2n+1},P_{2n+1})$ } was recently proved for  large $n$ in~\cite{BKLL2}.

More generally, Schelp had the idea that for many graphs $H$, if $K_n$ arrows $H$ then each ``sufficiently dense" subgraph of $K_n$ also arrows $H$. In~\cite{schelp12} he discussed some specific graphs $H$, and different notions of density. One natural measure of density is the minimum degree.
Schelp asked some questions and outlined possible directions of study of this phenomenon. 

Li, Nikiforov and Schelp~\cite{LNS} conjectured the following and proved a partial result.

\begin{conjecture}[\cite{LNS}]\label{LNS-conj}
Let $G$ be a graph of order $n \ge 4$ with $\delta(G)>3n/4$ and let $E(G) = E(R) \cup E(B)$ be a $2$-edge-coloring of $G$. For all $\ell \in [4, \lceil n/2 \rceil]$, $G$ arrows $C_{\ell}$.
\end{conjecture}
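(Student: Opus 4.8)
The plan is to derive the conjecture, for all sufficiently large $n$, from the main theorem. Write $n=3t+r$ with $r\in\{0,1,2\}$. Since $\delta(G)>3n/4\ge (3n-1)/4$, the main theorem applies and yields, in some color, either all cycles of lengths $\{3,4,\dots,2t+r\}$ or all cycles of even lengths $\{4,6,\dots,2t+2\}$. Because $r\le 2$, a routine check gives $\lceil n/2\rceil\le 2t+r\le 2t+2$ (once $t\ge 1$). In the first case we are already done, as $[4,\lceil n/2\rceil]\subseteq\{3,\dots,2t+r\}$. So assume the second case holds: some color, say \emph{red}, contains $C_4,C_6,\dots,C_{2t+2}$. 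This already produces a monochromatic $C_\ell$ for every \emph{even} $\ell\in[4,\lceil n/2\rceil]$, so the task reduces to producing a monochromatic cycle of each \emph{odd} length $\ell\in[5,\lceil n/2\rceil]$.

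The odd lengths will be obtained from Bondy's pancyclicity theorem (if $\delta(H)\ge|V(H)|/2$ then $H$ is pancyclic, unless $H$ is a balanced complete bipartite graph) together with the following structural input, which must be extracted from the analysis behind the main theorem: \emph{in the second case, one color class is bipartite}. Granting this, say the bipartite class is red, pick a bipartition $(X,Y)$ of $V(G)$ with every red edge joining $X$ to $Y$ and $|X|\ge|Y|$, so that $|X|\ge\lceil n/2\rceil$ and every edge of $G$ inside $X$ is blue; thus $G[X]$ is an all-blue graph. For $v\in X$, $\deg_{G[X]}(v)\ge \deg_G(v)-|Y|>\frac{3n}{4}-(n-|X|)=|X|-\frac{n}{4}\ge\frac{|X|}{2}$, the last step using $|X|\ge n/2$. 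Hence $\delta(G[X])>|X|/2$, so $G[X]$ is not a balanced complete bipartite graph, and Bondy's theorem shows $G[X]$ is pancyclic. Therefore blue contains $C_\ell$ for every $\ell\in[3,|X|]\supseteq[4,\lceil n/2\rceil]$, which completes the proof (and in fact re-covers the even lengths as well). If instead it is blue that is bipartite, the same argument with the two colors exchanged gives red cycles of all lengths up to $\lceil n/2\rceil$.

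I expect the main obstacle to be the structural claim that some color class is bipartite in the second case, which is stronger than the bare dichotomy of the main theorem and requires revisiting its Regularity-Lemma-based proof. The second outcome is attained only by colorings close to the extremal configuration $K_{n_1,n_2,n_3,n_4}$ on four nearly-equal parts in which red is the complete bipartite graph between two of the parts, and one must sharpen the approximate conclusion (``red is bipartite after deleting $o(n)$ vertices'') to an exact one. This is exactly where the strict inequality $\delta(G)>3n/4$ is used: when $\delta(G)=3n/4$ the complete $4$-partite graph with that coloring realizes the second outcome with \emph{no} monochromatic odd cycle at all, but $\delta(G)>3n/4$ makes equal parts impossible and forbids a vertex whose red neighborhood fills a whole side, so the excess edges forced by the strict bound either restore an exact bipartition of red or create a short monochromatic odd cycle attached to a large monochromatic bipartite subgraph, from which every required odd length follows by the standard rotation--extension argument in bipartite graphs. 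I would organize the write-up around first proving, as a lemma, that a $2$-edge-colored $n$-vertex graph with $\delta>3n/4$ falling into the second outcome of the main theorem has a bipartite color class, and then invoking Bondy's theorem as above; should one be able to prove only ``close to bipartite'', a stability version of Bondy's theorem applied to the all-blue graph $G[X]$ (pancyclic unless close to $K_{|X|/2,|X|/2}$, which $G[X]$ cannot be without already containing the missing short odd cycles) would bridge the gap. The finitely many small $n$ not covered by the main theorem can be handled by the partial result of Li, Nikiforov and Schelp or by direct inspection.
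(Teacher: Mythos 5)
First, a point of scope: Conjecture~\ref{LNS-conj} is not proved in this paper at all --- it is quoted from Li, Nikiforov and Schelp as motivation, and the only progress recorded here is the partial result of Theorem~\ref{12} (even that is restricted to $\ell\le(1/8-\epsilon)n$ and large $n$) together with Theorem~\ref{maintheorem2}, whose second alternative deliberately gives only \emph{even} cycle lengths. So there is no proof in the paper for you to have matched, and your proposal should be judged as an attempt at an open problem.

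As such, it has a genuine gap exactly where you flag it, and that gap is the whole problem. Your reduction via Theorem~\ref{maintheorem2} plus Theorem~\ref{bondy} is fine \emph{granted} the structural claim that, whenever the first alternative fails, one color class of $G$ is (exactly) bipartite; but nothing in the paper supplies this, and it cannot simply be ``extracted'' from the regularity-based proof. The analysis of the near-extremal case (Lemma~\ref{four-parts-2} and Section~\ref{four-part-section-2}) only shows that the color classes are bipartite-like after discarding $V_0$, the small vertex covers, and the exceptional sets $X_R,X_B$; indeed the condition $\delta(G)\ge(3n-1)/4$ forces $|X_R|+|X_B|\ge 3$, so in the critical configurations a color class is typically \emph{not} bipartite --- there are a few vertices carrying red (or blue) odd structure --- and the real task is to show that these exceptional vertices, together with the Hamiltonian bi-connectedness of the four nearly complete bipartite pairs (Lemma~\ref{bi-connected2}), generate monochromatic odd cycles of \emph{every} length up to $\lceil n/2\rceil$. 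That argument is not sketched in your proposal beyond the phrase ``standard rotation--extension,'' and the fallback via a ``stability version of Bondy's theorem'' is likewise unproved. Two smaller issues: the claim that the second outcome ``is attained only by colorings close to the four-part extremal configuration'' is false as stated (the alternatives of Theorem~\ref{maintheorem2} are not exclusive), so the claim you need must be phrased as ``if the first alternative fails, then\dots''; and the conjecture is asserted for all $n\ge 4$, while your argument covers only $n$ above the (regularity-lemma-sized) threshold $n_0$ --- neither Theorem~\ref{12} (which also needs $n$ large) nor ``direct inspection'' realistically handles the remaining range.
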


\begin{theorem}[\cite{LNS}]\label{12}
Let $\epsilon > 0$ and $G$ be a graph of sufficiently large order $n$ with minimum degree $\delta(G)>3n/4$. Let $E(G) = E(R) \cup E(B)$ be a $2$-edge-coloring of $G$. For all $\ell \in [4, (1/8 - \epsilon)n]$, $G$ arrows $C_{\ell}$.
\end{theorem}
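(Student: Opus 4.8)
The plan is to combine the Szemer\'edi Regularity Lemma with \L uczak's method of connected matchings, reducing the statement to a combinatorial question about monochromatic substructures in a dense $2$-edge-colored reduced graph; a regularity-free variant would instead try to locate a dense monochromatic subgraph of $G$ directly and apply Bondy's pancyclicity theorem, but the relevant densities turn out to be borderline and seem to give a weaker bound.

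First apply the multicolored Regularity Lemma to $G$ with the red/blue coloring, obtaining clusters $V_1,\dots,V_k$ (plus a small exceptional set), each of size $L=(1-o(1))n/k$, almost every pair being $\varepsilon$-regular in both colors. Form the reduced graph $\mc R$ on $[k]$, joining $i$ and $j$ by an edge of color $c$ whenever $(V_i,V_j)$ is $\varepsilon$-regular of density at least a fixed $d$ in color $c$; then $\delta(\mc R)\ge(3/4-o(1))k$ and every edge of $\mc R$ receives a color. The lifting statement we want is: if $\mc R$ contains a subgraph $\mc S$ lying in one color class which is connected, non-bipartite (contains a triangle), and has a matching of size $s$, then $G$ contains a monochromatic $C_\ell$ of that color for every $\ell$ with $3\le\ell\le(2s-o(1))L$. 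Indeed, a blown-up triangle of $\mc S$ already realizes all lengths up to $(3-o(1))L$, and the matching edges, expanded into long paths and chained together inside the component, add increments of about $2L$, with parity corrected by routing through the triangle. Hence it suffices to find, in every $2$-edge-coloring of a $k$-vertex graph $\mc R$ with $\delta(\mc R)\ge(3/4-o(1))k$, a monochromatic connected non-bipartite subgraph with a matching of size at least $(1/16+o(1))k$, since such a subgraph realizes every $\ell\le(1/8-\varepsilon)n$.

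To produce it, observe that $|E(\mc R)|\ge(3/4-o(1))\binom{k}{2}$, so one color --- say red --- carries at least $(3/8-o(1))\binom{k}{2}$ edges; choose a vertex $v$ with $d_R(v)\ge(3/8-o(1))k$ and let $\mc C$ be its red component, so $|\mc C|\ge(3/8-o(1))k$. If two red-neighbours $a,b$ of $v$ are joined in red, then $vab$ is a red triangle in $\mc C$ and, using the degree condition, one extracts a red matching of size $\ge(1/16+o(1))k$ from $\mc C$; if no such pair exists, then $N_R(v)$ spans a \emph{blue} clique on $\ge(3/8-o(1))k$ vertices, which contains a non-bipartite blue subgraph with a matching of size $\ge(3/16-o(1))k$, and we are done. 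The residual case is when the relevant monochromatic component is essentially bipartite in one color; one then shows that within a part of this bipartition all $\mc R$-edges have the other color, so that color is dense there, and iterates --- the iteration terminating because the $(3/4-o(1))k$ degree bound survives the linear-sized losses incurred each time.

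The crux --- and where essentially all the work goes --- is this last point: controlling the ``near-complete-bipartite'' configurations. Unlike the longest-monochromatic-cycle problem, here we need \emph{every} length, including odd ones and very short ones, so a monochromatic connected matching by itself is not enough: a genuinely non-bipartite monochromatic connected piece carrying a large matching is required. Such a piece exists on a linear fraction of the vertices (this is, morally, what the conjectured range $\ell\le\lceil n/2\rceil$ asserts), but pinning one down with a sufficiently large matching through the crude counting above is exactly what limits the argument to $\ell\le(1/8-\varepsilon)n$; closing the gap to the conjecture would require a delicate stability analysis of the near-extremal $2$-colorings, which this approach does not attempt.
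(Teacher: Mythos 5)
A preliminary remark: the paper does not prove Theorem~\ref{12} at all --- it is quoted from Li, Nikiforov and Schelp~\cite{LNS} --- so there is no internal proof to measure yours against. The closest in-paper analogue is the stronger (for large $n$) Theorem~\ref{maintheorem2}, whose proof goes through the stability Lemma~\ref{stability-lemma} of~\cite{BLSSW} and then two long sections of near-extremal analysis.

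Judged on its own, your regularity/connected-matching skeleton is standard and the numerology is right: a monochromatic connected subgraph of the reduced graph containing a triangle and a matching of size $(1/16+o(1))k$ would indeed lift to monochromatic cycles of every length in $[4,(1/8-\epsilon)n]$. The genuine gap is that the combinatorial lemma you reduce to is never actually proved, and the sketch offered for it does not go through as written. (i) In your second case, $N_R(v)$ does \emph{not} span a blue clique: $\mc R$ only has minimum degree $(3/4-o(1))k$, so each vertex of $N_R(v)$ may be non-adjacent to up to $(1/4+o(1))k$ others; what you really get is a blue graph on about $(3/8)k$ vertices with minimum degree about $k/8$, and neither its connectivity nor its non-bipartiteness (both indispensable --- you need short odd cycles) follows without further argument, while the matching guarantee drops from your claimed $(3/16-o(1))k$ to roughly $k/16$ per component. (ii) In your first case, a red triangle through $v$ plus the degree condition does not ``extract'' a red matching of size $(1/16+o(1))k$ \emph{inside the red component of $v$}: the global red edge count gives a large red matching somewhere (Erd\H{o}s--Gallai), but a priori that matching can live in a different, possibly bipartite, red component, and the component of $v$ could be essentially a star with one extra edge; tying a large matching to a non-bipartite monochromatic component is exactly the content of the lemma, not a consequence of the counting. (iii) The ``residual near-bipartite case'' with its unspecified iteration is precisely where the extremal colorings live (red complete bipartite between two halves with blue inside, or the four-part colorings of the paper's Example~2), and you concede yourself that this is where essentially all the work goes. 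As it stands the proposal is a plausible plan rather than a proof; to close it one needs either a stability analysis of these configurations (this is what Lemma~\ref{stability-lemma} together with Sections~\ref{sparse-set-section-2} and~\ref{four-part-section-2} supplies for the paper's stronger theorem) or the direct argument of~\cite{LNS} itself.
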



Moreover, having in mind that $R(P_{2n},P_{2n})=3n-1$, Schelp~\cite{schelp12} posed the following conjecture.

\begin{conjecture}[\cite{schelp12}]\label{schelp}
Suppose that $n$ is large enough and $G$ is a graph on $3n-1$ vertices with minimum degree larger than $3|V(G)|/4$. 
Then $G$ arrows $P_{2n}$.
\end{conjecture}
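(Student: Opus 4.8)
The plan is to deduce Conjecture~\ref{schelp} directly from our main result. Let $G$ be a graph on $m := 3n-1$ vertices with $\delta(G) > 3m/4$, and fix an arbitrary $2$-edge-coloring $E(G) = E(R) \cup E(B)$. First I would rewrite $m$ in the shape required by the main theorem: since $3n-1 = 3(n-1) + 2$, we have $m = 3t+r$ with $t = n-1$ and $r = 2$. The degree hypothesis carries over at once, since $\delta(G) > 3m/4 \ge (3m-1)/4$, and $m$ is large once $n$ is; so the main theorem applies to this coloring.

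Applying it, one of two alternatives holds: either there are monochromatic cycles of every length in $\{3,4,\dots,2t+r\}$, or there are monochromatic cycles of every even length in $\{4,6,\dots,2t+2\}$. The key point is that for a host graph on exactly $3n-1$ vertices both ranges terminate at the same value, namely $2t+r = 2(n-1)+2 = 2n$ and $2t+2 = 2(n-1)+2 = 2n$. Hence in either alternative $G$ contains a monochromatic $C_{2n}$, and deleting any one of its edges leaves a monochromatic path on $2n$ vertices, i.e., a monochromatic $P_{2n}$. Since the coloring was arbitrary, $G \mapsto (P_{2n}, P_{2n})$, which is precisely the conjecture.

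I do not expect a genuine obstacle in this derivation; all of the difficulty is absorbed into the proof of the main theorem. The only things to check here are the parameter bookkeeping --- that a graph on $3n-1$ vertices falls exactly into the boundary case $r = 2$, where \emph{both} alternatives of the main theorem already guarantee a cycle of the full target length $2n$ --- together with the trivial containment $C_{2n} \supseteq P_{2n}$. It is also worth observing that the argument in fact uses slightly less than Schelp's hypothesis: with $m = 3n-1$, the main theorem only needs $\delta(G) \ge (3m-1)/4$, which is weaker than $\delta(G) > 3m/4$, so $G \mapsto (P_{2n}, P_{2n})$ holds under this milder degree bound as well.
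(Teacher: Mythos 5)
Your proposal is correct and follows essentially the same route as the paper: set $t = n-1$, $r = 2$, apply the main theorem (the paper passes through the intermediate statement that there is a monochromatic cycle of length at least $2t+r = 2n$, which is exactly your observation that both alternatives reach length $2n$), and extract a monochromatic $P_{2n}$ from the long monochromatic cycle. Your closing remark that $\delta(G) \ge (3|V(G)|-1)/4$ suffices matches the paper's own strengthened restatement of the conjecture.
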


Gy\'arf\'as and S\'ark\"ozy~\cite{gyarfas12} and independently Benevides, \L uczak, Scott, Skokan and White~\cite{BLSSW}
proved an asymptotic version of Conjecture~\ref{schelp}.
In fact,  Benevides, \L uczak, Scott, Skokan and White~\cite{BLSSW} proved a stronger result:

\begin{theorem}[Theorem 1.8 in \cite{BLSSW}]
\label{benevides}
For every $0<\delta\leq 1/180$, there exists an integer $n_0=n_0(\delta)$ such that the following holds.
Let $G$ be a graph of order $n>n_0$ with $\delta(G)\geq 3n/4$. Suppose that $E(G)=E(R_G)\cup E(B_G)$ is 
a $2$-edge-coloring of $G$.  Then either $G$ has monochromatic circumference at least $(2/3+\delta/2)n$ or 
one of $R_G$ and $B_G$ contains cycles of all lengths $\ell\in[3,(2/3-\delta)n]$.
\end{theorem}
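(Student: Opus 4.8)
The plan is to follow the regularity-plus-connected-matchings scheme of \L uczak, as used by Gy\'arf\'as and S\'ark\"ozy~\cite{gyarfas12} and by Benevides, \L uczak, Scott, Skokan and White~\cite{BLSSW}. First I would apply the $2$-colored Szemer\'edi Regularity Lemma to $G$ with parameters $\epsilon\ll d\ll\delta$, obtaining clusters $V_1,\dots,V_k$ of common size $m\ge(1-\epsilon)n/k$ (plus a junk set of size $\le\epsilon n$) such that all but at most $\epsilon\binom{k}{2}$ pairs $(V_i,V_j)$ are $\epsilon$-regular simultaneously in red and in blue. Build the reduced graph $\mathcal R$ on $[k]$ by joining $i\sim j$ whenever $(V_i,V_j)$ is $\epsilon$-regular of combined density $\ge d$, and color $ij$ by the majority color on that pair; a routine averaging argument gives $\delta(\mathcal R)\ge(3/4-\delta/10)k$.

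Next I would set up two lifting lemmas, each proved by greedy embedding in $\epsilon$-regular pairs. \textbf{(E1)} If a color class of $\mathcal R$ contains a \emph{connected matching} of size $s$ (that is, $s$ independent edges lying in one component of that color class), then the same color class of $G$ contains a cycle of length at least $(2-\delta/10)sm$. \textbf{(E2)} If in addition that component contains a triangle, then the same color class of $G$ contains cycles of \emph{every} length in $[3,(2-\delta/10)sm]$: the triangle of $\mathcal R$ blows up to a copy of $K_{m',m',m'}$ with $m'\approx(1-\epsilon)m$, which already yields all lengths up to about $3m$, and splicing this gadget into the long path produced by the connected matching supplies the remaining lengths and fixes parity.

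Everything then reduces to the following purely combinatorial statement, which is the heart of the matter: \emph{if $\mathcal R$ is $2$-edge-colored with $\delta(\mathcal R)\ge(3/4-\delta/10)k$ and $k$ is large, then either \textup{(i)} some color class of $\mathcal R$ contains a connected matching of size $\ge(1/3+\delta/3)k$, or \textup{(ii)} some color class of $\mathcal R$ has a connected component containing both a triangle and a matching of size $\ge(1/3-\delta/3)k$.} Granting this, the theorem follows: in case (i), (E1) produces a monochromatic cycle of length at least $(2-\delta/10)(1/3+\delta/3)(1-\epsilon)n>(2/3+\delta/2)n$ once $\epsilon$ is small enough; in case (ii), (E2) produces a color class with cycles of all lengths in $[3,(2-\delta/10)(1/3-\delta/3)(1-\epsilon)n]\supseteq[3,(2/3-\delta)n]$.

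To prove the combinatorial statement I would argue by stability around the extremal colorings. Since $\delta(\mathcal R)>k/2$, the graph $\mathcal R$ has a near-perfect matching, so some color — say red — carries a matching of size $\ge k/4$; following~\cite{gyarfas12,BLSSW} one first upgrades this to a monochromatic \emph{connected} matching of size $(1/3-o(1))k$ by a case analysis on the component structures of the two color classes, using that a $2$-coloring of a graph of minimum degree $>k/2$ has a monochromatic component of almost full order, together with K\"onig-type bounds relating independent sets to matching sizes and the degree hypothesis. If this connected matching has size $\ge(1/3+\delta/3)k$ we are in case (i); otherwise it is only $(1/3+o(1))k$, and the degree condition then forces $\mathcal R$ to be $o(1)$-close to one of a short list of extremal configurations — typically one color is essentially a clique on about $2k/3$ vertices whose complementary vertex set carries the other color as a complete split graph — in each of which case (ii) is verified directly, the large monochromatic component being non-bipartite, indeed triangle-containing, with a matching of size $(1/3-o(1))k$. \textbf{The main obstacle is precisely this stability analysis}: classifying all near-extremal $2$-colorings of graphs of minimum degree $(3/4-o(1))k$ and checking that in each of them the large monochromatic component contains a triangle — it is here that a hypothesis of the shape $\delta\le 1/180$ enters, to leave enough room between the two thresholds $(1/3\pm\delta/3)k$ for the error terms accumulated in the upgrading step and in the stability step.
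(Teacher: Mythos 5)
First, note that the paper does not prove this statement at all: Theorem~\ref{benevides} is quoted verbatim from \cite{BLSSW}, and the present paper only reuses one ingredient of its proof (the stability lemma, reproduced as Lemma~\ref{stability-lemma}) for its own, stronger Theorem~\ref{maintheorem2}. So your proposal must be judged on its own merits, and there is a genuine gap in it.

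The gap is your ``purely combinatorial statement'' about the reduced graph, which is false as stated. Take $k$ divisible by $4$, partition $[k]$ into $U_1,U_2,U_3,U_4$ of size $k/4$, delete all edges between $U_1$ and $U_4$ and between $U_2$ and $U_3$, color the $U_1$--$U_2$ and $U_3$--$U_4$ edges blue, the $U_1$--$U_3$ and $U_2$--$U_4$ edges red, and color the edges inside each $U_i$ arbitrarily, say matching the incident bipartite color. This graph has minimum degree $(3/4-o(1))k$, yet every red component is contained in $U_1\cup U_3$ or $U_2\cup U_4$ and every blue component in $U_1\cup U_2$ or $U_3\cup U_4$; hence every monochromatic component spans at most $k/2+O(1)$ vertices and its largest matching has size at most $k/4+O(1)$, which is below both of your thresholds $(1/3+\delta/3)k$ and $(1/3-\delta/3)k$. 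So neither alternative (i) nor (ii) of your dichotomy holds, and no adjustment of constants repairs this: it is a genuine extremal configuration, essentially Example~2 of Section~2 and Case~(iii) of Lemma~\ref{stability-lemma}. Your closing stability discussion only anticipates the ``sparse color on a large set'' extremal coloring (Case~(ii) of Lemma~\ref{stability-lemma}), where your plan would indeed go through; it is exactly this four-part, bipartite-like coloring that it misses.

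Consequently the whole ``lift connected matchings from $\mathcal R$'' strategy cannot, by itself, deliver the second alternative of the theorem. The reduced graph genuinely can look like the configuration above even when $\delta(G)\ge 3n/4$, because the regularity lemma only preserves minimum degree up to a $(2d+\epsilon)n$ loss, so the few edges of $G$ that rule out the extremal example are invisible in $\mathcal R$. In this regime one must return to $G$ itself and exploit the full hypothesis $\delta(G)\ge 3n/4$ (together with the sparse leftover edges between the parts) to produce either a monochromatic cycle longer than $(2/3+\delta/2)n$ or monochromatic cycles of all lengths down to $3$ -- short cycles in particular cannot come from regularity at all and need a separate argument (compare the use of Theorem~\ref{bondys} and the direct arguments of Sections~\ref{sparse-set-section-2} and~\ref{four-part-section-2} in this paper, which handle precisely these two near-extremal cases for the refined statement). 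This is why \cite{BLSSW} prove a three-case stability lemma rather than the two-case dichotomy you propose, and why their (and this paper's) treatment of the four-part case is a substantial standalone argument rather than a corollary of the connected-matching machinery.
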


Theorem~\ref{benevides} implies an asymptotic version of Schelp's conjecture and provides not only monochromatic paths but also equally long monochromatic cycles. Thus Theorem~\ref{benevides} yields a partial result towards the following question of Li, Nikiforov and Schelp~\cite{LNS}:

\begin{question}[\cite{LNS}]\label{LNSq}
Let $0<c<1$ and $n$ be sufficiently large integer and 
$G$ be  a $2$-edge-colored  graph of order $n$ with $\delta(G)>c n$.
What is the minimum possible  monochromatic circumference of $G$?
\end{question}

{Benevides, \L uczak, Scott, Skokan and White~\cite{BLSSW} made the following definition and White~\cite{W} gave an asymptotic answer to Question~\ref{LNSq}.}

\begin{defn}
{For any positive integer $r$ and $0<c<1$, let $\Phi_r(c)$ be the supremum of
the set of real-valued $\phi$ such that any $r$-edge-colored graph $G$ of sufficiently large order $n$ with minimum degree at least $cn$ has monochromatic circumference at least $\phi n$.}
\end{defn}

\begin{theorem}[\cite{W}]
{For any $c \in (0,1)$, let $m_c$ be the unique integer such that $c \in  (\frac{2m_c+1}{(m_c+1)^2}, \frac{2m_c - 1}{m_c^2}]$. Then}
$   
\Phi_2(c) = 
     \begin{cases}
     \frac{2}{3} \hspace{4.85cm} \text{ for }c \in [\frac{3}{4}, 1), \\
     \min\{\frac{1}{m_c}, \frac{2}{3} c\} \hspace{3cm} \text{ for }c \in (0, \frac{3}{4}) \text{ except }c = \frac{5}{9},
      \\ \frac{10}{27} \hspace{4.7cm} \text{ for }c = \frac{5}{9}.
     \end{cases}
$
\end{theorem}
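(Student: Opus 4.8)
The plan is to prove, for each $c\in(0,1)$, a matching upper and lower bound on $\Phi_2(c)$. For the upper bound — exhibiting $2$-edge-colored graphs $G$ with $\delta(G)\ge cn$ whose monochromatic circumference is only $(\Phi_2(c)+o(1))n$ — I would look only among blow-ups of a fixed finite $2$-edge-colored graph $F$: replace each vertex of $F$ by an independent set of $\approx n/|V(F)|$ vertices, inheriting adjacencies and colors. Such a blow-up has minimum degree $(\delta(F)/|V(F)|)\,n-o(n)$, and its monochromatic circumference is $\rho(F)\,n+o(n)$, where $\rho(F)$ is read off from $F$: for a color whose densest monochromatic component is bipartite or acyclic it is twice the smaller side of that component divided by $|V(F)|$, and otherwise essentially the order of the largest non-acyclic monochromatic component divided by $|V(F)|$. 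Minimizing $\rho(F)$ subject to $\delta(F)/|V(F)|\ge c$ over all finite $F$ is a bounded optimization whose optimum is $\min\{1/m_c,(2/3)c\}$ for $c<3/4$, $c\ne5/9$, and $2/3$ for $c\ge3/4$. The value $(2/3)c$ is attained by the ``local'' example whose only nontrivial part is a clique on $\approx cn$ vertices $2$-colored in the Gerencs\'er--Gy\'arf\'as extremal way (red: two cliques in ratio $1:2$; blue: the complete bipartite graph between them); when $c\ge3/4$ this sits inside $K_n$ and gives exactly $2n/3$. The value $1/m_c$ comes from a different blow-up on $m_c{+}1$ clusters, and at $c=5/9$ that example needs $\delta>5n/9$ and is infeasible at exactly this density, so the best blow-up there yields $10n/27$ — this is the source of the exceptional value.

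For the lower bound, when $c\ge3/4$ I would invoke Theorem~\ref{benevides}: there either the monochromatic circumference is already $\ge(2/3+\delta/2)n$, or some color class contains cycles of all lengths up to $(2/3-\delta)n$; letting $\delta\to0$ gives $\Phi_2(c)\ge2/3$, matching the construction. When $c<3/4$ I would apply the Szemer\'edi Regularity Lemma to $G$ and color each dense regular pair by its majority color, obtaining a $2$-edge-colored reduced graph $\mc R$ with $\delta(\mc R)\ge(c-o(1))|V(\mc R)|$. By \L uczak's connected-matching method, a monochromatic connected matching of $\mc R$ saturating $k$ clusters inflates to a monochromatic cycle of $G$ of length $(1-o(1))\,k\,(n/|V(\mc R)|)$, so it suffices to prove the finitary statement: every $2$-edge-colored graph $H$ with $\delta(H)\ge c|V(H)|$ has a monochromatic connected matching saturating at least $(\Phi_2(c)-o(1))|V(H)|$ vertices.

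I expect the combinatorial core — this statement on monochromatic connected matchings in graphs of minimum degree at least $c|V(H)|$ — to be the main obstacle. I would attack it by a stability-type case analysis on a largest monochromatic component of $H$ (whether it contains almost all of $H$; whether it is ``bipartite-like'' or not; whether both colors have a large component), in each case either constructing a large monochromatic connected matching directly or showing $H$ is essentially a blow-up of one of the finite graphs $F$ above, which forces the stated bound; carrying the exact extremal threshold through this analysis is what produces $m_c$ in the formula. The genuinely delicate point is $c=5/9$, where one must separately verify that no construction beats $10n/27$ at exactly $\delta=5n/9$ (equivalently, that the $n/3$-blow-up is infeasible there for large $n$) and that the matching lower bound can be pushed from $n/3$ up to $(10/27-o(1))n$ at this single density. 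By comparison, the Regularity-Lemma reduction and the inflation of connected matchings into cycles are routine, modulo the usual care about irregular and low-density pairs and the $o(n)$ errors.
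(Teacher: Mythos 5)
This statement is not proved in the paper at all: it is White's theorem, quoted verbatim from~\cite{W} as background for Question~\ref{LNSq}, so there is no in-paper argument to compare you against; the relevant benchmark is White's own proof, which indeed follows the regularity/connected-matching/stability paradigm you describe. Your outline is therefore pointed in the right direction, but as a proof it has a genuine gap: everything that makes the theorem true is deferred. The ``finitary core'' you isolate --- that every $2$-edge-colored $H$ with $\delta(H)\ge c|V(H)|$ contains a monochromatic connected matching on at least $(\Phi_2(c)-o(1))|V(H)|$ vertices, with the exact piecewise formula, the thresholds $\frac{2m+1}{(m+1)^2}$ and $\frac{2m-1}{m^2}$ that define $m_c$, and the exceptional value at $c=\tfrac59$ --- is the entire content of the result, and you give no argument for it beyond naming a stability-style case split. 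Likewise on the upper-bound side you never exhibit the constructions achieving $\tfrac1{m_c}$ (you only gesture at ``a different blow-up on $m_c+1$ clusters'') or verify their minimum degree, and the claim that the optimum over blow-ups of finite colored graphs is $\min\{\tfrac1{m_c},\tfrac23c\}$ is asserted, not computed; since the lower bound must match these constructions exactly, nothing in the sketch actually pins down the formula, and in particular nothing explains why the answer at $c=\tfrac59$ is $\tfrac{10}{27}$ rather than $\tfrac13$ (your phrasing that the $\tfrac1{m_c}$-example ``needs $\delta>5n/9$'' is also backwards: the point is that its minimum degree falls just short of $\tfrac59 n$, so it is admissible for every $c<\tfrac59$ but not at the closed endpoint).

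Two smaller technical cautions if you pursue this route: the $\tfrac23c$ construction must be specified as a genuine graph of minimum degree $\ge cn$ (e.g.\ a disjoint union of cliques of order about $cn$, each colored in the Gerencs\'er--Gy\'arf\'as extremal pattern); a single clique on $cn$ vertices with the remaining vertices unspecified does not satisfy the degree condition, and attaching them carelessly can create longer monochromatic cycles. And in the reduction, a monochromatic connected matching in the majority-colored reduced graph only yields long \emph{even} monochromatic cycles; that suffices here because circumference only asks for one long cycle, but it is worth saying. The regularity and inflation steps you call routine are indeed routine; the rest is the theorem, and it is missing.
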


 Benevides, \L uczak, Scott, Skokan and White~\cite{BLSSW} also conjectured the following.

\begin{conjecture}[\cite{BLSSW}]\label{BLSSWconj}
Let $G$ be a graph of order $n$ with $\delta(G) \ge 3n/4$. Let $n = 3t+r$, where $r \in \{0,1,2\}$. Every $2$-edge-coloring $E(G) = E(R_G) \cup E(B_G)$ of $G$ has monochromatic circumference at least $2t+r$.
\end{conjecture}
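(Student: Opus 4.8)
We first note that the conjecture, for all sufficiently large $n$, is an immediate consequence of the stronger main theorem announced in the abstract: writing $n=3t+r$ and fixing a $2$-edge-coloring of an $n$-vertex graph $G$ with $\delta(G)\ge 3n/4\ge (3n-1)/4$, its first alternative yields a monochromatic $C_{2t+r}$ and its second a monochromatic $C_{2t+2}$ with $2t+2\ge 2t+r$ (as $r\le 2$); in both cases the monochromatic circumference is at least $2t+r$. So the plan is to prove the main theorem. I would run the argument through the Szemer\'edi Regularity Lemma. Fix constants $\epsilon\ll d\ll 1$, apply the regularity lemma to the red graph $R_G$ (so that the blue graph $B_G$ is $\epsilon$-regular on the same partition), and pass to the reduced cluster graph $\mc H$ on $k$ clusters; the hypothesis $\delta(G)\ge (3n-1)/4$ forces $\delta(\mc H)\ge (3/4-\epsilon')k$. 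Color each edge of $\mc H$ by a color in which the corresponding cluster pair is $\epsilon$-regular of density at least $d$ (breaking ties arbitrarily), so $\mc H$ becomes a $2$-edge-colored graph of minimum degree essentially $3k/4$.

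The combinatorial engine is \L uczak's connected-matching method: a monochromatic \emph{connected matching} of $\mc H$ --- a matching all of whose edges lie in a single monochromatic component --- that covers a $\beta$-fraction of the clusters can be blown up, inside $G$, to monochromatic cycles of a long interval of lengths up to $(\beta-o(1))n$ in that color, by routing each cluster pair as a sub-path of controlled length and stitching the pieces together through the connectivity of the component. Hence the crux is the purely extremal statement: in every $2$-edge-coloring of a graph $\mc H$ on $k$ vertices with $\delta(\mc H)\ge(3/4-\epsilon')k$, \emph{either} some color has a connected matching covering at least $(2/3+\gamma)k$ clusters for an absolute constant $\gamma>0$, \emph{or} $\mc H$ together with its coloring differs in at most $\gamma' k^{2}$ edges from one of a short explicit list of extremal configurations. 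In the first case the blow-up already produces a monochromatic cycle of length at least $(2/3+\gamma-o(1))n>2t+r$ --- and, tracking whether the relevant color is essentially bipartite, the full length spectrum of one of the two alternatives --- which is more than enough.

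The delicate case is the stable one, where the $\epsilon n$ slack from regularity is no longer affordable and we must argue directly in $G$, now using $\delta(G)\ge(3n-1)/4$ \emph{exactly}. The extremal colorings are essentially: red is (a spanning subgraph of) a disjoint union of two cliques of order about $n/2$ while blue is (a spanning subgraph of) the complete bipartite graph between them --- this is the coloring capping monochromatic circumference near $2n/3$ --- together with its color-swapped and slightly unbalanced variants. The minimum-degree bound pins the two sides to be nearly balanced and nearly complete and keeps the bipartite block dense and balanced; one then extracts the required cycles with standard tools --- Chv\'atal--Erd\H{o}s or Bondy-type Hamiltonicity of an almost-complete graph and bipancyclicity of a dense balanced bipartite graph, together with P\'osa rotation--extension and a small absorbing set to hit the \emph{exact} target lengths. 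Whether the red/blue partition of $V(G)$ is ``clean'' or has a few mixed vertices decides whether the surviving rich monochromatic structure is non-bipartite, yielding cycles of all lengths up to $2t+r$, or genuinely bipartite, yielding cycles of all even lengths up to $2t+2$; this is the origin of the two alternatives.

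The main obstacle is precisely the stability analysis: (1) establishing the connected-matching dichotomy with sharp error terms --- this is where the threshold $3/4$ and the value $2/3$ come from, and it must be tight enough that ``not near-extremal'' genuinely buys a linear surplus in cycle length --- and (2) classifying \emph{all} near-extremal $2$-colorings of an $n$-vertex graph of minimum degree just below $3n/4$ whose longest monochromatic cycle is close to $2n/3$, and in each of them producing cycles of \emph{exactly} the claimed lengths; losing even a constant factor $\epsilon n$ anywhere would only reprove the asymptotic statement of Theorem~\ref{benevides}. A further technical nuisance is carrying the exact minimum-degree hypothesis faithfully through the regularity reduction, which usually forces one to treat a thin band of almost-extremal colorings by hand rather than through the regularity machine.
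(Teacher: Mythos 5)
Your first paragraph is exactly the paper's proof of this statement: the conjecture (for large $n$, and even with the weaker hypothesis $\delta(G)\ge(3n-1)/4$) follows in two lines from Theorem~\ref{maintheorem2}, since each alternative supplies a monochromatic cycle of length at least $2t+r$. The rest of your proposal, however, is an outline of Theorem~\ref{maintheorem2} whose hard part is only named, not done, and the part you do describe misidentifies the extremal structures. The coloring you single out --- red a union of two cliques of order about $n/2$, blue the complete bipartite graph between them --- does \emph{not} cap the monochromatic circumference near $2n/3$: that blue bipartite graph contains cycles of length close to $n$. The tight two-part configuration has parts of sizes $2t+r$ and $t$ (Example~1), and there is a second, genuinely different extremal pattern on \emph{four} parts of size about $n/4$ with red absent between $\mc U_1\cup\mc U_2$ and $\mc U_3\cup\mc U_4$ and blue absent between $\mc U_1\cup\mc U_3$ and $\mc U_2\cup\mc U_4$ (Example~2), which your sketch never mentions. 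Consequently your ``stable case'' analysis is aimed at the wrong targets, and the case that consumes most of the paper's effort (Lemma~\ref{four-parts-2}: finding matchings of size $3$ across the split, reabsorbing the exceptional set $V_0$, and the analysis of the crossing vertices $X_R,X_B$ under the exact bound $\delta(G)\ge(3n-1)/4$) is absent. Your appeal to ``Chv\'atal--Erd\H{o}s or Bondy-type Hamiltonicity, P\'osa rotation and an absorbing set'' does not bridge this: in the four-part case one color is essentially bipartite with nearly empty crossing pairs, and producing a cycle of \emph{exactly} each even length up to $2t+2$ requires stitching paths of prescribed parity through a handful of special vertices, which is the actual content of Section~\ref{four-part-section-2}.

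Two further calibration points. First, the dichotomy you list as obstacle~(1) is not reproved in the paper; it is imported as Lemma~4.1 of~\cite{BLSSW} (Lemma~\ref{stability-lemma}), with only a short extension (Section~\ref{lemma-extension}) to reduced graphs in which an edge may carry both colors --- so that part of your plan is available off the shelf, but its three outcomes (large monochromatic connected matching; a set $S$ of order about $\frac23 k$ on which one color is sparse; the four-part partition) are what dictate the case split, not a classification of near-extremal colorings that you would have to produce yourself. Second, even in the non-extremal case the paper must supplement the blow-up, which only yields cycles of length at least linear in $n/k$, by Theorem~\ref{bondys} to get the short even cycles; and the origin of the ``all lengths up to $2t+r$'' versus ``all even lengths up to $2t+2$'' alternatives is the case analysis in Lemmas~\ref{sparse-set-2} and~\ref{four-parts-2} (see also Example~3, which shows that for $r=1$ neither single alternative can be guaranteed), not merely whether the partition is ``clean.'' As it stands, your proposal correctly reduces the conjecture to the main theorem but leaves the main theorem's decisive extremal analysis as an acknowledged gap, with an incorrect picture of the configurations that analysis must handle.
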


\section{Results}

Our main result is the following theorem in the direction outlined by Schelp and in
the spirit of Theorems~\ref{12} and~\ref{benevides}.

\begin{theorem}\label{maintheorem2}
There exists a positive integer $n_0$ with the following property. Let $n = 3t+r>n_0$, where $r \in \{0,1,2\}$. Let $G$ be a graph of order $n $ with 
$\delta(G) \ge (3n-1)/4$. 
 Then for every  $2$-edge-coloring of $G$,  either there are cycles of every length in $\{3, 4, 5, \dots, 2t+r\}$ of the same  color, or 
there are cycles of every even length in $\{4, 6, 8, \dots, 2t+2\}$ of the same color.
\end{theorem}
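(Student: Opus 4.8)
The plan is a regularity-and-stability argument. Fix constants $0<\epsilon\ll d\ll\gamma\ll 1$ and apply the Szemer\'edi Regularity Lemma to the $2$-edge-colored graph $G$: we obtain a partition $V_0\cup V_1\cup\dots\cup V_k$ with $|V_0|\le\epsilon n$, $|V_1|=\dots=|V_k|=L=(1-o(1))n/k$, and all pairs $\epsilon$-regular in each color. Build the reduced graph $\mathcal R$ on $[k]$ by joining $i$ to $j$ when $(V_i,V_j)$ has density at least $d$ in $G$, and color $ij$ red if the red density of $(V_i,V_j)$ is at least $d/3$ and blue otherwise (breaking ties arbitrarily). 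Averaging the degree condition over the vertices of each $V_i$ gives $\delta(\mathcal R)\ge(3/4-\epsilon-d)k\ge(3/4-\gamma)k$. We use the standard embedding lemma: if $\mathcal M$ is a red connected matching of $\mathcal R$ whose component $D$ spans a cluster-set $W$, then $G$ contains red cycles of every even length in $[4,(1-\epsilon')|W|L]$, and if in addition $D$ contains an odd cycle then $G$ contains red cycles of every length in $[3,(1-\epsilon')|W|L]$ --- an odd closed walk through $D$ lets one flip the parity of the embedded cycle. Since $kL=(1-o(1))n$, it thus suffices to produce, in one color of $\mathcal R$, a connected matching covering a cluster-set $W$ with $|W|\ge(2/3+\gamma/2)k$ whose component is either non-bipartite (aiming at the first alternative) or bipartite (aiming at the second).

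The crux is a stability statement for monochromatic connected matchings, in the spirit of \L uczak's connected-matching method: every $2$-edge-colored graph $H$ on $m$ vertices with $\delta(H)\ge(3/4-\gamma)m$ satisfies either \emph{(a)} some color contains a connected matching covering at least $(2/3+\gamma)m$ vertices, or \emph{(b)} after editing at most $\gamma m^2$ edges, the coloring of $H$ becomes a blow-up of one of finitely many extremal colorings --- each of the shape: a vertex partition $V(H)=X\cup Y$ in which one color induces cliques on $X$ and on $Y$ and the other induces a complete bipartite graph between $X$ and $Y$, with $|X|=(1/3\pm o(1))m$ (possibly inside a sparser ambient graph still meeting the degree bound), together with the colorings obtained by swapping red and blue. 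Apply this to $\mathcal R$. In case (a) the embedding lemma already finishes: if the relevant component has an odd cycle we get monochromatic cycles of all lengths up to $(2/3+\gamma-o(1))n>2t+r$, giving the first alternative; if it is bipartite we get all even lengths up to $(2/3+\gamma-o(1))n>2t+2$, giving the second.

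This leaves case (b), which is where the exact bounds --- and essentially all of the work --- reside. One first de-regularizes: the closeness of the coloring of $\mathcal R$ to a pattern, transferred back through the regularity partition, forces a partition $V(G)=X^*\cup Y^*$ of the same shape in which all but $O(\gamma)n^2$ edges are colored as in the pattern. Now the sharp hypothesis $\delta(G)\ge(3n-1)/4$ produces the precise conclusion. Suppose the ``two cliques'' color is red. Each $v\in Y^*$ has at most $|X^*|+o(n)$ blue neighbors, hence at least $(3n-1)/4-|X^*|-o(n)$ red neighbors, almost all inside $Y^*$; deleting the $O(\gamma)n^2$ exceptional edges leaves a red graph on $Y^*$ with minimum degree comfortably above $|Y^*|/2$, hence Hamiltonian and pancyclic by Dirac's and Bondy's theorems, while the blue graph between $X^*$ and $Y^*$ is near-complete bipartite and bipancyclic up to length $2\min(|X^*|,|Y^*|)$ by the Moon--Moser theorem and its bipancyclic strengthening. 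The remaining, purely numerical, task is to verify --- using only $|X^*|+|Y^*|=n=3t+r$ and $\delta(G)\ge(3n-1)/4$ --- that for every admissible $|X^*|$ one has $|Y^*|\ge 2t+r$ (so red is pancyclic through length $2t+r$: first alternative) or $2\min(|X^*|,|Y^*|)\ge 2t+2$ (so blue is bipancyclic through length $2t+2$: second alternative), with the two ranges overlapping so that no value of $|X^*|$ escapes; the color-swapped patterns and the sparser ambient graphs are dealt with the same way. Carrying out this bookkeeping --- enumerating the extremal patterns, performing the de-regularization cleanly, and checking that ``$\gamma$-close plus the sharp degree bound'' already forces the \emph{exact} spectrum of cycle lengths, not merely a length within $o(n)$ of the target --- is the main obstacle; the generic case and the reductions above are comparatively soft.
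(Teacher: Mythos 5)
Your overall architecture (regularity, reduced graph, connected matchings \`a la \L uczak, stability, then exact analysis of near-extremal colorings) is the same as the paper's, but the stability statement you posit in case (b) is wrong as stated, and the omission is exactly the hardest part of the proof. You classify the near-extremal colorings as blow-ups of a single two-part pattern: $V=X\cup Y$, $|X|=(1/3\pm o(1))m$, one color inducing cliques on $X$ and $Y$, the other complete bipartite between them. This misses the four-partite extremal configuration of the paper's Example~2: a partition into $U_1,U_2,U_3,U_4$ of sizes $\approx n/4$ with the edges $U_1$--$U_4$ and $U_2$--$U_3$ absent, red being (essentially) the union of the complete bipartite graphs $[U_1,U_3]$ and $[U_2,U_4]$ and blue the union of $[U_1,U_2]$ and $[U_3,U_4]$. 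In that coloring each monochromatic component covers only about $n/2$ vertices, so your case (a) (a connected matching on $(2/3+\gamma)k$ clusters) fails, and the coloring is nowhere near a blow-up of your two-part pattern, so your case (b) fails too; hence the dichotomy you rely on is false. The correct stability result (Lemma~4.1 of Benevides--\L uczak--Scott--Skokan--White, which the paper uses) has a third outcome corresponding precisely to this four-part structure, and handling it occupies the paper's entire Section~6: one must use the sharp bound $\delta(G)\ge(3n-1)/4$ to force at least three vertices outside the four parts, analyze how they attach in each color (the $X_R,X_B$ case analysis), and stitch the two monochromatic bipartite ``halves'' together to get even cycles of every length up to exactly $2t+2$; this is also where Examples~2 and~3 show both the degree bound and the ``even cycles only'' alternative are unavoidable.

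Even within the two-part pattern you do treat, the argument is only sketched at the point where exactness matters: you assert that deleting $O(\gamma)n^2$ exceptional edges leaves a red graph on $Y^*$ with minimum degree above $|Y^*|/2$, but an edge-editing bound of $O(\gamma)n^2$ gives no per-vertex degree control, and $|Y^*|$ need not reach $2t+r$; the paper's corresponding Section~5 instead works with a maximum-degree hypothesis ($\Delta(R[L])\le 11\delta n$ on a set $L$ of size only $(2/3-\delta)n$), augments $L$ by carefully chosen outside vertices and applies Chv\'atal's theorem to hit each exact length up to $2t+r$, and in the complementary case uses Jackson's theorem on an unbalanced bipartite graph to get every even length up to $2t+2$. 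So beyond the missing four-part case, the de-regularization and exact bookkeeping you defer are themselves where genuinely different tools are needed, not just routine verification.
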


The following  examples show that our result is best possible.

\textbf{Example 1:} Let $G$ be the complete graph on $3t+r$ vertices, where $t$ is a positive integer and $r \in \{0,1,2\}$. We partition the vertex set of $G$ into $U_1$ and $U_2$ such that $|U_1| = 2t+r$ and $|U_2| = t$. Color all edges inside $U_1$ and $U_2$ blue and all edges in $G[U_1, U_2]$ red. There is a blue cycle of length $2t+r$ but no monochromatic cycle of length larger  than $2t+r$.

\textbf{Example 2:} Let $G$ be a graph on $n = 3t+r$ vertices, where $t$ is a positive integer and $r \in \{0,1,2\}$. We partition $V(G)$ into $U_1 \cup U_2 \cup U_3 \cup U_4 \cup \{x,y\}$, where $\lfloor \frac{n-2}{4} \rfloor \le |U_1| \le |U_2| \le |U_3| \le |U_4| \le \lceil \frac{n-2}{4} \rceil$. We obtain $G$ from $K_{3t+r}$ by deleting all edges between $U_1$ and $U_4$, and all edges between $U_2$ and $U_3$. Color all edges in $G[U_1, U_2]$ and $G[U_3, U_4]$ blue, all edges in $G[U_1, U_3]$ and $G[U_2, U_4]$ red, all edges incident with $x$ red and all edges incident with $y$ apart from the edge $xy$ blue; edges in $G[U_1], G[U_2], G[U_3]$, and $G[U_4]$ may be colored arbitrarily. The minimum degree in $G$ is 
\[
	n-1-\left\lceil \frac{n-2}{4} \right\rceil =\left\lfloor\frac{3n-2}{4}\right\rfloor,
\]
and a longest monochromatic cycle has length at most $2 \lceil \frac{n-2}{4} \rceil +1$, which is strictly less than $2t+r$ for $n \ge 8$.

\textbf{Example 3:} Let $G$ be a complete graph with $n = 3t+1$ vertices, where $t$ is a positive integer. We partition the vertex set of $G$ into $U_1$ and $U_2$ such that $|U_1| = \lfloor \frac n2 \rfloor$ and $|U_2| = \lceil \frac n2 \rceil$. Color all edges inside $U_1$ and $U_2$ blue and all edges in $G[U_1, U_2]$ red. Although $G$ has a red cycle of length $2 \lfloor \frac n2\rfloor$, there is no monochromatic cycle of length exactly $2t+1$, since the red graph is bipartite and the largest component in the blue graph only contains $\lceil \frac{n}{2} \rceil < 2t+1$ vertices. In particular, there is no monochromatic cycle of length exactly $2t+1$ in $G$.

Thus, the conditions of Theorem~\ref{maintheorem2} for $r=1$ can neither guarantee a monochromatic cycle of length $2t+1$ in $G$ nor a monochromatic cycle of length $2t+2$. However,  they imply that $G$ has a monochromatic cycle of at least one of these lengths, and of many other lengths.

Since $2t+2 \ge 2t+r$, Theorem~\ref{maintheorem2} immediately yields a  slightly stronger version of  Conjecture~\ref{BLSSWconj} 
(with restriction $\delta(G) \ge (3n-1)/4$ in place of $\delta(G) \ge 3n/4$) for  every sufficiently large $n$:

\begin{theorem}\label{maintheorem}
	\label{cycle-theorem}
	There exists a positive integer $n_0$ with the following property.
	Let $G$ be a graph of order $n > n_0$ with $\delta(G) \ge (3n-1)/4$. Let $n = 3t+r$, where $r \in \{0,1,2\}$. Then every $2$-edge-coloring of $G$ contains a monochromatic cycle of length at least $2t+r$.
\end{theorem}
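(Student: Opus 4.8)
The plan is to deduce Theorem~\ref{cycle-theorem} directly from Theorem~\ref{maintheorem2}, so that essentially no new argument is required. Take $n_0$ to be the constant supplied by Theorem~\ref{maintheorem2}, enlarging it if necessary so that $n > n_0$ forces $2t + r \ge 3$ (any $n_0 \ge 8$ works, since then $t \ge 2$). Now fix an $n$-vertex graph $G$ with $\delta(G) \ge (3n-1)/4$, write $n = 3t+r$ with $r \in \{0,1,2\}$, and fix an arbitrary $2$-edge-coloring of $G$.

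By Theorem~\ref{maintheorem2}, one of two alternatives holds. In the first, some color class contains cycles of every length in $\{3, 4, \dots, 2t+r\}$; since $2t+r \ge 3$, that class in particular contains a monochromatic cycle of length exactly $2t+r$. In the second, some color class contains cycles of every even length in $\{4, 6, \dots, 2t+2\}$; since $r \le 2$ we have $2t+2 \ge 2t+r \ge 3$, so that class contains a monochromatic cycle of length $2t+2 \ge 2t+r$. In either case $G$ has a monochromatic cycle of length at least $2t+r$, which is exactly the claim. The only point to check is the trivial numerical inequality $2t+2 \ge 2t+r$ together with $2t+r \ge 3$; there is no genuine obstacle, as the entire difficulty has been absorbed into Theorem~\ref{maintheorem2}.

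For completeness I note the alternative of proving Theorem~\ref{cycle-theorem} directly, without the sharper pancyclicity conclusions of Theorem~\ref{maintheorem2}. The natural route is \L uczak's connected-matchings method: apply the Regularity Lemma to the majority color of $G$, pass to a reduced graph whose minimum degree is still roughly $3/4$ of its order, and argue that a long connected matching in one of the two colors of the reduced graph yields a long monochromatic cycle in $G$; the extremal colorings of Examples~1--3 then have to be recovered by a stability analysis of near-extremal configurations. That stability step — deciding which of the two alternatives of Theorem~\ref{maintheorem2} is operative near the extremal examples — would be the main obstacle. Since Theorem~\ref{maintheorem2} is established in full, however, this detour is unnecessary and the corollary above suffices.
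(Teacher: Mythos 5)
Your deduction is correct and is exactly how the paper obtains this theorem: it notes that $2t+2 \ge 2t+r$ and that Theorem~\ref{maintheorem2} therefore immediately yields a monochromatic cycle of length at least $2t+r$ in either alternative. The extra remarks about a hypothetical direct proof are unnecessary but harmless.
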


Observe that although Conjecture~\ref{BLSSWconj} is stated for all $n$, it is not true for $n\in \{4,5\}$. Indeed, $E(K_4)$ decomposes into two 
Hamiltonian paths, and so the corresponding edge-coloring of $K_4$ does not have monochromatic cycle at all. Also, $E(K_5)$ decomposes into two
{\em bull graphs} (paths of length $4$ with the chord connecting the second and the fourth vertices); hence the corresponding edge-coloring of $K_5$ does not have 
a monochromatic cycle of length at least $4$.

Theorem~\ref{maintheorem} in turn implies Conjecture~\ref{schelp}:

\begin{theorem}\label{schelpT}
Suppose that $n$ is large enough and $G$ is a graph on $3n-1$ vertices with minimum degree at least $(3|V(G)|-1)/4$. 
Then $G$ arrows~$P_{2n}$.
\end{theorem}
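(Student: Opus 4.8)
\textbf{Proof proposal for Theorem~\ref{schelpT}.}

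The plan is to derive this directly from Theorem~\ref{maintheorem} (the cycle version), using the observation that a long monochromatic cycle contains a long monochromatic path, exactly in the spirit of how $R(P_{2n},P_{2n})=3n-1$ is read off from a circumference statement. First I would set $N=|V(G)|=3n-1$ and record the key arithmetic identity $3n-1=3(n-1)+2$, so that in the notation of Theorem~\ref{maintheorem} we have $t=n-1$ and $r=2$, whence $2t+r=2(n-1)+2=2n$. Next I would check that the hypotheses of Theorem~\ref{maintheorem} are met: the order $N=3n-1$ exceeds the threshold $n_0$ once $n$ is large enough, and the degree condition matches on the nose, since $\delta(G)\ge (3|V(G)|-1)/4=(3N-1)/4$ is precisely what Theorem~\ref{maintheorem} requires.

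Then, given any $2$-edge-coloring of $G$, Theorem~\ref{maintheorem} produces a monochromatic cycle $C$ of length $\ell\ge 2t+r=2n$. Finally I would note that any cycle of length $\ell\ge 2n$ contains $P_{2n}$ as a subgraph: listing the vertices of $C$ cyclically as $v_1v_2\cdots v_\ell v_1$, the vertices $v_1,\dots,v_{2n}$ are distinct and the edges $v_1v_2,v_2v_3,\dots,v_{2n-1}v_{2n}$ all lie on $C$, hence form a path on $2n$ vertices all of whose edges have the same color. Thus every $2$-edge-coloring of $G$ yields a monochromatic copy of $P_{2n}$, i.e.\ $G$ arrows $P_{2n}$.

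There is essentially no obstacle here beyond checking that the parameters line up; the content is entirely in Theorem~\ref{maintheorem}, and the only thing to be careful about is the bookkeeping $3n-1=3(n-1)+2$ so that the guaranteed monochromatic circumference is $2n$ rather than something smaller. (In particular the slightly relaxed degree bound $(3|V(G)|-1)/4$ in the statement is exactly the one inherited from Theorem~\ref{maintheorem}, which is why the conclusion is stated with that bound rather than the $3|V(G)|/4$ of Schelp's original Conjecture~\ref{schelp}.)
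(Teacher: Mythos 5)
Your proposal is correct and matches the paper's own proof: both set $t=n-1$, $r=2$ so that $3n-1=3t+r$ and $2t+r=2n$, apply Theorem~\ref{maintheorem} to obtain a monochromatic cycle of length at least $2n$, and extract $P_{2n}$ from it. The extra detail you give (verifying the degree hypothesis and explicitly extracting the path from the cycle) is just an expanded version of the same argument.
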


\begin{proof}
We have $3n-1 = 3t+r$ for $t = n-1$ and $r=2$. Theorem~\ref{cycle-theorem} yields that $G$ has a monochromatic cycle of length at least $2t+r = 2n$, so in particular $G$ contains a monochromatic $P_{2n}$.
\end{proof}

 Gy\'{a}rf\'{a}s and  S\'{a}rk\"{o}zy~\cite{gyarfas12} suggested that maybe the claim in Conjecture~\ref{schelp} holds for all $n$.
Theorem~\ref{maintheorem2} is also a (small) step toward a resolution of Question~\ref{LNSq}.




Our proof of Theorem~\ref{maintheorem2} uses the Szemer\' edi Regularity Lemma~\cite{Sz}, the idea of connected matchings in regular partitions due to \L uczak~\cite{L1}, a stability theorem of Benevides, \L uczak, Scott, Skokan and White (Lemma~4.1 in~\cite{BLSSW}, see Lemma~\ref{stability-lemma} in Section~4 below),
and several classical theorems on existence of cycles in graphs, including theorems of Berge~\cite{BGG} and Jackson~\cite{jackson81}. 

We first apply the $2$-color version of the Szemer\'edi Regularity Lemma to $G$ to obtain a reduced graph $H$. Then we apply Lemma~4.1 to obtain three cases. In Case~(i) of Lemma~4.1, it is already shown in~\cite{BLSSW} that there is a long monochromatic cycle, and some additional work yields the conclusions of Theorem~\ref{maintheorem2} as well. The remaining two cases describe near-extremal graphs, which we handle separately: we deal with Case~(ii) in Section~\ref{sparse-set-section-2} and with Case~(iii) in Section~\ref{four-part-section-2}.

\section{Tools}

\subsection{The Regularity Lemma}

{For the sake of consistency, we use the same form of $2$-colored version of the Szemer\'edi Regularity Lemma as in~\cite{BLSSW}, which can be deduced from the standard form of Szemer\'edi Regularity Lemma~\cite{Sz}; the definitions and theorems given there are reproduced below.}

\begin{defn}Let $G$ be a graph and $X$ and $Y$ be disjoint subsets of $V(G)$. The \emph{density} of the pair $(X,Y)$ is the value
\[
	d(X,Y) := \frac{e(X,Y)}{|X||Y|}.
\]
Let $\epsilon > 0$ and  $G$ be a graph and $X$ and $Y$ be disjoint subsets of $V(G)$. We call $(X,Y)$ an \emph{$\epsilon$-regular pair for $G$} if, for all $X' \subseteq X$ and $Y' \subseteq Y$ satisfying $|X'| \ge \epsilon |X|$ and $|Y'| \ge \epsilon |Y|$, we have
\[
	|d(X,Y) - d(X', Y')| < \epsilon.
\]
\end{defn}

\begin{theorem}[Theorem 2.4 in \cite{BLSSW}]
\label{regularity-lemma}
For every $\epsilon>0$ and positive integer $k_0$, there is an $M = M(\epsilon, k_0)$ such that if $G = (V,E)$ is an arbitrary $2$-edge-colored graph and $d \in [0,1]$, then there is $k_0 \le k \le M$, a partition $(V_i)_{i=0}^k$ of the vertex set $V$ and a subgraph $G' \subseteq G$ with the following properties:
\begin{enumerate}
\item [(R1)] $|V_0| \le \epsilon|V|$, 
\item [(R2)] all clusters $V_i$, $i \in [k] := \{1,2,\dots,k\}$, are of the same size $m \le \lceil \epsilon|V|\rceil$, 
\item [(R3)] $\deg_{G'}(v) > \deg_G(v) - (2d+\epsilon)|V|$ for all $v \in V$, 
\item [(R4)] $e(G'[V_i]) = 0$ for all $i \in [k]$,
\item [(R5)] for all $1 \le i < j \le k$, the pair $(V_i, V_j)$ is $\epsilon$-regular for $R_{G'}$ with a density either $0$ or greater than $d$ and $\epsilon$-regular for $B_{G'}$ with a density either $0$ or greater than $d$, where $E(G') = E(R_{G'}) \cup E(B_{G'})$ is the inherited $2$-edge-coloring of $G'$.
\end{enumerate}	
\end{theorem}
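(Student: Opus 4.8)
The plan is to deduce this statement from the original Szemer\'edi Regularity Lemma~\cite{Sz} in three stages: first produce one partition that is $\epsilon_1$-regular, for a tiny auxiliary parameter $\epsilon_1\ll\epsilon$, simultaneously with respect to the red graph $R$ and the blue graph $B$; then clean it up by absorbing a few ``defective'' clusters and ``atypical'' vertices into the exceptional part and re-equalizing the cluster sizes; and finally define $G'$ by deleting the edges inside clusters, the edges spanning irregular pairs, and, in each surviving regular pair, all edges of whichever of the two colors has density at most $d$. For the setup I would fix $\epsilon_1=\epsilon_1(\epsilon)>0$ small enough (say $\epsilon_1\le\epsilon^2/36$), put $m_0:=\max\{2k_0,\lceil 1/\epsilon_1\rceil\}$, and apply the original Regularity Lemma with parameters $\epsilon_1$ and $m_0$. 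That the output partition may be chosen $\epsilon_1$-regular for $R$ and for $B$ at once is itself standard: it is the usual energy-increment argument run with the mean-square density summed over the two colors. This yields $W_0,W_1,\dots,W_\ell$ with $m_0\le\ell\le M_0(\epsilon_1,m_0)=:M$, with $|W_0|\le\epsilon_1|V|$, with all of $W_1,\dots,W_\ell$ of a common size $m\le\epsilon_1|V|$ (using $\ell\ge m_0\ge 1/\epsilon_1$), and with all but at most $\epsilon_1\ell^2$ of the pairs $(W_i,W_j)$ being $\epsilon_1$-regular for both colors.

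The heart of the argument is the cleanup. Call $W_i$ \emph{defective} if it lies in more than $\sqrt{\epsilon_1}\,\ell$ pairs that fail to be $\epsilon_1$-regular for $R$ or for $B$; a double count gives at most $2\sqrt{\epsilon_1}\,\ell$ defective clusters. Inside a non-defective $W_i$, call $v$ \emph{atypical} if there are more than $\sqrt{\epsilon_1}\,\ell$ indices $j$ for which $(W_i,W_j)$ is $\epsilon_1$-regular in some color $c$ while the number of color-$c$ neighbors of $v$ in $W_j$ deviates from $d_c(W_i,W_j)\,m$ by at least $\epsilon_1 m$; since $\epsilon_1$-regularity caps the number of such deviating vertices on the $W_i$-side of a single pair by $\epsilon_1 m$, every non-defective cluster has at most $2\sqrt{\epsilon_1}\,m$ atypical vertices. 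Now discard all defective clusters and all atypical vertices, delete a few further vertices from each surviving cluster so that they all share one size $m'\in[(1-2\sqrt{\epsilon_1})m,\,m]$, set $V_0$ to be $W_0$ together with everything discarded, and let $V_1,\dots,V_k$ be the surviving shrunk clusters. Then $k\ge(1-2\sqrt{\epsilon_1})\ell\ge k_0$ by the choice of $m_0$, $k\le M$, $m'\le\epsilon_1|V|\le\lceil\epsilon|V|\rceil$, and $|V_0|\le(\epsilon_1+4\sqrt{\epsilon_1})|V|\le\epsilon|V|$ for $\epsilon_1$ small, giving (R1), (R2), and the bounds on $k$. Each $(V_i,V_j)$ arises from an $\epsilon_1$-regular pair by passing to subsets of relative size at least $1-2\sqrt{\epsilon_1}$ on each side, hence remains $\epsilon$-regular in each color by the standard robustness of $\epsilon$-regularity under deletion of a small fraction of vertices.

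Finally, let $G'$ keep every edge of $G$ incident to $V_0$ together with, for each pair $1\le i<j\le k$ that is $\epsilon$-regular for both colors, all red edges between $V_i$ and $V_j$ when $d_R(V_i,V_j)>d$ and all blue edges between them when $d_B(V_i,V_j)>d$, and nothing else. No edge then lies inside a cluster, so (R4) holds. Each pair $(V_i,V_j)$ with $1\le i<j\le k$ has, in each color, density $0$ or density exceeding $d$, and is $\epsilon$-regular in that color — trivially whenever that density is $0$ (this includes every irregular pair and every low-density pair, where all edges of the color were deleted), and otherwise because no edge of that color between those two clusters was removed — so (R5) holds. For (R3), a vertex of $V_0$ loses nothing, while a vertex $v\in V_i$ with $i\ge1$ loses only its fewer than $\epsilon_1|V|$ edges inside $V_i$, its fewer than $\sqrt{\epsilon_1}|V|$ edges into the at most $\sqrt{\epsilon_1}\,\ell$ clusters on which the pair with $V_i$ is not $\epsilon$-regular for both colors, and, in each color, its edges into the low-density clusters; for the last, since $v$ is not atypical and $|d_c(V_i,V_j)-d_c(W_i,W_j)|<\epsilon_1$, in all but at most $\sqrt{\epsilon_1}\,\ell$ of those clusters $v$ has fewer than $(d+2\epsilon_1)m$ neighbors of that color there, so the color-$c$ loss is at most $(d+2\epsilon_1+\sqrt{\epsilon_1})|V|$. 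Summing over the two colors and adding the previous terms gives $\deg_G(v)-\deg_{G'}(v)<(2d+\epsilon)|V|$ once $\epsilon_1$ is small in terms of $\epsilon$, which is (R3).

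The one genuinely delicate point is this cleanup step. The naive choice of $G'$ — delete only the edges of irregular pairs and of low-density pairs — fails (R3) at vertices lying in many irregular pairs and at vertices atypical for many pairs, since each could then lose up to roughly $|V|$ edges; the remedy is to absorb those few offending clusters and vertices into $V_0$ and re-equalize the cluster sizes, and the work is in checking that doing so simultaneously keeps $|V_0|\le\epsilon|V|$, keeps $k\ge k_0$, and keeps every surviving pair $\epsilon$-regular. All of these constraints can be met at once precisely because of the slack obtained by taking $\epsilon_1$ quadratically small in $\epsilon$.
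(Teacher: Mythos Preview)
The paper does not prove this theorem: it is quoted verbatim as Theorem~2.4 of~\cite{BLSSW} with the remark that it ``can be deduced from the standard form of Szemer\'edi Regularity Lemma~\cite{Sz}.'' So there is no in-paper argument to compare against. Your derivation is the standard one and is essentially correct; what you wrote is precisely the usual three-stage proof of the degree form (multicolor regularity with a much finer parameter $\epsilon_1$, cleanup of bad clusters and atypical vertices into $V_0$ with re-equalization, then edge deletion to form $G'$).

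Two small slips worth tightening. First, the sentence ``Each $(V_i,V_j)$ arises from an $\epsilon_1$-regular pair'' is not literally true: two non-defective clusters $W_i,W_j$ can still form an irregular pair, so some $(V_i,V_j)$ do not inherit regularity. You handle this correctly two paragraphs later (by deleting all edges in such pairs and noting that a density-$0$ pair is trivially $\epsilon$-regular, and by charging the lost edges to the at most $\sqrt{\epsilon_1}\,\ell$ irregular pairs meeting a non-defective $V_i$), but the earlier sentence should be qualified accordingly. Second, a couple of constants are slightly off: for a fixed $\epsilon_1$-regular pair and a fixed color, up to $2\epsilon_1 m$ vertices can deviate (both tails), and summing over two colors gives a bound of $4\sqrt{\epsilon_1}\,m$ atypical vertices per cluster rather than $2\sqrt{\epsilon_1}\,m$; similarly the bound on $|V_0|$ picks up an extra factor. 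None of this matters, since you have the slack $\epsilon_1\le\epsilon^2/36$ (and could take $\epsilon_1$ still smaller) --- just adjust the numerical claims.
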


\begin{defn}
Given a graph $G = (V,E)$ and a partition $(V_i)_{i=0}^k$ of $V$ satisfying conditions (R1)--(R5) above, we define the \emph{$(\epsilon, d)$-reduced $2$-edge-colored graph $H$} on vertex set $\{v_i : 1 \le i \le k\}$ as follows. For $1 \le i < j \le k$, 
\begin{itemize}
\item let $v_iv_j$ be a blue edge of $H$ when $B_{G'}[V_i, V_j]$ has density at least $d$;
\item let $v_iv_j$ be a red edge of $H$ when $R_{G'}[V_i, V_j]$ has density at least~$d$.
\end{itemize}
\end{defn}

Our definition of the reduced graph departs slightly from the definition of \cite{BLSSW}: we let an edge $v_iv_j$ of $H$ have both red and blue colors  when $B_{G'}[V_i, V_j]$ and $R_{G'}[V_i, V_j]$ both are $\epsilon$-regular and have density at least $d$, while in such cases, it is only a blue edge in~\cite{BLSSW}. We can also view this reduced graph as a red graph $H_R$ and a blue graph $H_B$ whose edge sets are not necessarily disjoint; this observation is especially helpful in Section~\ref{lemma-extension}.

\subsection{Extremal results for matchings, paths, and cycles.}
We will use the following extremal results to find cycles of desired length in Section~\ref{sparse-set-section-2} and Section~\ref{four-part-section-2}.

Theorems~\ref{bagga}, \ref{berge}, \ref{bondy}, \ref{chvatal}, and \ref{jackson} will all be used to show the existence of cycles in the reduced graph. The variety is necessary for handling differently structured graphs, and in particular, Theorem~\ref{jackson} is one of only a few such results for unbalanced bipartite graphs. Theorem~\ref{bondys} will be used to find cycles too short to be found using the Szemer\'edi Regularity Lemma.

\begin{theorem}[Bagga and Varma~\cite{BV}]\label{bagga}
Let $G$ be a bipartite balanced graph of order $2n$ such that the sum of the degrees of any  two non-adjacent vertices from different parts is at least $n+1$. Then $G$ is bipancyclic.
\end{theorem}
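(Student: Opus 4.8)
The plan is to argue in two stages: first that $G$ has a Hamiltonian cycle, and then, using such a cycle, that $G$ has a cycle of every even length in $[4,2n]$. Write $A,B$ for the parts of $G$, so $|A|=|B|=n$. If $G=K_{n,n}$ the conclusion is immediate, so I assume $G\neq K_{n,n}$; a direct check disposes of the finitely many cases with small $n$, so I also assume $n$ is not tiny.

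\emph{Stage 1 (Hamiltonicity).} I would use the classical Ore-type argument for balanced bipartite graphs. First, $G$ is connected, since a vertex of degree $0$ would be non-adjacent to all $n$ vertices of the other part and force each of them to have degree at least $n+1$, which is impossible. Suppose $G$ is not Hamiltonian and add edges one at a time while keeping it non-Hamiltonian, obtaining an edge-maximal non-Hamiltonian graph $G^{+}\supseteq G$ on the same vertex set (this process does not reach $K_{n,n}$, which is Hamiltonian); a pair non-adjacent in $G^{+}$ is non-adjacent in $G$ and has not lost degree, so $G^{+}$ still satisfies the degree-sum hypothesis. Since $G^{+}\neq K_{n,n}$ there is a non-adjacent pair $a\in A$, $b\in B$; by edge-maximality $G^{+}+ab$ has a Hamiltonian cycle, necessarily through $ab$, so $G^{+}$ has a Hamiltonian path $a=w_1w_2\cdots w_{2n}=b$. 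For each even $i$ with $w_1w_i\in E(G^{+})$ we must have $w_{i-1}w_{2n}\notin E(G^{+})$, as otherwise $w_1w_iw_{i+1}\cdots w_{2n}w_{i-1}w_{i-2}\cdots w_1$ would be a Hamiltonian cycle. The vertices $w_{i-1}$ arising this way are $\deg_{G^{+}}(a)$ distinct vertices of $A$, none adjacent to $b$, so $\deg_{G^{+}}(b)\le n-\deg_{G^{+}}(a)$, contradicting $\deg_{G^{+}}(a)+\deg_{G^{+}}(b)\ge n+1$. Hence $G$ is Hamiltonian; fix a Hamiltonian cycle $C=v_1v_2\cdots v_{2n}v_1$, with $v_i\in A$ exactly when $i$ is odd.

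\emph{Stage 2 (shortening).} Fix $k\in\{2,\dots,n-1\}$ and suppose, for contradiction, that $G$ has no cycle of length $2k$. Any chord $v_iv_j$ of $C$ creates two cycles whose lengths sum to $2n+2$; in particular, a chord joining two vertices at distance $2k-1$ along $C$ would produce a $2k$-cycle. Therefore $v_iv_{i+2k-1}\notin E(G)$ for every $i$ (indices mod $2n$), and since $2k-1$ is odd these are non-adjacent pairs from different parts. Applying the hypothesis to all $2n$ of them and summing gives $2\sum_{i=1}^{2n}\deg(v_i)\ge 2n(n+1)$, that is, $e(G)\ge n(n+1)/2$. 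Now I would derive a contradiction from this edge surplus together with the forbidden configurations forced by ``no $2k$-cycle.'' For $k=2$ this is quick: by convexity the number of paths of length two centred in $A$ is at least $n\binom{(n+1)/2}{2}$, which for $n\ge 4$ exceeds $\binom n2$, so two vertices of $B$ have two common neighbours and $G$ contains $C_4$. For general $k$, the surplus forces chords of $C$; combining such a chord with the known non-edges at distance $2k-1$, or with a second, suitably crossing chord, lets one reroute $C$ into a cycle of length exactly $2k$, with a short parity discussion handling the degenerate case $2k-1=n$.

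The step I expect to be the main obstacle is this last one: it is the bipartite analogue of Bondy's dichotomy that a Hamiltonian graph with more than $n^2/4$ edges is pancyclic unless it is $K_{n/2,n/2}$. Turning $e(G)\ge n(n+1)/2$ into an actual $2k$-cycle requires a careful analysis of how $G$ can fail to contain one cycle length while containing all longer ones, together with the near-extremal graphs (those close to $K_{n,n}$ or to a blow-up of a short cycle) where the argument is tightest; organizing this casework, rather than any single estimate, is where the real work lies.
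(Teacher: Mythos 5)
Your Stage 1 is sound: the edge-maximal counterexample together with the Hamiltonian-path rotation is the standard Moon--Moser argument, the degree-sum hypothesis is indeed inherited by $G^{+}$, and the contradiction $\deg(a)+\deg(b)\le n$ is correct. The bookkeeping in Stage 2 is also fine as far as it goes: if there is no $2k$-cycle then $v_iv_{i+2k-1}\notin E(G)$ for all $i$, these are non-adjacent pairs from different parts, and summing the hypothesis over them gives $e(G)\ge n(n+1)/2$; the $k=2$ case via the convexity count of paths of length two is correct for $n\ge 4$. The genuine gap is that for every remaining length, i.e.\ for $3\le k\le n-1$, no proof is given: you say the edge surplus ``forces chords'' and that combining a chord with the non-edges at distance $2k-1$, or with a second crossing chord, ``lets one reroute $C$ into a cycle of length exactly $2k$,'' but you do not exhibit the rerouting, and you explicitly acknowledge that organizing this casework is ``where the real work lies.'' That step is precisely the content of the theorem: it amounts to the bipartite analogue of Bondy's pancyclicity argument (a Hamiltonian balanced bipartite graph on $2n$ vertices with more than $n^2/2$ edges is bipancyclic), which is a nontrivial theorem in its own right, not a consequence of any single estimate; the interaction of a chord with $C$ only yields cycles whose lengths are constrained by where the chord's endpoints sit, and extracting the one prescribed length $2k$ --- especially in the near-extremal configurations you allude to --- is exactly what must be argued and is absent. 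So the proposal establishes Hamiltonicity and the existence of a $C_4$, but not bipancyclicity.

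For comparison: the paper does not prove this statement at all --- it is imported as a black-box tool from Bagga and Varma --- so there is no in-paper argument to measure your sketch against; judged on its own terms, the sketch is a plausible opening (Hamiltonicity plus an Ore-to-edge-count reduction) whose decisive step is missing. If you want to complete it along these lines, you would either have to reprove the edge-density bipancyclicity theorem for Hamiltonian bipartite graphs or cite it explicitly, at which point your Stage 2 reduces to the observation $e(G)\ge n(n+1)/2>n^2/2$.
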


\begin{theorem}[Berge~\cite{BGG}]\label{berge}
Let $H$ be a $2m$-vertex bipartite graph with vertices $u_1, u_2, \ldots, u_m$ on one side and $v_1, v_2, \ldots, v_m$ on the other, such that $\deg(u_1) \le \ldots \le \deg(u_m)$ and $\deg(v_1) \le \ldots \le \deg(v_m)$. Suppose that for the smallest two indices $i$ and $j$ such that $\deg(u_i) \le i+1$ and $\deg(v_j) \le j+1$, we have $\deg(u_i) + \deg(v_j) \ge m+2$. Then $H$ is Hamiltonian bi-connected: for every $i$ and $j$, there is a Hamiltonian path with endpoints $u_i$ and $v_j$.
\end{theorem}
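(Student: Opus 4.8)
The plan is to reduce the statement, for each fixed pair $(i,j)$, to the ordinary Hamiltonicity of an auxiliary balanced bipartite graph, and then to invoke (and prove) the bipartite analogue of Chv\'atal's degree-sequence theorem. Given $i$ and $j$, form $H^{+}$ from $H$ by adding a new vertex $x$ to the $v$-side whose only neighbor is $u_i$, a new vertex $y$ to the $u$-side whose only neighbor is $v_j$, and the edge $xy$. Then $H^{+}$ is a balanced bipartite graph on $2(m+1)$ vertices in which $\deg(x)=\deg(y)=2$, so any Hamiltonian cycle of $H^{+}$ must contain the path $u_i\,x\,y\,v_j$; deleting $x$ and $y$ from such a cycle produces a Hamiltonian path of $H$ from $u_i$ to $v_j$, and conversely any such Hamiltonian path of $H$ closes up through $v_j\,y\,x\,u_i$ to a Hamiltonian cycle of $H^{+}$. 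Hence it suffices to show that $H^{+}$ is Hamiltonian for every choice of $i,j$.

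The next step is to establish the following bipartite Chv\'atal-type sufficient condition: a balanced bipartite graph with parts $\{a_1,\dots,a_N\}$ and $\{b_1,\dots,b_N\}$, degrees nondecreasing in each part, is Hamiltonian provided that for the least indices $p$ and $q$ with $\deg(a_p)\le p$ and $\deg(b_q)\le q$ one has $\deg(a_p)+\deg(b_q)\ge N+1$ (vacuous if no such $p$ or $q$ exists). One proves this by the standard contradiction-plus-rotation argument. If there is a counterexample, pass to a spanning supergraph on the same parts that is edge-maximal subject to being non-Hamiltonian; this still satisfies the hypothesis, since adding cross-edges only raises degrees and Chv\'atal-type conditions are monotone in exactly this sense. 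Such a graph is not $K_{N,N}$, so choosing a nonadjacent pair $a\in A$, $b\in B$ we get that adding $ab$ makes it Hamiltonian, and a Hamiltonian cycle of the enlarged graph must use $ab$; deleting $ab$ gives a Hamiltonian path of our graph from $a$ to $b$. The usual crossover argument on the two endpoints of this path shows their neighborhoods cannot ``overlap along the path'' without creating a Hamiltonian cycle, whence $\deg(a)+\deg(b)\le N$; sorting the degrees then exhibits indices $p,q$ that violate the hypothesis, a contradiction. This is precisely the bipartite counterpart of Chv\'atal's proof, and the ``Hamiltonian bi-connected'' conclusion of the theorem is just this statement applied through the gadget.

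Finally, I would verify that $H^{+}$ satisfies the condition above with $N=m+1$, using the hypothesis on $H$. The degree sequence of the $u$-side of $H^{+}$ is that of $H$ with the single value $\deg(u_i)$ increased by $1$ and an extra value $2$ inserted (for $y$), and symmetrically on the $v$-side; one tracks how the threshold condition $\deg\le(\text{index})$ moves under this reindexing and checks, using that the hypothesis of the theorem has the slack $\ge m+2$ rather than $\ge m+1$ and the shifted thresholds $i+1,\,j+1$, that the required inequality $\deg(a_p)+\deg(b_q)\ge N+1=m+2$ holds for the relevant least indices. I expect this bookkeeping --- reconciling the $(i+1,j+1)$-type thresholds and the ``$+2$'' in Berge's condition with the two extra vertices introduced by the gadget --- to be the only genuinely delicate part of the argument; the rotation step and the gadget reduction are routine. (Alternatively, one can skip the gadget and run the rotation argument directly on Hamiltonian $u_i$--$v_j$ paths inside a graph edge-maximal with respect to having no such path, at the cost of a messier case analysis according to which of four candidate vertices become path endpoints.)
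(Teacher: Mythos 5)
The paper never proves this statement: it is quoted as a black-box tool of Berge, cited to Chapter~10 of~\cite{BGG} (the same chapter from which Theorem~\ref{chvatal} is taken), so there is no in-paper argument to compare yours against; your proposal has to be judged on its own. Your route is viable. The gadget reduction is correct (the degree-$2$ vertices force every Hamiltonian cycle of $H^{+}$ to contain the path $u_i\,x\,y\,v_j$, so such cycles are exactly the desired Hamiltonian $u_i$--$v_j$ paths of $H$ plus the gadget), and the bookkeeping you defer does close: from the hypothesis with thresholds $i+1,j+1$ and sum $m+2$ one first gets $\deg(u_1),\deg(v_1)\ge 2$ (otherwise the other side would need a vertex of degree $\ge m+1$), so the inserted degree-$2$ vertices never trigger at index $1$; then, writing $\alpha,\beta$ for the sorted sides of $H^{+}$ and $d,e$ for those of $H$, the order-statistic bounds $\alpha_l\ge d_{l-1}$ and $\beta_l\ge e_{l-1}$ (among the $l$ smallest entries at most one is the new vertex, and raising $\deg(u_i)$ by one only raises order statistics) transfer the hypothesis on $H$ into the condition $\alpha_k\le k \Rightarrow \beta_{m+1-k}\ge m+2-k$ for $1\le k\le m$, which is exactly what the Hamiltonicity criterion requires with $N=m+1$.

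The one genuine weak point is inside your proof of the bipartite Chv\'atal-type criterion. Knowing merely that \emph{some} nonadjacent pair $a\in A$, $b\in B$ of the edge-maximal counterexample has $\deg(a)+\deg(b)\le N$ does not ``exhibit indices $p,q$ that violate the hypothesis'': a single pair of small degree sum says nothing about the sorted degree sequences. As in Chv\'atal's original argument you must pick the nonadjacent pair with $\deg(a)+\deg(b)$ \emph{maximum}, and then exploit the crossover-forbidden vertices along the Hamiltonian $a$--$b$ path $w_1\cdots w_{2N}$: for each neighbor $w_{t+1}$ of $a$, the vertex $w_t\in A$ is nonadjacent to $b$ and hence has degree at most $\deg(a)$ by the extremal choice, producing $\deg(a)$ vertices of $A$ of degree $\le\deg(a)$, and symmetrically $\deg(b)$ vertices of $B$ of degree $\le\deg(b)$; only then do the least triggering indices satisfy $p\le\deg(a)$, $q\le\deg(b)$ and $\alpha_p+\beta_q\le\deg(a)+\deg(b)\le N$, contradicting the hypothesis. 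With that step made explicit (your monotonicity claim for passing to the edge-maximal supergraph is fine in the least-index formulation, since larger degrees can only move the least triggering index later and the value there up, and the trivial restriction $N\ge 2$ is harmless here), the proposal is a complete and correct proof of Berge's theorem.
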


\begin{theorem}[Bondy~\cite{bondy}]\label{bondy}
Let $G$ be a graph of order $n$ such that for every pair of non-adjacent vertices has their degree sum at least $n$. Then $G$ is either pancyclic or $G$ is the bipartite complete graph $K_{\lceil \frac{n}{2} \rceil, \lfloor \frac{n}{2} \rfloor}$.
\end{theorem}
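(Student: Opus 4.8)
The plan is to reduce the statement to a fact about Hamiltonian graphs with many edges, in the style of Bondy's classical argument. Two preliminaries come first. Since any two non-adjacent vertices have degree sum at least $n$, Ore's theorem applies and $G$ contains a Hamilton cycle, say $C = v_1v_2\cdots v_nv_1$. Also, a short degree count shows the hypothesis forces $e(G) \ge n^2/4$: if $G$ is complete this is clear, and otherwise, taking a vertex $u$ of minimum degree $d$, each of the $n-1-d$ non-neighbors of $u$ has degree at least $n-d$, so summing all degrees and minimizing the resulting quadratic in $d$ gives the bound; one should note when this is tight, as the exceptional graph lives at the extremal edge count. If in addition $G$ is triangle-free, then Mantel's theorem together with its equality case immediately forces $e(G)=n^2/4$, $n$ even, and $G=K_{n/2,n/2}$; so from now on I may assume $G$ contains a triangle, and the task reduces to showing that a Hamiltonian graph on $n$ vertices with at least $n^2/4$ edges and a triangle also has cycles of every length in $[4,n-1]$.

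I would prove this by induction on $n$. The basic move is cycle-shortening along $C$: an edge $v_iv_j$ not on $C$ (a chord), together with one of the two arcs of $C$ between its ends, yields a shorter cycle, so by using chords of appropriate spans one can descend through consecutive cycle lengths, and $e(G)\ge n^2/4$ supplies at least $n^2/4-n$ chords. For the induction it is cleanest first to produce a cycle of length $n-1$ — equivalently a chord of span two, i.e.\ $v_{i-1}v_{i+1}\in E(G)$ for some $i$ — and then to delete a vertex of small degree so that the remaining graph is still Hamiltonian and still has more than $(n-1)^2/4$ edges (the case in which every vertex has large degree is disposed of separately, by a Dirac-type pancyclicity argument). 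Applying the inductive hypothesis to the smaller graph gives cycles of every length in $[3,n-2]$, and together with $C_{n-1}$ and $C_n=C$ this finishes the induction.

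The hard part will be the boundary case, where $e(G)$ is essentially $n^2/4$ and every inequality above is tight: there cycle-shortening is on a knife's edge and one must show that the \emph{only} Hamiltonian graph at the extremal edge count that fails to be pancyclic is $K_{n/2,n/2}$. Concretely, I anticipate that the failure to find a $C_{n-1}$ (hence the absence of any chord of span two) propagates, through further shortening arguments, into the statement that $G$ is bipartite with parts the odd- and even-indexed vertices of $C$; the degree-sum condition applied to any non-edge inside this would-be-incomplete bipartite structure then produces the missing cycle, pinning $G$ down to $K_{n/2,n/2}$ — and incidentally showing that this exception can occur only for even $n$, since an unbalanced complete bipartite graph is not even Hamiltonian. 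This stability-type analysis of the extremal configuration, rather than the generic shortening, is where essentially all the difficulty lies.
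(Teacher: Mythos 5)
The paper does not actually prove Theorem~\ref{bondy}: it is invoked as a classical result of Bondy~\cite{bondy}, so there is no internal proof to compare against and your attempt must stand on its own. Your reduction is indeed the classical route: Ore's theorem gives a Hamilton cycle, an edge count gives $e(G)\ge n^2/4$, and everything then hinges on Bondy's companion statement that a Hamiltonian graph with at least $n^2/4$ edges is pancyclic or equals $K_{n/2,n/2}$; the Mantel shortcut for the triangle-free case is fine. One small numerical point: the minimum-degree count you describe only yields $2e\ge n^2-1$, i.e.\ $e\ge\lfloor n^2/4\rfloor$, which for odd $n$ is one edge short of $n^2/4$; the clean bound follows instead from noting that $\deg_{\overline G}(u)+\deg_{\overline G}(v)\le n-2$ for every edge $uv$ of the complement and applying Cauchy--Schwarz to $\sum_v \deg_{\overline G}(v)^2$.

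The genuine gap is that the core lemma is only sketched, and the induction you outline breaks precisely in the regime the theorem is about. You give no argument that a span-two chord (hence a $C_{n-1}$) exists; under the reduced hypotheses ``Hamiltonian and $e\ge n^2/4$'' this already requires the chord-counting argument that constitutes Bondy's proof (for each missing cycle length $\ell$ one shows that chords of $C$ exclude each other in paired positions, forcing $e\le n^2/4$, and then analyses the equality case). The deletion step is also unsound as stated: the vertex whose removal keeps the graph Hamiltonian is the one skipped by the span-two chord, and nothing makes it a vertex of small degree; conversely, when $e$ is essentially $n^2/4$ and all degrees are about $n/2$, deleting \emph{any} vertex drops the edge count below $(n-1)^2/4$, so the induction cannot be applied at all. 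Your fallback for that situation, a ``Dirac-type pancyclicity argument,'' is exactly the statement being proved (Bondy's theorem under Dirac's condition), so as written it is circular unless an independent argument is supplied. Finally, the identification of $K_{n/2,n/2}$ in the boundary case --- which you yourself call the place where all the difficulty lies --- is only ``anticipated,'' not carried out. As it stands the proposal reproduces the standard reduction but defers essentially all of the actual proof.
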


\begin{theorem}[Bondy and Simonovits~\cite{BS}]\label{bondys}
Let $G$ be a graph on $n$ vertices with $|E(G)| > 100qn^{1+\frac{1}{q}}$. Then $G$ contains cycles of every even  length from $[2q, 2n^{\frac{1}{q}}]$.
\end{theorem}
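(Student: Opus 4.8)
The plan is to follow the classical breadth-first search argument of Bondy and Simonovits. If $n<q^{q}$ then $[2q,2n^{1/q}]$ is empty and there is nothing to prove, so assume $n\ge q^{q}$, which forces $n^{1/q}\ge q$. From $|E(G)|>100qn^{1+1/q}$ the average degree of $G$ exceeds $200qn^{1/q}$; repeatedly deleting a vertex of current degree at most $100qn^{1/q}=:d$ destroys at most $d$ edges, and since $nd=100qn^{1+1/q}<|E(G)|$ this cannot exhaust $V(G)$, so it terminates at a nonempty subgraph $G'\subseteq G$ with $\delta(G')>d$ and $|V(G')|\le n$. It suffices to produce $C_{2\ell}$ in $G'$ for every integer $\ell$ with $q\le\ell\le n^{1/q}$, so fix such an $\ell$ and suppose, for contradiction, that $G'$ has no cycle of length $2\ell$. (If one prefers, one may first replace $G'$ by a maximum-cut bipartite subgraph, which only halves the minimum degree but makes the BFS layers below alternate between the two sides, simplifying the parity bookkeeping.)

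Now run a breadth-first search from a judiciously chosen $x_{0}\in V(G')$, obtaining distance classes $V_{0}=\{x_{0}\},V_{1},V_{2},\dots$ and a BFS tree $T$; every edge of $G'$ then lies inside a class or between consecutive classes, and $T$ supplies a canonical $x_{0}$–$v$ path of length $i$ for each $v\in V_{i}$. The crux of the whole proof is a \emph{layer-expansion estimate}: for every index $i\ge\ell$ such that $V_{i+1}\ne\emptyset$ (and none of the classes so far is pathologically large),
\[
|V_{i+1}|\ \ge\ \frac{\delta(G')}{100q}\,|V_{i}|\ >\ n^{1/q}\,|V_{i}|.
\]
To see where this should come from: each $v\in V_{i}$ has more than $d$ neighbours, all lying in $V_{i-1}\cup V_{i}\cup V_{i+1}$, so discarding the relatively few edges that go backward or stay within $V_{i}$ leaves $\sum_{v\in V_{i}}|N(v)\cap V_{i+1}|\ge\tfrac12 d\,|V_{i}|$. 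The real work is to show that these forward edges cannot concentrate too heavily on a single $w\in V_{i+1}$: if $w$ had two neighbours $u,u'\in V_{i}$ whose deepest common ancestor in $T$ sat at level $i-\ell+1$ (which is $\ge0$ exactly because $i\ge\ell-1$), then $u\to w\to u'$ together with the two tree-branches back up to that ancestor would close into a cycle of length precisely $2\bigl(i-(i-\ell+1)+1\bigr)=2\ell$, contradicting our assumption; a more careful version of this shows $|N(w)\cap V_{i}|=O(q)$ for all but a negligible set of $w\in V_{i+1}$, and dividing the two estimates gives the display. This expansion estimate is exactly the step I expect to be the main obstacle: the difficulty is not producing \emph{some} short even cycle from a non-expanding layer but producing one of length \emph{exactly} $2\ell$, which forces careful tracking of BFS levels, cycle parities and branch points of $T$, a clever choice of the root $x_{0}$ and of the window of indices on which to run the argument, and a separate treatment of the small indices $i<\ell-1$ and of classes that have grown large prematurely. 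It is also where the absolute constant $100$ in the hypothesis is consumed.

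Granting the expansion estimate, the contradiction is immediate. If the BFS from $x_{0}$ reaches level $2\ell$, then applying the estimate at the $\ell$ indices $i=\ell,\ell+1,\dots,2\ell-1$ gives
\[
|V_{2\ell}|\ \ge\ (n^{1/q})^{\ell}\,|V_{\ell}|\ \ge\ n^{\ell/q}\ \ge\ n\ \ge\ |V(G')|,
\]
which is impossible since $V_{0},\dots,V_{2\ell}$ are pairwise disjoint and nonempty. If instead every vertex of $G'$ is within distance $2\ell-1$ of $x_{0}$, then $G'$ has small eccentricity from $x_{0}$ and one locates $C_{2\ell}$ directly inside the ball $\bigcup_{i<2\ell}V_{i}$. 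Either way $G'$, and hence $G$, contains $C_{2\ell}$; since $\ell$ ranged over all integers in $[q,n^{1/q}]$, this yields cycles of every even length in $[2q,2n^{1/q}]$, as claimed.
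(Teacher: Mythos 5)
This statement is not proved in the paper at all: it is quoted verbatim from Bondy and Simonovits~\cite{BS} and used as a black box (to produce short even cycles inside a dense regular pair in Case~(i)), so the only fair benchmark is the original BFS argument you are trying to reconstruct --- and against that benchmark your write-up has a genuine gap rather than a proof. The ``layer-expansion estimate'' you isolate is precisely the entire content of Bondy--Simonovits, and you do not prove it; you even flag it yourself as the main obstacle. The configuration you do handle (two neighbours $u,u'\in V_i$ of a single $w\in V_{i+1}$ whose deepest common ancestor sits at level exactly $i-\ell+1$) is a measure-zero special case: in general the common ancestor lies at an arbitrary level, the resulting cycle has some even length other than $2\ell$, and turning ``many forward edges into one vertex'' into a cycle of length \emph{exactly} $2\ell$ is what forces the actual BS machinery (their lemma that a sufficiently dense connected piece between or inside two consecutive levels contains a theta-type subgraph, hence paths of many different lengths between two vertices, which together with the tree paths to the common ancestor yields cycles of a whole range of even lengths including $2\ell$). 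Likewise your preliminary step ``discarding the relatively few edges that go backward or stay within $V_i$'' is unjustified: there is no a priori reason those edges are few, and bounding them under the no-$C_{2\ell}$ hypothesis is again part of the same lemma, not a triviality.

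There is also a structural problem with how you close the argument. Granting your expansion estimate at indices $i\ge\ell$, the conclusion is that the BFS \emph{cannot} reach level $2\ell$ (indeed, with $\delta(G')>100qn^{1/q}$ the graph typically has diameter $O(q)\le\ell$, so the ball of radius $2\ell-1$ essentially always swallows $G'$). Hence the case you dismiss with ``one locates $C_{2\ell}$ directly inside the ball'' is not a degenerate leftover but the case where the whole contradiction must be produced, and you give no argument for it. The genuine Bondy--Simonovits proof avoids this trap by deriving the growth/contradiction already on the first $\ell$ levels (so that $|V_\ell|$ would exceed $n$), which is only possible because their key lemma delivers cycles of length exactly $2\ell$ from dense behaviour at \emph{any} level, not just levels $i\ge\ell$. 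So the skeleton you describe is the right classical strategy, but both load-bearing steps --- the exact-length cycle lemma behind the expansion estimate and the small-radius case --- are missing, and without them the statement is not established.
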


\begin{theorem}[Chv\'atal~\cite{chvatal72}; see also Corollary~5 in Chapter~10 in \cite{BGG}] 
\label{chvatal}
Let $G$ be a graph of order $n\ge 3$ with degree sequence $d_1 \le d_2 \le \ldots \le d_n$ such that
\[
	d_k \le k < \frac n2 \implies d_{n-k} \ge n-k.
\]
Then $G$ contains a Hamiltonian cycle.
\end{theorem}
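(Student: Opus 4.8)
The plan is to prove this by contradiction, via the classical edge-maximality (``closure'') argument that also yields the theorems of Dirac and Ore as special cases. Suppose some graph on $n \ge 3$ vertices satisfies the stated condition but is not Hamiltonian. Adding an edge to a graph raises its degree sequence termwise, and I would first check that the hypothesis ``$d_k \le k < n/2 \implies d_{n-k} \ge n-k$'' is preserved under adding edges; this lets me pass to an \emph{edge-maximal} counterexample $G$: it still satisfies the hypothesis, is not Hamiltonian, yet $G + xy$ is Hamiltonian for every non-adjacent pair $x, y$. In particular $G$ is not complete, and since any Hamiltonian cycle of $G + xy$ must use the edge $xy$, the graph $G$ contains a Hamiltonian path between every pair of non-adjacent vertices.

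Next I would choose non-adjacent $u, v$ with $\deg(u) + \deg(v)$ maximum, say $\deg(u) \le \deg(v)$, and set $k := \deg(u)$. Fix a Hamiltonian path $v_1 v_2 \cdots v_n$ with $v_1 = u$ and $v_n = v$, and set $S := \{i : u v_{i+1} \in E(G)\}$ and $T := \{i : v_i v_n \in E(G)\}$, both subsets of $\{1, \dots, n-1\}$, of sizes $\deg(u)$ and $\deg(v)$. The rotation trick is the heart of this step: if some $i \in S \cap T$, then $v_1 v_2 \cdots v_i v_n v_{n-1} \cdots v_{i+1} v_1$ is a Hamiltonian cycle of $G$, a contradiction; hence $S \cap T = \emptyset$ and $\deg(u) + \deg(v) = |S| + |T| \le n - 1$. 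With $\deg(u) \le \deg(v)$ this gives $2k \le n - 1$, so $k < n/2$.

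Then I would extract two opposing facts about the degree sequence. First, for each $i \in S$ we have $i \notin T$, so $v_i$ is non-adjacent to $v$, and maximality of $\deg(u) + \deg(v)$ over non-adjacent pairs gives $\deg(v_i) \le \deg(u) = k$; as $i$ ranges over $S$ these are $|S| = k$ distinct vertices of degree at most $k$, so the sorted sequence satisfies $d_k \le k$. Since $k < n/2$, the hypothesis now forces $d_{n-k} \ge n - k$, so the set $B$ of vertices of degree at least $n - k$ satisfies $|B| \ge k + 1$. Second, every non-neighbour $w$ of $u$ has $\deg(w) \le \deg(v) \le n - 1 - k < n - k$, while $\deg(u) = k < n - k$ (because $k < n/2$); hence $B$ avoids $u$ and all its non-neighbours, i.e.\ $B \subseteq N(u)$, so $|B| \le \deg(u) = k$, contradicting $|B| \ge k + 1$. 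This contradiction finishes the proof.

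The hard part — really the only step that is not routine bookkeeping — is orchestrating the final count: one must set up the parameter $k = \deg(u)$ so that the \emph{same} number $k$ governs both ``at least $k$ vertices of low degree'' (which feeds Chv\'atal's hypothesis) and ``at most $k$ vertices outside $N(u)$'' (which blocks its conclusion), and it is exactly the strict inequality $k < n/2$, i.e.\ $n - k > k$, that forces these two vertex sets to be disjoint and produces the contradiction. Everything else — the monotonicity of the hypothesis under adding edges, the reduction to an edge-maximal counterexample, the rotation argument giving $S \cap T = \emptyset$, and the observation that non-neighbours of $u$ have degree at most $\deg(v)$ — is standard.
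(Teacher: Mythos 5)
Your proof is correct: this statement is Chv\'atal's classical theorem, which the paper only cites (to Chv\'atal~1972 and Berge's book) and does not prove, so there is no in-paper argument to compare against. What you give is essentially Chv\'atal's original proof: the hypothesis is indeed monotone under edge addition (adding an edge raises the sorted degree sequence termwise, and the condition ``$d_k>k$ or $d_{n-k}\ge n-k$ for all $k<n/2$'' is preserved), the passage to an edge-maximal non-Hamiltonian graph, the rotation argument giving $S\cap T=\emptyset$ and hence $\deg(u)+\deg(v)\le n-1$, and the final count ($k$ vertices of degree at most $k$ forcing at least $k+1$ vertices of degree at least $n-k$, all of which must lie in $N(u)$) are all carried out correctly.
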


\begin{theorem}[Jackson~\cite{jackson81}]\label{jackson}
Let $G$ be a bipartite graph with bipartition $(X,Y)$ in which every vertex of $X$ has degree at least $k$. If $2 \le |X|\le k$ and $|Y| \le 2k-2$, then $G$ contains a cycle of length $2|X|$.
\end{theorem}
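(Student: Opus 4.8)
The plan is to reduce the statement to producing an alternating cycle through all of $X$, and to obtain such a cycle by a P\'osa-type rotation/extension argument whose engine is the observation that, under the hypothesis $|Y|\le 2k-2$, any two vertices of $X$ have many common neighbours.

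Set $s:=|X|$, so $2\le s\le k$. First I would make two harmless reductions: deleting from $Y$ every vertex with no neighbour in $X$ preserves all hypotheses, so I may assume every $y\in Y$ has a neighbour, and then, since some $x\in X$ has at least $k$ neighbours, $k\le|Y|\le 2k-2$. The key consequence is that for any two $x,x'\in X$,
\[
 |N(x)\cap N(x')| \;\ge\; |N(x)|+|N(x')|-|Y| \;\ge\; 2k-(2k-2) \;=\; 2 .
\]
It is also worth recording that a cycle of length $2s$ in $G$ is precisely a cyclic ordering $x_1,\dots,x_s$ of $X$ together with \emph{distinct} vertices $y_1,\dots,y_s\in Y$ with $y_i\in N(x_i)\cap N(x_{i+1})$ for every $i$ (indices taken mod $s$); so the target is to build such an ordering and such a system of distinct representatives, and the abundance of common neighbours above is exactly what makes this plausible.

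For the main line I would take an alternating path $P=x_0\,y_1\,x_1\,y_2\cdots y_\ell\,x_\ell$ in $G$ with both endpoints in $X$, chosen so that its number of $X$-vertices is maximum (and, subject to that, $P$ as short as possible), and first prove that $P$ contains all of $X$. Indeed, if some $z\in X$ lies off $P$, then no neighbour of an endpoint of $P$ lying off $P$ can be adjacent to a vertex of $X\setminus V(P)$ --- otherwise $P$ would extend --- and P\'osa rotations at $x_0$ produce a whole family $R_0$ of alternative endpoints of alternating paths on the same vertex set, each subject to the same restriction. Since every vertex of $X$ has at least $k$ neighbours while $P$ uses only $\ell\le s-2\le k-2$ vertices of $Y$, both $z$ and each endpoint in $R_0$ have several neighbours off $P$; combining the non-adjacency restrictions with $|Y\setminus V(P)|\le 2k-2-\ell$ should force a contradiction. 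Once $V(P)\cap X=X$, a further rotation/common-neighbour step closes $P$ into a cycle of length $2s$: the endpoint $x_0$ has at least $k\ge s$ neighbours, $P$ uses only $s-1$ vertices of $Y$, and the ``at least two common neighbours'' bound lets the last edge be routed through a still-unused vertex of $Y$.

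The genuine obstacle is the rotation bookkeeping needed to force $V(P)\cap X=X$: because $|Y|\le 2k-2$ the degree budget is essentially tight, and a single layer of rotations only yields the useless inequality $\ell\ge 2$ rather than a contradiction, so one must iterate rotations (or run a double-rotation/crossover argument) while tracking exactly which vertices of $Y$ are compelled to lie on $P$ and avoiding any double-counting. An alternative that trades this difficulty for another is to fix, by a local exchange argument, a cyclic order of $X$ for which the $s$ sets $N(x_i)\cap N(x_{i+1})$ satisfy Hall's condition (the only nonobvious instances of which invoke $|Y|\le 2k-2$ once more), or to choose an $s$-element subset $Y_0\subseteq Y$ for which $G[X\cup Y_0]$ meets the hypothesis of Theorem~\ref{berge} and is hence Hamiltonian; either route rests on the same tight inequality.
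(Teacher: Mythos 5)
The paper does not prove this statement at all: it is Jackson's theorem, quoted verbatim from~\cite{jackson81} and used as a black box, so your argument would have to stand as a complete proof of Jackson's result --- and it does not. The decisive step, that an alternating path $P$ with both endpoints in $X$ and a maximum number of $X$-vertices must contain all of $X$, is exactly where the difficulty of the theorem lives, and you leave it open by your own admission: as you note, one round of P\'osa rotations only gives the count $2(k-\ell)+\ell\le|Y|\le 2k-2$, i.e.\ $\ell\ge 2$, and the promised ``iterated rotations or crossover argument'' with its bookkeeping is never carried out. Since the hypothesis $|Y|\le 2k-2$ is sharp, there is no slack for a routine counting fix; Jackson's actual proof is a substantially more delicate longest-path/induction analysis, so this is a missing idea, not a deferred verification.

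The closing step is also not sound as written. Once $P$ is an alternating Hamilton path on $X$ with endpoints $x_0,x_{s-1}$, the guarantee $|N(x_0)\cap N(x_{s-1})|\ge 2k-|Y|\ge 2$ does not let you close a cycle of length $2s$, because those common neighbours may all be interior $Y$-vertices of $P$; when $s$ is close to $k$ you would need a common neighbour outside the $s-1$ used $Y$-vertices, i.e.\ roughly $s$ common neighbours, far more than the guaranteed $2$, and the ``further rotation'' needed to repair this is again not given. The fallback routes have the same problem: choosing $Y_0\subseteq Y$ with $|Y_0|=s$ only guarantees each $x\in X$ has degree at least $k-(|Y|-s)\ge s-k+2$ into $Y_0$, which can be as small as $2$ (or vacuous when $s<k$), so the hypothesis of Theorem~\ref{berge} need not hold for any such $Y_0$; and the Hall-type condition on the sets $N(x_i)\cap N(x_{i+1})$ depends on a cyclic ordering produced by a ``local exchange argument'' that is not specified. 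In short, every route you sketch bottoms out at the same unproved tight-counting step, so the proposal is a plan rather than a proof.
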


In addition, Theorem~\ref{hall} is a standard result on matchings in bipartite graphs.
 
\begin{theorem}[Hall]
\label{hall}
Let $H$ be a bipartite graph with bipartition $(X,Y)$ with $|X| \le |Y|$. If $|N(S)| \ge |S|$ for every $S \subseteq X$, then $H$ has a matching saturating $X$.
\end{theorem}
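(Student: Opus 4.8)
\textbf{Plan for the proof of Theorem~\ref{hall} (Hall's theorem).}

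The plan is to prove this by the standard inductive argument on $|X|$, distinguishing a ``slack'' case from a ``tight'' case. First I would set up the induction: the base case $|X| = 1$ is immediate, since the single vertex of $X$ has a nonempty neighborhood by hypothesis (applied to $S = X$), so we may pick any incident edge. For the inductive step, assume the result for all bipartite graphs with a smaller first part satisfying the neighborhood condition, and let $H$ be given with bipartition $(X,Y)$, $|X| \le |Y|$, and $|N(S)| \ge |S|$ for all $S \subseteq X$.

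The main dichotomy is whether the Hall condition holds with strict inequality for all proper nonempty subsets, or whether some proper nonempty subset is ``tight.'' \emph{Case 1: slack.} Suppose $|N(S)| \ge |S| + 1$ for every $S$ with $\emptyset \ne S \subsetneq X$. Pick an arbitrary edge $xy$ with $x \in X$, $y \in Y$, and delete both $x$ and $y$, obtaining $H' = H - x - y$ with first part $X' = X \setminus \{x\}$. For any $S \subseteq X'$ we have $|N_{H'}(S)| \ge |N_H(S)| - 1 \ge (|S|+1) - 1 = |S|$, so $H'$ still satisfies the Hall condition; also $|X'| = |X| - 1 \le |Y| - 1 = |Y \setminus \{y\}|$. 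By induction $H'$ has a matching saturating $X'$, and adding the edge $xy$ gives a matching saturating $X$. \emph{Case 2: tight.} Suppose there is a set $A$ with $\emptyset \ne A \subsetneq X$ and $|N(A)| = |A|$. The subgraph $H_A$ induced on $A \cup N(A)$ has first part $A$ of size $|A| < |X|$ and clearly satisfies the Hall condition (any $S \subseteq A$ has $N_{H_A}(S) = N_H(S)$, since $N_H(S) \subseteq N_H(A) = N(A)$), and $|A| = |N(A)| = |N_{H_A}(A)|$, so $|A| \le |N(A)|$; by induction $H_A$ has a matching $M_1$ saturating $A$. Now consider $H_B = H - A - N(A)$ with first part $B = X \setminus A$; I claim it satisfies the Hall condition. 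Indeed, for $T \subseteq B$, apply the hypothesis in $H$ to $A \cup T$: $|N_H(A \cup T)| \ge |A| + |T|$, and since $N_H(A \cup T) = N(A) \cup N_{H_B}(T)$ with $N_{H_B}(T) = N_H(T) \setminus N(A)$, we get $|N(A)| + |N_{H_B}(T)| \ge |A| + |T|$, hence $|N_{H_B}(T)| \ge |T|$ using $|N(A)| = |A|$. Also $|B| = |X| - |A| \le |Y| - |A| = |Y| - |N(A)| = |Y \setminus N(A)|$. By induction $H_B$ has a matching $M_2$ saturating $B$; since $M_1$ and $M_2$ use disjoint vertex sets, $M_1 \cup M_2$ is a matching of $H$ saturating $A \cup B = X$.

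The only point requiring care — and the closest thing to an obstacle — is verifying that in Case~2 both pieces $H_A$ and $H_B$ inherit the Hall condition and the size inequality $|X| \le |Y|$; the computation for $H_B$ is the crux, and it uses precisely that $A$ was chosen to be tight so that the ``$+|A|$'' and ``$-|N(A)|$'' cancel. Everything else is bookkeeping on vertex counts. I note that the hypothesis $|X| \le |Y|$ is not actually needed to produce a matching saturating $X$ (the classical statement omits it), but keeping it through the induction as above is harmless, so I would carry it along to match the statement as given.
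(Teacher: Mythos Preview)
Your proof is correct; this is the classical inductive proof of Hall's theorem with the standard slack/tight dichotomy. The paper does not actually give a proof of this statement---it is listed among the preliminary tools as ``a standard result on matchings in bipartite graphs'' and simply cited without proof---so there is nothing to compare against, and your argument stands on its own.
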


\section{Main part of the proof of Theorem~\ref{maintheorem2}}

\begin{proof}[Proof of Theorem~\ref{maintheorem2}:]
We begin with a stability result from~\cite{BLSSW}:

\begin{lemma}[Lemma 4.1 in \cite{BLSSW}]\label{stability-lemma}
Let $0 < \delta < 1/36$ and let $G$ be a graph of sufficiently large order $k$ with $\delta(G) \ge (3/4 - \delta)k$. Suppose that we are given a $2$-edge-coloring $E(G) = E(R) \cup E(B)$. Then one of the following holds.

\begin{enumerate}
\item[(i)]There is a component of $R$ or $B$ that contains a matching on at least $(2/3+\delta)k$ vertices.

\item[(ii)]There is a set $S$ of order at least $(2/3  - \delta/2)k$ such that either $\Delta(R[S]) \le 10\delta k$ or $\Delta(B[S]) \le 10\delta k$.

\item[(iii)]There is a partition $V(G) = U_1 \cup U_2 \cup U_3 \cup U_4$ with $\min_i\{|U_i|\} \ge (1/4 - 3\delta)k$ such that there are no red edges from $U_1 \cup U_2$ to $U_3 \cup U_4$ and no blue edges from $U_1 \cup U_3$ to $U_2 \cup U_4$.
\end{enumerate}	
\end{lemma}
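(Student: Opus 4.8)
The plan is to assume that alternative~(i) fails and deduce (ii) or (iii). So suppose no component of $R$ or of $B$ contains a matching covering $(2/3+\delta)k$ vertices; equivalently, every monochromatic component has matching number less than $(1/3+\delta/2)k$. Two facts follow from $\delta(G)\ge(3/4-\delta)k$ and $\delta<1/36$, to be used repeatedly. First, every vertex has fewer than $(1/4+\delta)k$ non-neighbours, so any vertex set that is independent in \emph{both} colours has at most $(1/4+\delta)k+1$ vertices. Second, if every component of one colour had at most $\delta k$ vertices, then every vertex would have more than $(3/4-2\delta)k>k/2$ edges of the other colour leaving its own component; that colour would then contain a spanning subgraph of minimum degree exceeding $k/2$, hence (by Dirac) a Hamilton cycle, hence a connected matching on at least $k-1>(2/3+\delta)k$ vertices---contradicting the failure of~(i). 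So each colour has a component on more than $\delta k$ vertices.

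The heart of the matter is a structure statement for a monochromatic component $F$, whose matching number $\nu(F)$ is known to be less than $(1/3+\delta/2)k$. Take the Gallai--Edmonds decomposition of $F$, with vertex set partitioned as $D\cup A\cup C^{\circ}$, so that $\nu(F)=\tfrac12\bigl(|F|-(c-|A|)\bigr)$, where $c$ is the number of components of $F[D]$. Since a maximum matching of $F$ saturates $A$, we get $|A|\le\nu(F)<(1/3+\delta/2)k$; moreover each component of $F[D]$ has its $F$-neighbourhood inside $A$, and $F[C^{\circ}]$ has a perfect matching. Hence $S:=V\setminus A$ satisfies $|S|>(2/3-\delta/2)k$, and inside $S$ the colour of $F$ breaks into blocks---the components of $V\setminus V(F)$ in that colour, the factor-critical components of $F[D]$, and the components of $F[C^{\circ}]$---with no edge of that colour between distinct blocks. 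So either all of these blocks have at most $10\delta k$ vertices, whence the maximum degree of $F$'s colour within $S$ is at most $10\delta k$ and we are in case~(ii); or some block $K$ has more than $10\delta k$ vertices, and then $K$ sends edges of its own colour only into $A$, so by the degree condition every vertex of $K$ has many edges of the other colour leaving $K\cup A$, and $K$ lies in a large component of the other colour.

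If neither a largest red component nor a largest blue component yields (ii) in this way, then both colours contain such a large block $K$, and one must combine the two Gallai--Edmonds pictures. Using only the non-neighbour bound above, one argues that the large red block $K_R$, the large blue block $K_B$, the two small ``dominating'' sets, and the remainders (each independent in its own colour) are forced to interlock into a partition $V=U_1\cup U_2\cup U_3\cup U_4$ with each $|U_i|=(1/4-O(\delta))k$ such that there are no red edges from $U_1\cup U_2$ to $U_3\cup U_4$ and no blue edges from $U_1\cup U_3$ to $U_2\cup U_4$; this is case~(iii). The hypothesis $\delta<1/36$ is exactly what prevents the accumulated $O(\delta)$ slacks from overwhelming the $1/4$-sized blocks.

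\textbf{The main obstacle} is this last step. Once neither colour supplies a large sparse set for (ii), one has to carry out the Gallai--Edmonds analysis in \emph{both} colours at once and show that the red and blue near-extremal structures can be consistent with the degree condition only as the balanced four-block pattern of~(iii) and in no other configuration. The two colourings interact only through the weak bound on non-neighbours, and pinning down their joint structure from so little is both the most technical part of the proof and the reason the constant $3/4$ is sharp, as Examples~1--3 illustrate.
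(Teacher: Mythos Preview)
This lemma is not proved in the present paper at all: it is quoted verbatim as Lemma~4.1 of \cite{BLSSW}, where the full argument runs to about four pages. The only related material here is Section~\ref{lemma-extension}, and that section does \emph{not} reprove the lemma; it assumes the lemma for ordinary $2$-colourings and shows, by a short reduction, that the conclusion persists when an edge is allowed to carry both colours. So there is no ``paper's own proof'' of this statement to compare against.

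That said, your outline is not a proof but a plan with an acknowledged hole. Two concrete gaps:

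\emph{The iteration does not close.} After applying Gallai--Edmonds to a single monochromatic component $F$ and setting $S=V\setminus A$, you correctly note that the $F$-coloured edges inside $S$ live in blocks, and that these blocks include the \emph{other} components of that colour. If one such block $K$ is large, you observe that $K$ sits inside a large component of the opposite colour---but then what? Repeating the argument for that opposite-colour component can produce a large block of the original colour again, and nothing you wrote prevents this from cycling. The actual proof in \cite{BLSSW} has to organise the two colours' component structures simultaneously and track how vertices distribute between them under the degree hypothesis; you have not supplied any of that.

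\emph{Case~(iii) is asserted, not derived.} You write that the two Gallai--Edmonds pictures are ``forced to interlock'' into the four-part pattern, but give no mechanism. This is precisely where the work lies: one must show that the large red block, the large blue block, and the two small dominating sets can be refined into four parts of size at least $(1/4-3\delta)k$ with the required colour separation, and that every other vertex can be absorbed into one of the four parts without violating it. None of the constants $10\delta$ in~(ii) or $3\delta$ in~(iii) emerge from anything in your sketch, and the role of the bound $\delta<1/36$ is only gestured at.
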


Because our definition of the reduced graph is slightly different from the one in~\cite{BLSSW}, we will need to apply Lemma~\ref{stability-lemma} to a slightly more general class of graphs: $2$-edge-colored graphs in which an edge can potentially be colored both red and blue. It could be checked that the proof of Lemma~\ref{stability-lemma} in~\cite{BLSSW} continues to work:  it never uses the existence of an edge $vw \in E(R)$ to conclude that $vw \notin E(B)$. But then the reader would need to read a 4-page proof in~\cite{BLSSW}. To avoid this, in Section~\ref{lemma-extension}, we present a different argument to extend Lemma~\ref{stability-lemma}. It results in a smaller maximum value of $\delta$, but this will not matter, since below, we will take $\delta < \frac1{1000}$.

{Choose $0 < 10^{20} \cdot \epsilon < 10^{10} \cdot d < \delta < \frac1{1000}$} and a sufficiently large $n_0$ as in~\cite{BLSSW}. We let $G$ be a graph satisfying the hypotheses of Theorem~\ref{cycle-theorem}, and apply Theorem~\ref{regularity-lemma} to obtain an $\epsilon$-regular partition of $V(G)$, {a graph $G'$ satisfying conditions $R1-R5$ in Theorem~\ref{regularity-lemma},} and a reduced graph $H$. Since $d$ and $\epsilon$ are much smaller than $\delta$, the minimum degree in $H$ is at least $(\frac34 - \delta)k$, and each $v \in V(G)$ is incident to at most $\delta n$ edges not present in the subgraph $G'$ provided by Theorem~\ref{regularity-lemma}.

When we apply Lemma~\ref{stability-lemma} to $H$, there are three possibilities.

If Case~(i) of Lemma~\ref{stability-lemma} holds, then it is already shown in Lemma~2.5 of~\cite{BLSSW} that $G$ contains a monochromatic cycle, say red, of length $\ell$ for all even $\ell$ such that $4k \le \ell \le (2/3 + \delta/2)n$; in particular, of every even length from $[4k, 2t+2]$. 

Since a red matching edge in $H$ corresponds to an $\epsilon$-regular $d$-dense pair $(V_i, V_j)$ in $G$, where $(1-\epsilon)\frac{n}{k} \le |V_i| = |V_j| \le \frac{n}{k}$, there are at least $d |V_i| |V_j| \ge d(1-\epsilon)^2\frac{n^2}{k^2}> 200(\frac{2n}{k})^{\frac{3}{2}}$ edges in $R[V_i, V_j]$. By Theorem~\ref{bondys}, we have a red cycle of every even  length in $[4, 2\sqrt{\frac{2n(1-\epsilon)}{k}}]$. Since $4k \ll 2\sqrt{\frac{2n(1-\epsilon)}{k}}$, there is a red cycle of every even length in $[4,2t+2]$. {Note that the proof of Case~(i) can be viewed as an application of the blow-up lemma~\cite{KSS}.}

Suppose that Case~(ii) of Lemma~\ref{stability-lemma} holds. Let $L \subseteq V(G)$ be the union of all clusters $V_i$ such that the vertex $v_i$ of the reduced graph was an element of the set $S$ found in Case~(ii). We have $|L| \ge (2/3 - \delta/2)k|V_i|$ (where $i \in [k]$ is arbitrary), and $|V_i| = (n - |V_0|)/k \ge (1-\epsilon)n/k$, hence $|L| \ge (2/3 - \delta/2 - \epsilon)n \ge (2/3 - \delta)n$. 

Without loss of generality, it is the red edges that are sparse inside $S$, in which case $\Delta(R_H[S]) \le 10 \delta k$. For a cluster $V_i \subseteq L$, there are at most $10\delta k$ parts $V_j$, $1 \le j \le k$, such that $V_j \subseteq L$ and the density of the $\epsilon$-regular pair $(V_i, V_j)$ is greater than $d$. They contribute at most $10 \delta k \cdot \frac nk = 10\delta n$ to the red degree of a vertex in $V_i$. For all other parts $V_j \subseteq L$, the pair $(V_i, V_j)$ is $\epsilon$-regular with density $0$ in $R_{G'}$, which means that there are no red edges between $V_i$ and $V_j$ in $G'$; neither are there edges within $V_i$. Finally, each $v \in L$ has at most $\delta n$ edges in $G$ which are not in $G'$. Therefore $L \subseteq V(G)$ satisfies $\Delta(R_G[L]) \le 11\delta n$. 

We complete this case of the proof of Theorem~\ref{maintheorem2} with the following lemma, whose proof is given in Section~\ref{sparse-set-section-2}.

\begin{lemma}\label{sparse-set-2}
Let $0 < \delta < \frac1{1000}$, and let $G$ be a graph of order $n$ with $\delta(G) \ge (3n-1)/4$ with a $2$-edge-coloring $E(G) = E(R) \cup E(B)$.  Let $n = 3t+r$, where $r \in \{0,1,2\}$. Suppose that there is a set $L \subseteq V(G)$ of order at least $(2/3 - \delta)n$ such that $\Delta(R[L]) \le 11\delta n$. Then either one of $R$ and $B$  contains cycles of every integer length in $[3,2t+r]$ or  one of $R$ and $B$ contains cycles of every even  length in $[4,2t+2]$.
\end{lemma}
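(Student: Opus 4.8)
The plan is to exploit the set $L$, on which one color (say red) is almost empty, to force the \emph{other} color (blue) to be extremely dense there: every pair of vertices of $L$ is joined by an edge of $G$ except for at most $|V(G)\setminus L| \le n/3 + \delta n$ ``missing'' edges per vertex, and of the edges present in $G[L]$ all but $\le 11\delta n$ per vertex are blue. Hence inside $L$ the blue graph $B[L]$ has minimum degree at least $|L| - 1 - (n-|L|) - 11\delta n \ge |L| - n/3 - 12\delta n$, which, since $|L|\ge (2/3-\delta)n$, is comfortably larger than $|L|/2$. So $B[L]$ is a graph on $|L|\ge(2/3-\delta)n$ vertices with minimum degree $>|L|/2$; by Bondy's theorem (Theorem~\ref{bondy}) it is either pancyclic or exactly $K_{\lceil |L|/2\rceil,\lfloor|L|/2\rfloor}$. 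In the first case $B[L]$ contains cycles of every length in $[3,|L|]\supseteq[3,2t+r]$ (using $|L|\ge(2/3-\delta)n \ge 2t+r$ for large $n$), and we are immediately done with the ``every integer length'' alternative — except that short cycles of length $3,4,5$ near the top of the range are fine, but we must double-check the full interval $[3,2t+r]$ is covered, which it is since $2t+r = \lceil 2n/3\rceil - O(1) \le |L|$ for appropriate $\delta$. The delicate point is the degenerate case $B[L]\cong K_{a,b}$ with $a=\lceil|L|/2\rceil$, $b=\lfloor|L|/2\rfloor$: then $B[L]$ is bipartite, contains all \emph{even} lengths up to $2b$, but no odd cycles at all.

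So the main work is the bipartite subcase. Write $L = A\cup B'$ for the bipartition with $|A|\ge|B'|$. Blue inside $L$ gives us all even cycles in $[4,2|B'|]$; we need to reach even length $2t+2$, i.e. we need roughly $|B'|\ge t+1 \approx n/3$, but $|B'| = \lfloor|L|/2\rfloor$ could be as small as $(1/3-\delta)n$, which is just barely short of $t+1$ when $|L|$ is near its lower bound. To push past this I would use vertices of $V(G)\setminus L$: each such vertex has degree $\ge(3n-1)/4$ in $G$, hence sends many edges into $L$, and since $n - |L| \le (1/3+\delta)n$ is small, a counting/Hall-type argument (Theorem~\ref{hall}) lets me attach a few outside vertices into the blue bipartite structure on $L$ — or, if the outside edges are red, build a red structure — to lengthen the monochromatic even cycle to $2t+2$. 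Concretely I expect to argue: either enough outside vertices have $\ge |L|/2$ blue neighbors in one side of $B[L]$, giving a blue bipartite graph on $\ge 2(t+1)$ vertices with high minimum degree to which Berge's theorem (Theorem~\ref{berge}) or Bagga–Varma (Theorem~\ref{bagga}) applies to yield blue bipancyclicity through even length $2t+2$; or the outside vertices are predominantly red to $L$, and then, combined with the few red edges allowed inside $L$ plus red edges among $V(G)\setminus L$, we locate a red subgraph dense enough to apply Chvátal's theorem (Theorem~\ref{chvatal}) or Bondy's theorem and extract red cycles of all lengths in $[3,2t+r]$.

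The small-length regime (cycles of length $3,4,5$, too short for the above high-minimum-degree theorems to be the natural tool, and in fact the regularity/reduced-graph machinery was never invoked here so this is moot, but for triangles specifically) is handled directly: in the pancyclic branch of Bondy they come for free, and in the bipartite branch we either find an odd cycle outside $L$ or via an outside vertex (a vertex $v\notin L$ with two blue neighbors in the same side of $B[L]$ and one blue neighbor in the other side gives a blue $C_3$ or $C_5$), or else essentially the whole graph is close to bipartite and we chase the structure into Case (iii)-like territory, contradicting maximality of $|L|$ or handled by Theorem~\ref{bondys} applied to the dense blue graph for the even lengths.

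I expect the \textbf{main obstacle} to be the boundary case where $|L|$ is as small as $(2/3-\delta)n$ \emph{and} $B[L]$ is the complete bipartite graph $K_{\lceil|L|/2\rceil,\lfloor|L|/2\rfloor}$: then the blue even cycles inside $L$ stop at length $2\lfloor|L|/2\rfloor \approx (2/3-\delta)n$, which falls short of the target $2t+2\approx(2/3)n$ by a linear amount, so we genuinely must recruit $\Theta(\delta n)$ vertices from outside $L$ and control their colors carefully — this is where the degree hypothesis $\delta(G)\ge(3n-1)/4$ (rather than the weaker $3n/4$) and the precise arithmetic $n=3t+r$ must be used in full, balancing how many outside vertices attach blue versus red and making sure the resulting monochromatic structure is large and dense enough for one of Theorems~\ref{bagga}–\ref{jackson} to deliver the full spectrum of required cycle lengths.
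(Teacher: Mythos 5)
There is a genuine gap, and it sits exactly where the lemma is hardest. Your pancyclic branch ends with the claim that $[3,|L|]\supseteq[3,2t+r]$ because ``$2t+r\le|L|$ for appropriate $\delta$''; this is false in the boundary case that matters. Since $n=3t+r$ we have $2t+r=\tfrac{2n}{3}+\tfrac r3\ge \tfrac{2n}{3}$, while $|L|$ is only guaranteed to be $(2/3-\delta)n<2t+r$, so pancyclicity of $B[L]$ (Theorem~\ref{bondy}) leaves a shortfall of up to roughly $\delta n$ cycle lengths, and covering $\{|L|+1,\dots,2t+r\}$ is precisely the content of the lemma. The paper's proof handles this by a dichotomy you do not have: let $L'$ be the outside vertices with at least $\delta n+2$ blue edges to $L$. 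If $|L\cup L'|\ge 2t+r$, one augments $L$ by $k-|L|$ vertices of $L'$ and gets a Hamiltonian cycle of the augmented set by Chv\'atal's theorem (Theorem~\ref{chvatal}), giving blue cycles of every length up to $2t+r$. Otherwise at least $t+1$ outside vertices have at most $\delta n+1$ blue edges to $L$, hence each has at least $(5/12-3\delta)n$ \emph{red} edges to $L$, and Jackson's theorem (Theorem~\ref{jackson}) applied to the unbalanced bipartite graph $R[X,L]$ with $|X|=m$ produces a red cycle of length exactly $2m$ for every $m\le t+1$. Your proposal's red fallback --- ``apply Chv\'atal's theorem or Bondy's theorem and extract red cycles of all lengths in $[3,2t+r]$'' --- cannot work: in that situation the only usable red structure is an unbalanced bipartite graph between about $t+1$ outside vertices and $L$, which has no odd cycles and no cycles longer than about $2(t+1)$; this is exactly why the lemma's conclusion offers the weaker alternative ``even lengths in $[4,2t+2]$'', a feature your argument never explains.

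Two smaller points. First, your minimum-degree computation for $B[L]$ bounds the non-neighbors of $v\in L$ by $|V(G)\setminus L|$, which is unjustified (non-neighbors of $v$ may lie inside $L$); the correct bound comes from $\delta(G)\ge(3n-1)/4$, giving at most $(n-3)/4$ non-neighbors, hence $\delta(B[L])\ge|L|-1-\tfrac{n-3}{4}-11\delta n$. With your weaker bound the minimum degree is \emph{not} above $|L|/2$ when $|L|$ is near $(2/3-\delta)n$, so your dismissal of the bipartite alternative does not follow from your own estimate. Second, with the correct estimate the minimum degree does exceed $|L|/2$, so the complete-bipartite branch of Theorem~\ref{bondy} is vacuous; much of your proposal is devoted to a degenerate case that cannot occur, while the genuine difficulty (the lengths beyond $|L|$, and the blue-versus-red dichotomy above) is left at the level of a hoped-for counting argument.
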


Finally, suppose that Case~(iii) of Lemma~\ref{stability-lemma} holds. In this case, for $j=1,2,3,4$, let $\mc U_j$ be the union of all clusters $V_i$ such that the vertex $v_i$ of the reduced graph was an element of the set $U_j$ found in Case~(iii).

For each $j$, we have
\[
	|\mc U_j| \ge (1/4 - 3\delta)k \cdot \frac{n - |V_0|}{k} \ge (1/4 - 3\delta)(1-\epsilon)n \ge (1/4 - 4\delta)n.
\]
The graph $R_{G'}[\mc U_1 \cup \mc U_2, \mc U_3 \cup \mc U_4]$ is empty: if $v \in V_i \subseteq \mc U_1 \cup \mc U_2$ and $w \in V_j \subseteq \mc U_3 \cup \mc U_4$, then there cannot be a red edge between $v_i$ and $v_j$ in $H$, which means that the pair $(V_i, V_j)$ is $\epsilon$-regular with density $0$ in $R_{G'}$: there are no red edges in $G'$ between $V_i$ and $V_j$. In particular, $vw$ cannot be a red edge in $G'$.  Every vertex  in $G$ is incident to at most $\delta n$ edges not  in $G'$. Therefore $R_G[\mc U_1 \cup \mc U_2, \mc U_3 \cup \mc U_4]$ has maximum degree at most $\delta n$. Similarly, $B_G[\mc U_1 \cup \mc U_3, \mc U_2 \cup \mc U_4]$ has maximum degree at most $\delta n$.

The set $V_0$ in $G$ is not a part of any $\mc U_j$, but  $|V_0| \le \epsilon|V| \le \delta|V|$ by (R1).

We complete this case by the following lemma, whose proof is given in Section~\ref{four-part-section-2}.

\begin{lemma}\label{four-parts-2}
Let $0 < \delta < \frac1{1000}$, and let $G$ be a graph of order $n = 3t + r$, where $r \in \{0,2\}$, with $\delta(G) \ge (3n-1)/4$ and a $2$-edge-coloring $E(G) = E(R) \cup E(B)$. Suppose that there is a partition $V(G) = \mc U_1 \cup \mc U_2 \cup \mc U_3 \cup \mc U_4 \cup V_0$ such that 
\begin{itemize}
\item $(1/4 - 4\delta)n \le |\mc U_j|$ for each $j$, $|V_0| \le \delta n$, and
\item $R[\mc U_1 \cup \mc U_2, \mc U_3 \cup \mc U_4]$ and $B[\mc U_1 \cup \mc U_3, \mc U_2 \cup \mc U_4]$ have maximum degree at most $\delta n$.
\end{itemize}
Then one of $R$ and $B$ contains
 cycles of every even  length in $[4,2t+2]$.
\end{lemma}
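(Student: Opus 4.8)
The structure is a near-extremal "four-part" configuration: $V(G)$ is partitioned into $\mc U_1,\dots,\mc U_4$ (each of size roughly $n/4$) plus a small leftover $V_0$, where red edges essentially live only inside $\mc U_1\cup\mc U_2$ and inside $\mc U_3\cup\mc U_4$, while blue edges essentially live only inside $\mc U_1\cup\mc U_3$ and inside $\mc U_2\cup\mc U_4$. Idealizing (ignoring the $O(\delta n)$ exceptional edges and $V_0$), the red graph is a disjoint union of two "blobs" on $\mc U_1\cup\mc U_2$ and $\mc U_3\cup\mc U_4$, and the blue graph is a disjoint union of two blobs on $\mc U_1\cup\mc U_3$ and $\mc U_2\cup\mc U_4$. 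Since $|\mc U_1\cup\mc U_2|\approx n/2 \ge 2t+2$ (as $n=3t+r$), a red Hamiltonian-type cycle on $\mc U_1\cup\mc U_2$ would already be long enough; the real work is getting *all* even lengths up to $2t+2$, and handling the exceptional edges and $V_0$.

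So first I would clean up the configuration: discard $V_0$, reassign its vertices, and using $\delta(G)\ge (3n-1)/4$ together with the sparsity of the "wrong-color" bipartite graphs, show that each $\mc U_j$ is nearly complete to the appropriate neighbors in the right color, and pin down $|\mc U_j|$ more precisely (each is within $O(\delta n)$ of $n/4$, and in fact the degree condition forces them to be quite balanced). The key observation is that a vertex $v\in\mc U_1$ has $\deg_R(v)$ almost entirely inside $\mc U_1\cup\mc U_2$ and $\deg_B(v)$ almost entirely inside $\mc U_1\cup\mc U_3$, and $\deg_R(v)+\deg_B(v)\ge \deg_G(v)\ge (3n-1)/4$; combined with $|\mc U_2\cup\mc U_3|\approx n/2$ this means $v$ sees a positive-density (indeed $(1/4-O(\delta))n$-sized) red neighborhood in $\mc U_2$ and blue neighborhood in $\mc U_3$. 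The plan is then: in the red blob $G_R$ on $\mc U_1\cup\mc U_2$, the restriction to the bipartite pair $(\mc U_1,\mc U_2)$ has huge minimum degree on both sides (at least $(1/4-O(\delta))n \ge |\mc U_i|/2 + 1$), so by Theorem~\ref{bagga} (Bagga--Varma) — after symmetrizing the two parts to equal size by moving $O(\delta n)$ vertices, or by a direct degree-sum check — the balanced bipartite subgraph is bipancyclic, giving red cycles of every even length $[4, 2\min(|\mc U_1|,|\mc U_2|)]$, and then absorbing the remaining vertices of $\mc U_1,\mc U_2$ one "$+2$" step at a time via the classical hook-in argument (each unused vertex in $\mc U_1$ has $\ge 2$ neighbors in $\mc U_2$ among any large set, so we can extend a $2\ell$-cycle to a $(2\ell+2)$-cycle) pushes the even lengths up to roughly $2|\mc U_1\cup\mc U_2|\ge 2t+2$.

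The main obstacle — and the reason three of the four parts cannot simply be one part — is the case analysis forced by the exceptional $O(\delta n)$ edges and by the fact that $\min_i|\mc U_i|$ could be as small as $(1/4-4\delta)n$ while $\max_i|\mc U_i|$ could be as large as $(1/4+12\delta)n$: we must ensure that in *some* color and *some* one of the (at most four) monochromatic blobs, the blob has enough vertices, namely $\ge t+1$, so that $2t+2$ even lengths are reachable. Here I expect to split on the sizes: if some $\mc U_i\cup\mc U_j$ (a monochromatic blob) has at least $2t+2 = \frac{2n-2r+4}{3}$ vertices we run the above bipancyclicity-plus-absorption argument in that blob; if instead *all four* blobs are too small — which forces all $|\mc U_i|$ to be within $O(\delta n)$ of $n/4$ and simultaneously forces the exceptional-edge count to be genuinely positive (otherwise $n/2\ge 2t+2$ fails only for $r=2$, $n\equiv 0$, which is a boundary case to check directly) — I would use the exceptional edges, which must be plentiful by the degree condition, together with a theorem like Theorem~\ref{jackson} or a connected-matching argument to route a cycle through three or four parts. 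A careful accounting shows $|\mc U_1\cup\mc U_2|\ge n/2 - O(\delta n)$ and $2t+2 \le \frac{2n+4}{3}$, and since $\frac{2n+4}{3} < \frac n2 - O(\delta n)$ fails, one actually does need the cross edges when $r=2$; the bookkeeping of exactly how many cross edges the degree condition $(3n-1)/4$ guarantees (versus the bare $3n/4$ of Conjecture~\ref{BLSSWconj}) is where the "$-1$" in the hypothesis gets used, and getting every even length rather than just one long cycle is the delicate part. I would organize this as: (1) structural cleanup and size estimates; (2) the "large blob" subcase via Bagga--Varma plus one-vertex absorption; (3) the "all blobs small" subcase via cross edges and Jackson's theorem, checking the extreme parameter values $n\equiv 0,1,2\pmod 3$ by hand.
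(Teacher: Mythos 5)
There is a genuine gap, and it sits exactly where the real difficulty of this lemma lies. Your case split rests on the possibility that some monochromatic ``blob'' $\mc U_i \cup \mc U_j$ has at least $2t+2$ vertices, but this never happens: since each part has size at least $(1/4-4\delta)n$, any union of two parts has at most $n - 2(1/4-4\delta)n = (1/2+8\delta)n$ vertices (plus at most $\delta n$ from $V_0$), while $2t+2 \ge (2n+2)/3$; with $\delta < 1/1000$ the blob is always far too small, for every $r$ and every $n$, not just in a boundary case. Your estimate ``pushes the even lengths up to roughly $2|\mc U_1\cup\mc U_2|$'' conflates the length of a cycle inside the blob (at most $|\mc U_1\cup\mc U_2|\approx n/2$) with twice the blob's size. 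So your subcase (2) is vacuous, the short/medium lengths it would produce (the paper gets even lengths in $[4,(\frac12-8\delta)n]$ via Bagga--Varma exactly as you suggest) are the easy part, and the entire burden falls on your subcase (3), which is only gestured at. Every cycle of length close to $2t+2$ must visit at least three of the four parts and hence must cross between the two red (or two blue) halves at least twice, using exceptional connections.

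Moreover, the premise underlying your subcase (3) --- that the degree condition forces the cross edges to be ``plentiful'' --- is false: $R[\mc U_1\cup\mc U_2,\mc U_3\cup\mc U_4]$ and $B[\mc U_1\cup\mc U_3,\mc U_2\cup\mc U_4]$ may contain essentially no edges at all, with all cross connectivity supplied by a couple of special vertices (this is precisely Example~2 in the paper). The paper's proof does the following, none of which appears in your outline: if one of the cross graphs has a matching of size $3$, two of its edges can be chosen in a position where Berge's theorem (Lemma~\ref{bi-connected2}) supplies paths of any prescribed length in $R[\mc U_1,\mc U_2]$ and $R[\mc U_3,\mc U_4]$, and stitching gives every even length in $[(\frac12-8\delta)n, 2t+2]$ (Lemma~\ref{2matching}); otherwise both cross graphs have vertex covers of size at most $2$, these are moved into $V_0$, the vertices of $V_0$ are redistributed, and the minimum degree condition forces $|\mc U_j|\le (n-3)/4$ for all $j$, leaving at least three vertices in sets $X_R\cup X_B$ that have several red (resp.\ blue) edges to both halves. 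One then needs a type analysis of these few vertices to extract two vertex-disjoint same-color crossing paths of length $2$, a separate argument (Claim~\ref{blueedge}, using a red edge inside one of the parts) to repair the parity when the two connections attach in a configuration that would only close odd cycles, and a final degree-counting contradiction in the residual case. This machinery --- scarce cross connections extracted from a forced handful of leftover vertices, plus the parity issue --- is the heart of the lemma, and invoking Jackson's theorem (which the paper uses only in Lemma~\ref{sparse-set-2}) does not replace it.
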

This exhausts all cases of Lemma~\ref{stability-lemma}, completing the proof of Theorem~\ref{maintheorem2}.
\end{proof}

\section{Proof of Lemma~\ref{sparse-set-2}}\label{sparse-set-section-2}
In this section, we assume that there is a set $L \subseteq V(G)$ of order at least $\ell = (2/3 - \delta)n$ such that $\Delta(R[L]) \le 11\delta n$. We write $n=3t+r$, where $r \in \{0, 1, 2\}$.

We consider two cases; in the first case, we find blue cycles and in the other case, red cycles. {Let $L'$ be the set of vertices in $V(G)-L$ with at least $ \delta n +2$ blue edges to $L$.}

\textbf{Case 1:} {$|L \cup L'| \ge 2t+r$.} 


We begin by finding blue cycles of every length from $\{3, 4, \dots, |L|\}$. Since $\Delta(R[L]) \le 11\delta n$, the minimum degree in $B[L]$ is at least $|L|- 1 - \frac{n-3}{4} - 11 \delta n \ge 0.6 |L|$. For any two vertices $u,v \in L$, their degrees in $B[L]$ sum to more than $|L|$. 
Hence  $B[L]$ is pancyclic by Theorem~\ref{bondy} {and we obtain blue cycles of every length in $\{3, \ldots, |L|\}$. }

{If $|L| \ge 2t + r$, then we are done. Otherwise, $|L| < 2t+r$ holds, and we are still required to find blue cycles of the missing lengths $\{|L|+1, \ldots, 2t+r\}$. Let $k \in \{|L|+1, \ldots, 2t+r\}$; we will show that a blue cycle of length $k$ exists. 

Let $Y$ be obtained from $L$ by the addition of $k-|L|$ {vertices in $L'$}; let $d_1 \le d_2 \le \ldots \le d_k$ be the degree sequence of $B[Y]$. A Hamiltonian cycle in $B[Y]$ will give us a blue cycle of length~$k$. 

We verify that $d_i \ge i+1$ for all $i \le k/2$. If the vertex of degree $d_i$ was originally in $L$, then 
\[
	d_i \ge |L| - 11\delta n - (n-3)/4 \ge (5/12 - 11\delta)n \ge 0.405n,
\]
while $k/2 +1 \le 0.334n$, so $d_i \ge k/2+1 \ge i+1$. Therefore, if $d_i \le k/2$, then we are looking at a vertex of $v_i \in Y - L$ and $i  \le k - |L| \le k - \ell \le \delta n +r/3 \le \delta n + 1$. But then, $d_i \ge  \delta n + 2 \ge i+1$ by our choice of vertices to add to $Y$. By Theorem~\ref{chvatal}, $B[Y]$ contains a Hamiltonian cycle, which is a blue cycle of length exactly $k$, as desired. Thus, we find blue cycles of every length from $\{3, 4, \dots, 2t+r\}$.

\textbf{Case 2:}  {$|L \cup L'| < 2t + r$. This leaves at least $t+1$ vertices in $V(G)-L$ that have at most $ \delta n + 1$ blue edges to $L$.} 

Let $2m \in \{4, 6, \dots, 2t+2\}$. Let $X \subseteq V(G) - L $ consist of $m$ vertices, each with fewer than $\delta n + 2$ blue edges into $L$. { We claim that in the bipartite graph $R[X, L]$, every vertex $x \in X$ has degree at least $(5/12-3\delta)n$. To see this, there are at least $\ell$ vertices of $L$, $x$ is not adjacent in $G$ to at most $n-1-(3n-1)/4 < n/4$ of them, and $x$ has blue edges to fewer than $\delta n + 2$  vertices. Thus, each $x \in X$ has degree at least $\ell-n/4-\delta n - 2 \ge (2/3-\delta)n - n/4-\delta n - 2 \ge (5/12-3\delta)n$.}

Our goal is to apply Theorem~\ref{jackson} with $k = (5/12-3\delta)n$ to the graph $R[X,L]$. We have already checked that every vertex of $X$ has degree at least $k$. We verify the other two conditions:
\[
	|X| \le m \le t+1 \le n/3 + 1 \le (5/12 - 3\delta)n = k,
\]
and
\[
	|L| \le n - (t+1) \le 2n/3 \le (5/6 - 6\delta)n - 2 = 2k - 2.
\]
Therefore $R[X,L]$ contains a cycle of length $2m$, and as $m$ varies, we obtain a red cycle of every even length from $\{4, 6, \dots, 2t+2\}$.

\section{Proof of Lemma~\ref{four-parts-2}}\label{four-part-section-2}
We have a partition of $V(G)$ into $\mc U_1 \cup \mc U_2 \cup \mc U_3 \cup \mc U_4 \cup V_0$ such that 
\begin{equation}\label{61}
\mbox{ $(1/4 - 4\delta)n \le |\mc U_j|$ for each $j$, $|V_0| \le \delta n$, and}
\end{equation}
\begin{equation}\label{62}
\mbox{each of
 $R[\mc U_1 \cup \mc U_2, \mc U_3 \cup \mc U_4]$ and $B[\mc U_1 \cup \mc U_3, \mc U_2 \cup \mc U_4]$ has maximum degree at most $\delta n$.}
\end{equation}

\begin{defn}
Let $G$ be a bipartite graph with parts $X$ and $Y$. The {\em deficiency}, $\overline{d}(v)$ of a vertex $v$ is $|Y|-\deg(v)$ when $v \in X$ and $|X|-\deg(v)$ when $v\in Y$.
\end{defn}

\begin{lemma}
\label{non-degree}
In each of the graphs $R[\mc U_1, \mc U_2]$, $R[\mc U_3, \mc U_4]$, $B[\mc U_1, \mc U_3]$, and $B[\mc U_2, \mc U_4]$, every vertex has deficiency at most $7\delta n$.
\end{lemma}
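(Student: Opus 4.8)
The plan is to prove the bound for one representative vertex — a vertex $v\in\mc U_1$ regarded inside the bipartite graph $R[\mc U_1,\mc U_2]$ — and to obtain the other seven cases (the two sides of each of the four listed graphs) by symmetry. This reduction is legitimate: the hypotheses~\eqref{61}--\eqref{62} together with $\delta(G)\ge(3n-1)/4$ are invariant under the simultaneous swaps $\mc U_1\leftrightarrow\mc U_2,\ \mc U_3\leftrightarrow\mc U_4$; under $\mc U_1\leftrightarrow\mc U_3,\ \mc U_2\leftrightarrow\mc U_4$; and under interchanging the colors $R$ and $B$ while swapping $\mc U_2\leftrightarrow\mc U_3$ (this last one sends the first condition of~\eqref{62} to the second and vice versa). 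These permutations generate a group acting transitively on the eight (graph, side) pairs, so it suffices to treat one.

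Fix $v\in\mc U_1$ and let $\rho$ be the number of $w\in\mc U_2$ with $vw\in E(R)$, so that the deficiency of $v$ in $R[\mc U_1,\mc U_2]$ equals $\overline{d}(v)=|\mc U_2|-\rho$; the goal is $\overline{d}(v)<7\delta n$. I would first record two consequences of~\eqref{62}: since $v\in\mc U_1\subseteq\mc U_1\cup\mc U_3$, the vertex $v$ sends at most $\delta n$ blue edges into $\mc U_2\cup\mc U_4$; and since $v\in\mc U_1\subseteq\mc U_1\cup\mc U_2$, it sends at most $\delta n$ red edges into $\mc U_3\cup\mc U_4$.

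Next I would partition the at least $(3n-1)/4$ neighbours of $v$ in $G$ according to which part contains them: at most $|\mc U_1|-1$ lie in $\mc U_1$, at most $|\mc U_3|$ lie in $\mc U_3$, at most $|V_0|\le\delta n$ lie in $V_0$, and the neighbours in $\mc U_2\cup\mc U_4$ consist of the $\rho$ red ones in $\mc U_2$, at most $\delta n$ red ones in $\mc U_4$, and at most $\delta n$ blue ones in $\mc U_2\cup\mc U_4$ \emph{altogether}. The last clause is the only point needing care, and the main obstacle in the argument: the blue edges from $v$ into $\mc U_2$ and into $\mc U_4$ must be charged to a single budget of $\delta n$, since treating them as two separate budgets would weaken the conclusion to $8\delta n$. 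Summing the estimates gives
\[
\frac{3n-1}{4}\le\deg_G(v)\le\bigl(|\mc U_1|-1\bigr)+|\mc U_3|+\rho+3\delta n .
\]
Finally, using $|\mc U_4|\ge(1/4-4\delta)n$ and $|V_0|\ge0$ in~\eqref{61}, I would bound $|\mc U_1|+|\mc U_3|=n-|\mc U_2|-|\mc U_4|-|V_0|\le(3/4+4\delta)n-|\mc U_2|$; substituting this into the displayed inequality and rearranging yields $\rho\ge|\mc U_2|+\tfrac34-7\delta n$, hence $\overline{d}(v)=|\mc U_2|-\rho<7\delta n$, which completes the representative case and, by symmetry, the lemma.
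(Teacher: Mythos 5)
Your proof is correct and follows essentially the same route as the paper: both arguments combine the max-degree bounds in~\eqref{62} with $\delta(G)\ge(3n-1)/4$ and the lower bound on $|\mc U_4|$ from~\eqref{61} in a direct count, the only difference being bookkeeping (the paper counts non-neighbors of $v$, so $V_0$ never enters and the blue budget is charged separately for $\mc U_2$ and $\mc U_4$, whereas you count neighbors by part, pay $\delta n$ for $V_0$, and merge the blue budget — both tallies land at $7\delta n$). The symmetry reduction you spell out is exactly the paper's ``without loss of generality.''
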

\begin{proof}
Without loss of generality, consider the graph $R[\mc U_1, \mc U_2]$ and let $v \in \mc U_1$. An edge from $v$ to $\mc U_4$ would be in either $R[\mc U_1 \cup \mc U_2, \mc U_3 \cup \mc U_4]$ or $B[\mc U_1 \cup \mc U_3, \mc U_2 \cup \mc U_4]$, each of which by~\eqref{62} has maximum degree at most $\delta n$; so there can be at most $2\delta n$ such edges.  Since $|\mc U_4| \ge (1/4 - 4\delta)n$, there are at least $(1/4-6\delta)n$ vertices in $\mc U_4$ not adjacent to $v$. 
Since $\delta(G)\geq (3n-1)/4$,  there are at most $(n-3)/4<n/4$ vertices not adjacent to $v$; therefore $v$ has deficiency at most $6\delta n$ in $G[\mc U_1, \mc U_2]$. Finally, each blue edge of $v$ in $G[\mc U_1,\mc  U_2]$ is also in $B[\mc U_1\cup \mc U_3, \mc U_2 \cup \mc U_4]$, so by~\eqref{62} there are at most $\delta n$ such edges, and the deficiency of $v$ in $R[\mc U_1, \mc U_2]$ is at most $7\delta n$.
\end{proof}


We first find monochromatic cycles of every even length from $[4, (\frac12 - 8\delta)n]$, in both, $R$ and $B$. For red cycles, consider $R[\mc U_1, \mc U_2]$.  We pick a set $X \subseteq \mc U_1$ and a set $Y \subseteq \mc U_2$ such that $|X| = |Y| = (\frac14 - 4\delta)n$. By Lemma~\ref{non-degree}, each vertex in $X$ has deficiency at most $7 \delta n$ in $R[X, Y]$ to $Y$ and each vertex in $Y$ has deficiency at most $7 \delta n$ in $R[X, Y]$ to $X$. Since the degrees of any pair of non-adjacent vertices in $R[X, Y]$ sum to at least $(\frac14 - 4\delta)n + 1$, $R[X, Y]$ is bipancyclic by Theorem~\ref{bagga}. We obtain blue cycles in $B[\mc U_1, \mc U_3]$ by the same argument.

In the remainder of this section, we show that either $R$ or $B$ contains cycles of every even length from $[(\frac18 - \delta)n, 2t+2]$. 
First,  we need to prove some preliminary lemmas.


\begin{lemma}\label{bi-connected2}
Let $H$ be a bipartite graph with parts $A_1$ and  $A_2$, where $|A_1|, |A_2| \ge (\frac14 - 5\delta)n$, and assume every vertex of $H$ has deficiency at most $10\delta n$. Then
\begin{enumerate}
\item \label{odd-path} For each odd  $\ell\in [(\frac14 - 4\delta)n - 5, t+5]$ and any vertices $x_1 \in A_1$, $x_2 \in A_2$, there is an $(x_1, x_2)$-path in $H$ of length exactly $\ell$.

\item \label{even-path} For each even  $\ell\in [(\frac14 - 4\delta)n - 5, t + 5]$ and any vertices $x_1, x_1' \in A_1$, there is an $(x_1, x_1')$-path in $H$ of length exactly $\ell$.
\end{enumerate}
\end{lemma}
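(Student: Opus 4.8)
The plan is to deduce both parts from Berge's theorem (Theorem~\ref{berge}): to produce a path of a prescribed length between two prescribed vertices, I would pass to a balanced bipartite subgraph of $H$ whose order is chosen so that its Hamiltonian paths have exactly the required length, and then find in it a Hamiltonian path with the prescribed endpoints. The key preliminary observation is a bi-connectivity sub-claim: if $B_1 \subseteq A_1$ and $B_2 \subseteq A_2$ with $|B_1| = |B_2| = m'$, where $30\delta n \le m' \le \min\{|A_1|,|A_2|\}$, then for every $u \in B_1$ and $v \in B_2$ the graph $H[B_1 \cup B_2]$ has a Hamiltonian path with endpoints $u$ and $v$. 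Indeed, since every vertex of $H$ misses at most $10\delta n$ vertices of the opposite part, every vertex of $H[B_1 \cup B_2]$ has degree at least $m' - 10\delta n \ge \frac12 m' + 1$ (using $m' \ge 20\delta n + 2$); hence, in the notation of Theorem~\ref{berge}, for any relevant indices $i,j$ we get $\deg(u_i) + \deg(v_j) \ge 2(\frac12 m' + 1) = m' + 2$, so Theorem~\ref{berge} applies.

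For part~(\ref{odd-path}), given an odd $\ell$ in the stated range, I would set $m' = (\ell+1)/2$. From $\ell \ge (\frac14 - 4\delta)n - 5$ one gets $m' \ge (\frac18 - 2\delta)n - 2 \ge 30\delta n$ for $n$ large and $\delta < \frac1{1000}$, and from $\ell \le t+5 \le n/3 + 5$ one gets $m' \le n/6 + 3 \le (\frac14 - 5\delta)n \le \min\{|A_1|,|A_2|\}$. So one can pick $B_1 \subseteq A_1$ and $B_2 \subseteq A_2$ with $x_1 \in B_1$, $x_2 \in B_2$, and $|B_1| = |B_2| = m'$; the sub-claim gives a Hamiltonian path of $H[B_1 \cup B_2]$ from $x_1$ to $x_2$, which has $2m'$ vertices and hence length $2m' - 1 = \ell$.

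For part~(\ref{even-path}), given an even $\ell$ and distinct $x_1, x_1' \in A_1$, I would pick any neighbour $z \in A_2$ of $x_1$ and search for a path of length $\ell - 1$ from $z$ to $x_1'$ inside $H - x_1$, then prepend the edge $x_1 z$. The graph $H - x_1$ is bipartite with parts $A_1 \setminus\{x_1\}$ and $A_2$ of sizes still at least $(\frac14 - 5\delta)n - 1$, in which every vertex misses at most $10\delta n + 1$ vertices of the opposite part, so the sub-claim argument still applies with a harmlessly weakened constant. Taking $m'' = \ell/2$ — which again satisfies $30\delta n \le m'' \le \min\{|A_1| - 1, |A_2|\}$ by the same estimates — I would choose $B_1' \subseteq A_1 \setminus\{x_1\}$ with $x_1' \in B_1'$ and $B_2' \subseteq A_2$ with $z \in B_2'$, $|B_1'| = |B_2'| = m''$, and extract from Theorem~\ref{berge} a Hamiltonian path of $H[B_1' \cup B_2']$ from $z$ to $x_1'$ of length $2m'' - 1 = \ell - 1$. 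Since $x_1 \notin B_1' \cup B_2'$, adding the edge $x_1 z$ yields a genuine $(x_1, x_1')$-path of length $\ell$.

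I do not expect any conceptual obstacle; the only real work is the bookkeeping with the size windows. The delicate point is checking that $m'$ (and $m''$) stays comfortably above roughly $20\delta n$ — this is exactly where the lower bound $\ell \ge (\frac14 - 4\delta)n - 5$ is used — and comfortably below $\min\{|A_1|,|A_2|\}$, which uses $\ell \le t+5$ together with $t \le n/3$, and verifying that deleting the single vertex $x_1$ in part~(\ref{even-path}) does not spoil any of these inequalities.
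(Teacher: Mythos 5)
Your proposal is correct and follows essentially the same route as the paper: restrict to a balanced bipartite subgraph containing the prescribed endpoints, of size chosen so a Hamiltonian path has exactly the desired length, verify the degree-sum condition via the deficiency bound, and apply Berge's theorem (Theorem~\ref{berge}); for the even case, both arguments attach a single pendant edge (you hang it off $x_1$, the paper off $x_1'$, which is an immaterial symmetry).
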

\begin{proof}
We prove a stronger result: that the conclusion of the lemma holds for all $\ell$ satisfying $80\delta n + 3 \le \ell \le (\frac12 - 10\delta)n - 1$.

To prove Statement~\ref{odd-path}, we pick a set of vertices $X_1 \subseteq A_1$ such that $|X_1| = \frac12 (\ell+1)$ and $x_1 \in X_1$, and a set of vertices $X_2 \subseteq A_2$ such that $|X_2| = \frac12 (\ell+1)$ and $x_2 \in A_2$, noting that $|A_i| \ge (\frac14 - 5\delta)n \ge \frac12(\ell+1)$ for $i=1,2$. Since every vertex in $H$ has deficiency at most $10\delta n$, the same is true for $H' := H[X_1, X_2]$, and therefore every vertex of $H'$ has degree at least  $\frac12 (\ell+1) - 10\delta n$.

In particular, for any two vertices $u \in X_1$, $v \in X_2$, 
\[
	\deg_{H'}(u) + \deg_{H'}(v) \ge (\ell+1) - 20\delta n \ge \frac12 (\ell+1) + 2,
\]
and therefore $H'$ is Hamiltonian bi-connected by Theorem~\ref{berge}. In particular, $H'$ contains a Hamiltonian $(x_1,x_2)$-path, which has length $\ell$.

To prove \ref{even-path}, we first pick any $x_2 \in A_2$ adjacent to $x_1'$, then proceed as above with subsets $X_i \subseteq A_i$ of size $\frac12\ell$, making sure that $x_1' \notin A_1$. The same argument finds an $(x_1,x_2)$-path of length $\ell-1$, which extends to an $(x_1,x'_1)$-path of length $\ell$ with the addition of the  edge $x_2x_1'$.
\end{proof}

\begin{lemma}\label{2matching}
For every even length $2\ell\in [(\frac12 - 8\delta)n, 2t+2]$, we can find a red cycle (blue in Case 4) of length exactly $2\ell$ in $G$ in the following cases.
\begin{enumerate}
\item Both $R[\mc U_1, \mc U_3]$ and $R[\mc U_2, \mc U_4]$ contain at least one edge: two red edges $x_1y_1$ and $x_2y_2$ with $x_1 \in \mc U_1$, $y_1 \in \mc U_3$, $x_2 \in \mc U_2$ and $y_2 \in \mc U_4$.

\item We find an edge in each of $R[\mc U_1, \mc U_4]$ and $R[\mc U_2, \mc U_3]$, or we find a matching of size $2$ in any of $R[\mc U_i, \mc U_j]$ where $i \in \{1,2\}$ and $j \in \{3,4\}$.

\item As in Cases 1 and 2, but with the edges $x_1y_1, x_2y_2$ or the edges of the matching replaced by vertex-disjoint paths of length $2$ with no interior vertices in $\mc U_1 \cup \mc U_2 \cup \mc U_3 \cup \mc U_4$.

\item As in Cases 1, 2, and 3, but with the corresponding blue structures between $\mc U_1 \cup \mc U_3$ and $\mc U_2 \cup \mc U_4$. 
\end{enumerate} 
\end{lemma}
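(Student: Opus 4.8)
The plan is to assemble each desired monochromatic cycle from two long monochromatic paths, one inside each of the two ``dense'' bipartite pairs of the same color, glued together by the two short structures supplied by the case hypotheses. First I would record that, by~\eqref{61} and Lemma~\ref{non-degree}, each of the four bipartite graphs $R[\mc U_1,\mc U_2]$, $R[\mc U_3,\mc U_4]$, $B[\mc U_1,\mc U_3]$, $B[\mc U_2,\mc U_4]$ has both parts of size at least $(\tfrac14-4\delta)n\ge(\tfrac14-5\delta)n$ and every vertex of deficiency at most $7\delta n\le 10\delta n$, so Lemma~\ref{bi-connected2} applies to each of them: for any odd $\ell\in[(\tfrac14-4\delta)n-5,\,t+5]$ there is a red path of length $\ell$ joining any prescribed vertex of $\mc U_1$ to any prescribed vertex of $\mc U_2$ (and similarly for $\mc U_3,\mc U_4$), and for any even $\ell$ in that interval there is a red path of length $\ell$ joining any two prescribed vertices both lying in $\mc U_1$, or both in $\mc U_2$ (and similarly for $\mc U_3,\mc U_4$). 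It then suffices to treat Cases~1--3 in red, since Case~4 follows by the symmetry interchanging $\mc U_2$ with $\mc U_3$ and swapping the two colors, which preserves~\eqref{61}--\eqref{62} and turns the blue structures of Case~4 into the red structures of Cases~1--3.

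In Case~1, with red edges $x_1y_1$ ($x_1\in\mc U_1$, $y_1\in\mc U_3$) and $x_2y_2$ ($x_2\in\mc U_2$, $y_2\in\mc U_4$), I would build the cycle $x_1\,P_1\,x_2\,y_2\,P_2\,y_1\,x_1$, where $P_1$ is an $(x_1,x_2)$-path inside $R[\mc U_1,\mc U_2]$ and $P_2$ is a $(y_2,y_1)$-path inside $R[\mc U_3,\mc U_4]$; since the ends of each $P_i$ lie in different parts, both $\ell_1:=|P_1|$ and $\ell_2:=|P_2|$ are odd, and the cycle has length $\ell_1+\ell_2+2$. The first alternative of Case~2 (edges $x_1y_1\in R[\mc U_1,\mc U_4]$ and $x_2y_2\in R[\mc U_2,\mc U_3]$) is entirely analogous, with $\mc U_3$ and $\mc U_4$ interchanged and both connecting paths again odd. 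For the second alternative of Case~2 --- a matching $\{a_1b_1,a_2b_2\}$ in some $R[\mc U_i,\mc U_j]$ with $i\in\{1,2\}$, $j\in\{3,4\}$, so that $a_1\ne a_2$ in $\mc U_i$ and $b_1\ne b_2$ in $\mc U_j$ --- I would build $a_1\,P_1\,a_2\,b_2\,P_2\,b_1\,a_1$, where now $P_1$ is an $(a_1,a_2)$-path inside $R[\mc U_1,\mc U_2]$ with both ends in $\mc U_i$ (so $\ell_1$ even) and $P_2$ is a $(b_2,b_1)$-path inside $R[\mc U_3,\mc U_4]$ with both ends in $\mc U_j$ (so $\ell_2$ even), giving a cycle of length $\ell_1+\ell_2+2$. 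Case~3 uses the identical skeleton with each connecting edge replaced by its prescribed length-$2$ path through $V_0$, yielding a cycle of length $\ell_1+\ell_2+4$. In every one of these constructions $P_1\subseteq\mc U_1\cup\mc U_2$ and $P_2\subseteq\mc U_3\cup\mc U_4$ are automatically disjoint, the bridge interiors (when present) lie in $V_0$ and are disjoint from $P_1\cup P_2$, and the two bridges are internally vertex-disjoint by hypothesis, so each construction is a genuine cycle.

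It remains to choose $\ell_1$ and $\ell_2$. In all cases $\ell_1\equiv\ell_2\pmod 2$, so $s:=\ell_1+\ell_2$ is even and the cycle length is $s+c$ with $c\in\{2,4\}$. Given a target even length $2\ell\in[(\tfrac12-8\delta)n,\,2t+2]$, I would put $s=2\ell-c$, an even integer in $[(\tfrac12-8\delta)n-4,\,2t]$, and split $s=\ell_1+\ell_2$ with $\ell_1,\ell_2$ of the required parity inside $[(\tfrac14-4\delta)n-5,\,t+5]$. This is possible because, for large $n$ and $\delta<\tfrac1{1000}$, this interval is long and contains integers of each parity: if $a^+$ and $b^-$ denote the smallest and largest integers of the required parity in it, then every even number in $[2a^+,\,2b^-]$ is a sum of two such integers, while $2a^+\le(\tfrac12-8\delta)n-8\le s$ and $2b^-\ge 2t+8\ge s$. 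With $\ell_1,\ell_2$ fixed, the first part of Lemma~\ref{bi-connected2} (for odd $\ell_i$) or the second part (for even $\ell_i$, relabeling the relevant pair so that the part containing both endpoints plays the role of $A_1$) produces $P_1$ and $P_2$ of the prescribed lengths and endpoints, completing a cycle of length exactly $2\ell$.

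No single step here is deep; the work is in the bookkeeping --- checking for Case~1, each alternative of Case~2, their Case~3 variants, and the blue mirror images of Case~4 that the prescribed endpoints land in the parts that force the two connecting paths to have equal parity, and that the realizable window $[(\tfrac12-8\delta)n-c,\,2t+10+c]$ of even cycle lengths contains all of $[(\tfrac12-8\delta)n,\,2t+2]$. Everything reduces to the two elementary length estimates above together with repeated invocations of Lemma~\ref{bi-connected2}, and the minimum-degree hypothesis enters only through Lemma~\ref{non-degree}.
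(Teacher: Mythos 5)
Your proposal is correct and follows essentially the same route as the paper: build two long red (or blue) paths of prescribed lengths and parities via Lemma~\ref{bi-connected2} (justified by Lemma~\ref{non-degree}), one in $R[\mc U_1,\mc U_2]$ and one in $R[\mc U_3,\mc U_4]$, and close them up using the hypothesized edges, matching, or length-$2$ bridges. The paper writes out only Case~1 and declares the rest ``similar''; your write-up just makes that parity and length bookkeeping explicit (with one harmless arithmetic slack in the bound $2a^+\le(\tfrac12-8\delta)n-8$, which is not needed since $s\ge(\tfrac12-8\delta)n-4$ suffices).
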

\begin{proof}
We prove only Case $1$, since  the proofs in Cases $2$, $3$, and $4$ are similar. If $\ell$ is even, then by Lemma~\ref{bi-connected2}, we can find a red $(x_1,x_2)$-path $P_1$ of length $\ell-1$ in $R[\mc U_1, \mc U_2]$ and a red $(y_1,y_2)$-path $P_2$ of length $ \ell-1$ in $R[\mc U_3, \mc U_4]$. If $\ell$ is odd, we find paths of length $\ell$ and $\ell-2$ instead. We then connect $P_1$ and $P_2$ by adding the edges $x_1y_1$ and $x_2y_2$ to obtain a red cycle of length exactly~$2\ell$.
\end{proof}

Suppose $R[\mc U_1 \cup \mc U_2, \mc U_3 \cup \mc U_4]$ contains a matching $M$ of size 3. We claim that in this case 
one of the cases in Lemma~\ref{2matching}  occurs. Suppose otherwise.
Since Case 2 of the lemma does not hold, all edges of $M$ are in distinct $R[\mc U_i, \mc U_j]$ where $i \in \{1,2\}$ and $j \in \{3,4\}$.
By symmetry, we may assume an edge in $M$ is in $R[\mc U_1, \mc U_3]$. Then either we obtain Case 1, or else the other two are not in $R[\mc U_2, \mc U_4]$,
and we have Case 2 of the lemma. Thus, if $R[\mc U_1 \cup \mc U_2, \mc U_3 \cup \mc U_4]$ has a matching of size $3$, then we have a red cycle of every even length from $[(\frac12 - 8\delta)n, 2t+2]$.

Thus, it is enough to consider the situation when neither $R[\mc U_1 \cup \mc U_2, \mc U_3 \cup \mc U_4]$ nor (by symmetry) 
$B[\mc U_1 \cup \mc U_3, \mc U_2 \cup \mc U_4]$ has a matching of size $3$.
In this case, each of them has a vertex cover of size at most $2$. 
 Move the vertices in these vertex covers to $V_0$. Increasing $|V_0|$ by at most $4$, we ensure that both $R[\mc U_1 \cup \mc U_3, \mc U_2 \cup \mc U_4]$ and $B[\mc U_1 \cup \mc U_2, \mc U_3 \cup \mc U_4]$ are empty.

Next, let $X_R = X_B = \emptyset$. We will process the vertices of $V_0$ one at a time, adding each of them to one of $\mc U_1, \mc U_2, \mc U_3, \mc U_4, X_R, X_B$.

Pick a vertex  $v \in V_0$. 
\begin{enumerate}
\item If $v$ has at least three red edges to each of $\mc U_1 \cup \mc U_2$ and $\mc U_3 \cup \mc U_4$, we move $v$ from $V_0$ to $X_R$.

\item If $v$ has at least three blue edges to each of $\mc U_1 \cup \mc U_3$ and $\mc U_2 \cup \mc U_4$, we move $v$ from $V_0$ to $X_B$.

\item If $v$ has at most two red edges to $\mc U_1 \cup \mc U_2$ and at most two blue edges to $\mc U_1 \cup \mc U_3$, we move $v$ from $V_0$ to $\mc U_{4}$.

\item If $v$ has at most two red edges to $\mc U_1 \cup \mc U_2$ and at most two blue edges to $\mc U_2 \cup \mc U_4$, we move $v$ from $V_0$ to $\mc U_{3}$.

\item If $v$ has at most two red edges to $\mc U_3 \cup \mc U_4$ and at most two blue edges to $\mc U_1 \cup \mc U_3$, we move $v$ from $V_0$ to $\mc U_{2}$.

\item If $v$ has at most two red edges to $\mc U_3 \cup \mc U_4$ and at most two blue edges to $\mc U_2 \cup \mc U_4$, we move $v$ from $V_0$ to $\mc U_{1}$.
\end{enumerate}
Note that these conditions check edges from $v$ to the sets $\mc U_1, \mc U_2, \mc U_3, \mc U_4$ at that moment, including vertices of $V_0$ that have already been processed and added to these sets.

At each step, $R[\mc U_1 \cup \mc U_2, \mc U_3 \cup \mc U_4]$ and $B[\mc U_1 \cup \mc U_3, \mc U_2 \cup \mc U_4]$, which initially start out empty, gain at most two edges. Therefore once $V_0$ is processed, each of these graphs has at most $2(\delta n+4)$ edges. 

\begin{claim}\label{post-processing}
After $V_0$ is processed, in each of $R[\mc U_1, \mc U_2], R[\mc U_3, \mc U_4]$, $B[\mc U_1, \mc U_3]$, $B[\mc U_2, \mc U_4]$, each vertex has deficiency at most $8\delta n + 4$.
\end{claim}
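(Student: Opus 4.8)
The plan is to first reduce, using the symmetries of the hypotheses~\eqref{61}--\eqref{62} and of the six placement rules, to bounding the deficiency of a single vertex $v$ of the final set $\mc U_1$ inside $R[\mc U_1, \mc U_2]$; the bounds for $R[\mc U_3, \mc U_4]$, $B[\mc U_1, \mc U_3]$, and $B[\mc U_2, \mc U_4]$ then follow by permuting $\mc U_1, \mc U_2, \mc U_3, \mc U_4$ and/or swapping the two colors (these operations carry the placement rules to themselves). The structural fact I would extract first is that after the two matchings of size $3$ are destroyed, no vertex is ever \emph{removed} from any $\mc U_j$, so each $\mc U_j$ only grows during the processing of $V_0$, and every vertex it acquires was in $V_0$ at the time. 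Let $W$ be the set of all vertices that were ever in $V_0$, i.e.\ the original $V_0$ together with the at most $4$ vertices placed there to destroy the matchings; then $|W| \le \delta n + 4$, and the final $\mc U_2$ differs from the original $\mc U_2$ only by at most $|W|$ extra vertices. Such ``new'' vertices contribute at most $|W| \le \delta n + 4$ to the deficiency of $v$ regardless of anything else, which accounts for the gap between $7\delta n$ in Lemma~\ref{non-degree} and the claimed $8\delta n + 4$.

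I would then distinguish two cases by the history of $v$. If $v$ lay in the original $\mc U_1$ and was never moved to $V_0$, then the ``old'' part of the final $\mc U_2$ is contained in the original $\mc U_2$, so Lemma~\ref{non-degree} (for the original partition) shows that $v$ fails to be red-adjacent to at most $7\delta n$ of those vertices; adding the at most $\delta n + 4$ new ones gives deficiency at most $8\delta n + 4$. The other case is that $v$ entered $\mc U_1$ during the processing, which can happen only through the rule that sends a vertex to $\mc U_1$ --- that is, $v$ had at most two red edges to $\mc U_3 \cup \mc U_4$ and at most two blue edges to $\mc U_2 \cup \mc U_4$ \emph{as these sets stood at the step when $v$ was processed}. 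Lemma~\ref{non-degree} is not available for such $v$, so I would return to the degree hypothesis. Writing $\mc U_2^{0}$ and $\mc U_4^{0}$ for the sets $\mc U_2$ and $\mc U_4$ at that step, the rule gives at most $4$ edges from $v$ to $\mc U_4^{0}$ and at most $2$ blue edges from $v$ to $\mc U_2^{0}$; since $\mc U_4$ only grows, $|\mc U_4^{0}| \ge (1/4 - 4\delta)n - 4$, while $v$ has fewer than $n/4$ non-neighbors in $G$, so at most $5\delta n$ of $v$'s non-neighbors lie outside $\mc U_4^{0}$, whence at most $5\delta n$ vertices of $\mc U_2^{0}$ are non-neighbors of $v$. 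Thus $v$ misses at most $5\delta n + 2$ vertices of $\mc U_2^{0}$ in red, and adding the at most $|W| \le \delta n + 4$ vertices placed in $\mc U_2$ after $v$ was processed keeps the deficiency well below $8\delta n + 4$ for large $n$.

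The only step that needs genuine care rather than bookkeeping is this second case: because the rule placing a vertex into $\mc U_1$ is stated in terms of the edges of $v$ to $\mc U_2 \cup \mc U_4$ and $\mc U_3 \cup \mc U_4$ \emph{at the moment $v$ is processed}, I must control what happens to $\mc U_2$ and $\mc U_4$ afterward --- and that is exactly what the ``only grows'' observation supplies: it keeps $\mc U_4^{0}$ large and bounds by $|W|$ the number of vertices later added to $\mc U_2$. Everything else --- the symmetry reduction, the accounting with $W$, and the appeal to Lemma~\ref{non-degree} in the first case --- is routine, so I would keep those parts brief.
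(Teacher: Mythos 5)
Your proof is correct and takes essentially the same route as the paper's: the same case split between vertices that stayed in their part (bounded via Lemma~\ref{non-degree} plus the at most $\delta n+4$ vertices ever in $V_0$) and vertices placed by one of the six rules (bounded by combining the at most four edges to the opposite part with the minimum-degree condition and the at most two blue edges permitted by the rule, then adding later growth). The only differences are cosmetic: you argue for $\mc U_1$ and $R[\mc U_1,\mc U_2]$ rather than the paper's symmetric representative $\mc U_4$ and $R[\mc U_3,\mc U_4]$, and your constants are slightly looser but still comfortably below $8\delta n+4$.
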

\begin{proof}
By symmetry, it suffices to consider vertices in $\mc U_4$ and their deficiency in $R[\mc U_3, \mc U_4]$. Let $v$ be such a vertex. If $v$ was already in $\mc U_4$ before $V_0$ was processed, its deficiency was originally at most $7\delta n$ by Lemma~\ref{non-degree}, and increased by at most $\delta n + 4$: the number of vertices in $V_0$. The total deficiency $8\delta n + 4$ satisfies the claim.

Consider the alternative: vertex $v$ was moved to $\mc U_4$ from $V_0$ by Case 3 of the process. Let us consider the states of $\mc U_1, \mc U_2, \mc U_3, \mc U_4, V_0$ immediately after this move.

At that point, $v$ has at most $4$ edges to $\mc U_1$: at most two red edges and at most two blue edges. We have $|\mc U_1| \ge (\frac14 - 4\delta)n-4$, giving us at least $(\frac14 - 4\delta)n - 8$ vertices $v$ is not adjacent to. However, there are at most $n-1 -\delta(G) < \frac14 n$ such vertices total, so $v$ is adjacent to all but $4\delta n + 8$ vertices outside $\mc U_1$. In particular, $v$ has deficiency at most $4\delta n + 8$ in $G[\mc U_3, \mc U_4]$. Since $v$ has at most two blue edges to $\mc U_1 \cup \mc U_3$, $v$ has deficiency at most $4\delta n +10$ in $R[\mc U_3, \mc U_4]$ at this point in the process.

After the remaining part of $V_0$ is processed, the deficiency of $v$ in $R[\mc U_3, \mc U_4]$ can increase by at most $|V_0| \le \delta n + 4$. At the end of the process, its deficiency is still at most $5 \delta n + 14 \le 8\delta n + 4$, as it was claimed.
\end{proof}

By Claim~\ref{post-processing}, Lemma~\ref{bi-connected2} can be applied to the new $\mc U_1, \mc U_2, \mc U_3, \mc U_4$.

Except for at most $4(\delta n+4)$ vertices incident to an edge in either $R[\mc U_1 \cup \mc U_2, \mc U_3 \cup \mc U_4]$ or $B[\mc U_1 \cup \mc U_3, \mc U_2 \cup \mc U_4]$, every vertex $v \in \mc U_{5-j}$, where $j \in [4]$, has no neighbors in $\mc U_j$, so $\deg(v) \le n - 1 - |\mc U_j|$. Recall that we have $\deg(v) \ge (3n-1)/4$ for every $v \in V(G)$. Therefore, $|\mc U_j| \le (n-3)/4$ for every $j$. We have
\[
	|\mc U_1| + |\mc U_2| + |\mc U_3| + |\mc U_4| \le n - 3,
\]
leaving $|X_R| + |X_B| \ge 3$. Without loss of generality, we assume $|X_R| \ge |X_B|$; in particular, $|X_R| \ge 2$.

Call a vertex {\em type $(i,j)$} with $i \in \{1,2\}$ and $j \in \{3,4\}$ if $x$ has two or more red edges to each of $\mc U_i$ and $\mc U_j$. A vertex can be given more than one type, but each vertex in $X_R$ has three red edges to each of $\mc U_1 \cup \mc U_2$ and $\mc U_3 \cup \mc U_4$, and therefore each vertex in $X_R$ is given at least one type.

If there are two vertices in $X_R$ with the same type $(i,j)$ then we can use them to form two red vertex-disjoint paths of length $2$ from $\mc U_i$ to $\mc U_j$. By  Lemma~\ref{2matching}, we can find a red cycle of every even length from $[(\frac12 - 8\delta)n, 2t+2]$, in which case we are done. The same happens if there are two vertices $x, x' \in X_R$ with types $(i,j)$ and $(i',j')$ respectively, where $i \ne i'$ and $j \ne j'$.

The outcome in the previous paragraph can only be avoided if $|X_R| = 2$. In this case, the two vertices in $X_R$ must each have only one type, and the two types agree in only one index. Without loss of generality, the two vertices are $x$ and $x'$ with types $(1,3)$ and $(1,4)$ respectively.

\begin{claim}\label{blueedge}
In this case, either for each $j \in \{2,3,4\}$ every edge with both endpoints in $\mc U_j$ is blue, or there is a red cycle of every even length from $[(\frac12 - 8\delta)n, 2t+2]$.
\end{claim}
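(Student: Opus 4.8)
The plan is to prove the contrapositive: assuming that for some $j \in \{2,3,4\}$ the set $\mc U_j$ contains a red edge $uv$, I will construct a red cycle of every even length in $[(\frac12 - 8\delta)n, 2t+2]$. Recall the tools at hand. Since $x$ has type $(1,3)$ it has at least two red neighbours in each of $\mc U_1$ and $\mc U_3$, so for essentially any red neighbour $\alpha_1 \in \mc U_1$ and $\alpha_3 \in \mc U_3$ of $x$ the path $\alpha_1 x \alpha_3$ is a red path of length $2$ whose middle vertex lies outside $\mc U_1 \cup \mc U_2 \cup \mc U_3 \cup \mc U_4$; likewise $x'$ (type $(1,4)$) gives a red path $\beta_1 x' \beta_4$ of length $2$ with $\beta_1 \in \mc U_1$, $\beta_4 \in \mc U_4$. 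By Claim~\ref{post-processing} every vertex has deficiency at most $8\delta n + 4 \le 10\delta n$ in each of $R[\mc U_1,\mc U_2]$ and $R[\mc U_3,\mc U_4]$, and $|\mc U_i| \ge (\frac14 - 4\delta)n - 4 \ge (\frac14 - 5\delta)n$ for $n$ large, so Lemma~\ref{bi-connected2} — and, by its proof, the stronger statement valid for all path lengths in $[80\delta n + 3,\, (\frac12 - 10\delta)n - 1]$ — applies to suitable large subsets of each of these two bipartite graphs.

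The cycle will be the concatenation of the red path $\alpha_1 x \alpha_3$, a red $(\alpha_3,\beta_4)$-path $P$ inside $\mc U_3 \cup \mc U_4$, the red path $\beta_4 x' \beta_1$, and a red $(\beta_1,\alpha_1)$-path $P'$ inside $\mc U_1 \cup \mc U_2$. Since any path in $R[\mc U_1,\mc U_2]$ between two vertices of $\mc U_1$ has even length while a path in $R[\mc U_3,\mc U_4]$ from $\mc U_3$ to $\mc U_4$ has odd length, taking $P,P'$ to be plain paths (not using $uv$) produces only odd cycles; the single red edge $uv$ is precisely what corrects the parity. If $j \in \{3,4\}$, I let $P$ be a connector of length at most $2$ from $\alpha_3$ to an endpoint of $uv$ (such a connector exists because that endpoint and $\alpha_3$ have a common red neighbour in the opposite cluster, each having red-deficiency at most $8\delta n + 4$ there), followed by the edge $uv$, followed by a single long red subpath supplied by Lemma~\ref{bi-connected2}, and I let $P'$ be a single long red path from Lemma~\ref{bi-connected2}. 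If $j = 2$, I do the symmetric thing inside $\mc U_1 \cup \mc U_2$: $P'$ is a connector of length at most $3$, then the edge $uv$, then a long subpath from Lemma~\ref{bi-connected2}, while $P$ is a single long path. In every case, using $uv$ once flips the parity of the segment containing it, so the resulting cycle has even length; and crucially the two applications of Lemma~\ref{bi-connected2} take place in the disjoint cluster-pairs $\mc U_1 \cup \mc U_2$ and $\mc U_3 \cup \mc U_4$, so there is no contention for vertices.

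The cycle then has length $a + b + c$, where $a,b$ are the lengths of the two long red subpaths produced by Lemma~\ref{bi-connected2} and $c \in \{5,6,7,8\}$ is a case-dependent constant accounting for the two length-$2$ paths through $x,x'$, the edge $uv$, and the short connector. For each $j$, the parities of $a$, $b$, $c$ are such that $a+b+c$ is even, and as $a$ and $b$ each range over all admissible values of their (fixed) parity in $[80\delta n + 3,\, (\frac12 - 10\delta)n - 1]$, the sum $a+b+c$ takes every even value in an interval reaching from $\Theta(\delta n)$ up to $(1 - 20\delta)n$. Since $(\frac12 - 8\delta)n$ and $2t+2$ both lie comfortably inside this interval (here one uses $\delta < \frac1{1000}$ and $n = 3t+r$, so $2t+2 < n - 20\delta n$), this produces a red cycle of every even length in $[(\frac12 - 8\delta)n, 2t+2]$, proving the claim.

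The part that needs care is the bookkeeping. One must choose $\alpha_1,\beta_1 \in \mc U_1$ distinct, $\alpha_3 \in \mc U_3$, $\beta_4 \in \mc U_4$, the short connector, and the two long subpaths to be pairwise vertex-disjoint and disjoint from $\{x,x',u,v\}$; this is possible because $x$ and $x'$ each have at least two red neighbours in each relevant cluster, every vertex has red-degree at least $|\mc U_i| - (8\delta n + 4)$ into the opposite cluster, and each long subpath occupies at most about half of its two clusters, leaving room after deleting the $O(1)$ reserved vertices. One must also verify, for each target even length $2\ell$, that $2\ell - c$ decomposes as $a+b$ with $a,b$ in their prescribed ranges, which is immediate since both ranges are intervals of length $\Omega(n)$ and $2\ell - c < n - 20\delta n$. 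Finally, a few degenerate coincidences — for instance a red neighbour of $x$ in $\mc U_3$ being an endpoint of $uv$ — only shorten a connector, possibly to a single vertex, and cause no difficulty.
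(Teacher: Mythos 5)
Your proposal is correct and follows essentially the same route as the paper: a cycle built from the two length-$2$ red paths through $x$ (type $(1,3)$) and $x'$ (type $(1,4)$), long red paths supplied by Lemma~\ref{bi-connected2} (via Claim~\ref{post-processing}) inside $R[\mc U_1,\mc U_2]$ and $R[\mc U_3,\mc U_4]$, and the red edge $uv$ spliced in through a short connector to repair the parity, with coincidences like $u$ or $v$ hitting a chosen endpoint only simplifying matters. The only difference is bookkeeping — the paper prescribes exact path lengths ($2\lceil \ell/2\rceil-1$ and $2\lfloor \ell/2\rfloor-7$) while you let the two long lengths vary and take all achievable sums — which is immaterial.
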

\begin{proof}
Suppose that for some $j \in \{2,3,4\}$ there is a red edge $uv$ with both ends in  $\mc U_j$. Consider first the case  $u,v \in \mc U_2$. Let $xa$ and $xb$ be red edges from $x$ to $\mc U_1$ and $\mc U_3$; let $x'a'$ and $x'b'$ be red edges from $x'$ to $\mc U_1$ and $\mc U_4$, with $a' \ne a$.

We could use Lemma~\ref{2matching} to find a red $(a,a')$-path in $R[\mc U_1, \mc U_2]$ and a red $(b,b')$-path in $R[\mc U_3, \mc U_4]$; however, they would join together to a cycle of odd length. To obtain a cycle of even length, we need to use the red edge $uv$.

More precisely, let $2\ell$ be an even length in $[(\frac12 - 8\delta)n, 2t+2]$. By Lemma~\ref{2matching}, there is a red $(b,b')$-path $P_1$ of length 
$2\lceil \ell/2\rceil-1$. 
 To extend $P_1$ to a red cycle of length $2\ell$, we will find a red $(a,a')$-path of length $2\lfloor \ell/2\rfloor - 3$ in $R[\mc U_1, \mc U_2] \cup \{uv\}$.

Let $c$ be a red neighbor of $v$ in $\mc U_1$ {with $c \neq a,a'$.} 
By Claim~\ref{post-processing}, $a$ and $c$ have a common neighbor $d$ in $\mc U_2$. Excluding vertices $\{a,c,d,v\}$ from $R[\mc U_1, \mc U_2]$, we still have a graph to which Lemma~\ref{2matching} applies, and we can find an $(a', u)$-path $P_2$ in that graph of length $2\lfloor \ell/2\rfloor- 7$.
Now we obtain a cycle of length $2\ell$ as the concatenation $P_1, b'x', x'a', P_2, uv, vc, cd, da, ax, xb$.

A similar argument can be applied if the red edge $uv$ is in $\mc U_3$ or $\mc U_4$, except that we find a red $(b,b')$-path in $R[\mc U_3, \mc U_4] \cup \{uv\}]$ using edge $uv$ instead. It is possible that $u$ or $v$ may coincide with $b$ or $b'$, in which case finding the path is even easier.
\end{proof}

From now on, we assume that the first condition of Claim~\ref{blueedge} holds: $R[\mc U_2], R[\mc U_3], R[\mc U_4]$ are empty.

Suppose that one of the vertices in $X_R$, either $x$ or $x'$, could also have been placed in $X_B$ instead, and if we had done so, we would have $|X_B| \ge 2$. If {the argument in Claim~\ref{blueedge}} were repeated for the blue graph $B$, it would be impossible that three out of $\mc U_1, \mc U_2, \mc U_3, \mc U_4$ also contain no blue edges. {To see this, since we know three of them contain no red edges, if three out of $\mc U_1, \mc U_2, \mc U_3, \mc U_4$ also contain no blue edges, then there is an $i \in [4]$ such that $\mc U_i$ contains no edge in $G$. For a vertex $v_i \in U_i$, there can be at most $\delta n + 4$ edges from $v$ to $U_{5-i}$ and thus it is not adjacent to at least $|\mc U_{5-i}| - (\delta n + 4) \ge (\frac{1}{4} - 4\delta)n - 4 - (\delta n + 4) \ge \frac{1}{4}n - 5\delta n - 8$ vertices in $\mc U_{5-i}$. This contradicts $\deg(v) \ge \frac{3n-1}{4}$, since $|\mc U_i| + \frac{1}{4}n - 5 \delta n - 8 > n-1 - \frac{3n-1}{4}$.}

Therefore our  case is that $|X_B| = 1$ and  none of the vertices in $X_R$ could belong in $X_B$. In particular, $x'$, which has type $(1,4)$, could not belong to $X_B$: it either has at most two blue edges to $\mc U_1 \cup \mc U_3$, or at most two blue edges to $\mc U_2 \cup \mc U_4$. Since $x'$ has type $(1,4)$, and is not of type $(1,3)$ and not of type $(2,4)$, $x'$ has at most one red edge to $\mc U_2$ and $\mc U_3$ respectively. Therefore, for some $i \in \{2,3\}$, $x'$ has at most three edges to $\mc U_i$ (at most two blue edges to $\mc U_i$ and at most one red edge to $\mc U_i$).

We now show that this is impossible, ruling out this final case and completing the proof. 

We have $|X_R| = 2$ and $|X_B| = 1$, so $|\mc U_1| + |\mc U_2| + |\mc U_3| + |\mc U_4| = n-3$, which can only happen if $|\mc U_j| = (n-3)/4$ for all $j$.  Except for at most $4(\delta n + 4)$ vertices incident to an edge in either $R[\mc U_1 \cup \mc U_2, \mc U_3 \cup \mc U_4]$  or $B[\mc U_1 \cup \mc U_3, \mc U_2 \cup \mc U_4]$, every vertex in $v \in \mc U_{j}$ has no neighbors in $\mc U_{5-j}$, so it is already missing $(n-3)/4$ edges, and can reach degree $(3n-1)/4$ only if it is adjacent to every vertex in $X_R \cup X_B$. In particular, almost all vertices in both $\mc U_2$ and $\mc U_3$ must be adjacent to $x'$, contradicting the assumption that $x'$ has at most three edges to one of these parts.

\section{Extension of Lemma~\ref{stability-lemma}}\label{lemma-extension}

In this section, we show that Lemma~\ref{stability-lemma} still holds for $2$-edge-colored graphs $G$ if we allow an edge to be both red and blue simultaneously. 

Let $0 < \delta < \frac1{1000}$ and let $G$ be a graph of sufficiently large order $k$ with $\delta(G) \ge (3/4-\delta)k$. Suppose that we are given a $2$-edge-coloring $E(G) = E(R) \cup E(B)$ where $E(R)$ and $E(B)$ are not necessarily disjoint.

For any $2$-edge-coloring $E(G) = E(R') \cup E(B')$ with $E(R') \cap E(B') = \emptyset$, obtained by assigning edges of $E(R) \cap E(B)$ to just one or the other color, we know that Lemma~\ref{stability-lemma} holds. 

If Case~(i) of Lemma~\ref{stability-lemma} holds for any coloring $(R', B')$, then it also holds for the coloring $(R,B)$, since $R'$ and $B'$ are subgraphs of $R$ and $B$, and we are done.

If Case~(iii) of Lemma~\ref{stability-lemma} holds for a coloring $(R', B')$ but does not hold for the coloring $(R,B)$, let $V(G) = U_1 \cup U_2 \cup U_3 \cup U_4$ be the partition we obtain for the coloring $(R', B')$. There are no edge in $G$ between $U_1$ and $U_4$, or between $U_2$ and $U_3$, because there are neither edges in $R'$ nor in $B'$ between those pairs. Therefore, each vertex of $G$ has at least $(1/4-3\delta)k$ missing edges coming from $G[U_1, U_4]$ or $G[U_2, U_3]$; however, $\delta(G) \ge (3/4-\delta)k$, so each vertex of $G$ can have at most $4\delta k$ other missing edges. In particular, in the subgraphs $R'[U_1, U_2]$, $R'[U_3, U_4]$, $B'[U_1,U_3]$, and $B'[U_2, U_4]$, the minimum degree is $\min_j\{|U_j|\} - 4\delta k \ge (1/4-7\delta)k$. 

By Theorem~\ref{hall}, each of these bipartite subgraphs has a matching saturating the smallest part. To see this, consider  without loss of generality $R'[U_1, U_2]$ and assume $|U_1| \le |U_2|$. For $S \subseteq U_1$ with $1 \le |S| \le (1/4-7\delta)k$, $|N(S)| \ge (1/4-7\delta)k \ge |S|$ because any vertex in $S$ has at least $(1/4-7\delta)k$ neighbors in $U_2$. For $S \subseteq U_1$ with $|S| > 7\delta k$, $|N(S)| = |U_2| \ge |S|$ because any vertex in $U_2$ has fewer than $|S|$ non-neighbors in $U_1$. This covers all possibilities, so Hall's condition holds. Moreover, each of these bipartite subgraphs is connected; two vertices in one part share all but at most $14\delta k \le 0.014k$ neighbors in the other part, which has at least $(1/4-3\delta)k \ge 0.247k \ge 2 \cdot 0.014k + 1$ vertices. So each of $R'$ and $B'$ has two connected components, each with a large matching. 

By assumption, there is an edge of the coloring $(R,B)$ that violates the condition in Case~(iii): a blue edge from $U_1 \cup U_2$ to $U_3 \cup U_4$ that is also red, or a red edge from $U_1 \cup U_3$ to $U_2 \cup U_4$ that is also blue. In the first case, this edge connects the two components of $R'$; in the second case, this edge connects the two components of $B'$. In either case, $R'$ or $B'$ becomes connected, and has a matching saturating at least two of $U_1, U_2, U_3, U_4$. We must have $|U_j| \le (1/4+\delta)k$ for all $j$, otherwise the vertices of $U_{5-j}$ would have degree less than $(3/4-\delta)k$. So the matching contains at least $k - 2 (1/4 + \delta)k = (1/2 - 2\delta)k$ edges, and $(1-4\delta)k \ge 0.996k \ge 0.668k \ge (2/3 + \delta)k$ vertices, and Case~(i) of Lemma~\ref{stability-lemma} holds for the coloring $(R,B)$.

Finally, suppose that for every choice of $(R', B')$, Case~(ii) of Lemma~\ref{stability-lemma} holds. We first consider the possibility that for different choices of $(R', B')$ the color in which the sets $S$ have small maximum degree varies. Then there are two choices of $(R', B')$, say $(R_1, B_1)$ and $(R_2, B_2)$, that differ only in the color of one edge, for which sets $S_1, S_2$ exist of order at least $(2/3 - \delta/2)k$ with $\Delta(R_1[S_1]) \le 10\delta k$ and $\Delta(B_2[S_2]) \le 10\delta k$. We have $|S_1 \cap S_2| \ge (1/3 - \delta)k$; let $v$ be a vertex of $S_1 \cap S_2$ such that the two colorings $(R_1, B_1)$ and $(R_2, B_2)$ agree on the edges incident to $v$. (All but at most two vertices of $S_1 \cap S_2$ have this property, since the two colorings only disagree on one edge.) Then $v$ has at most $10\delta k$ edges of $R_1$ to $S_1 \cap S_2$, and at most $10\delta k$ edges of $B_1$ to $S_1 \cap S_2$: altogether $v$ has at most $20\delta k$ neighbors in $S_1 \cap S_2$. Therefore
\[
	\deg(v) \le k - (1/3 - 21\delta)k = (2/3 + 21\delta)k \le 0.687k < 0.749k \le (3/4 - \delta)k,
\]
contradicting our assumption about the minimum degree of $G$.

Therefore Case~(ii) always holds with the sets $S$  inducing small maximum degree in the same color: without loss of generality, red. Choose the coloring $(R', B')$ in which every edge of $E(R) \cap E(B)$ is red. There is a set $S$ of order at least $(2/3-\delta/2)k$ such that $\Delta(R'[S]) \le 10\delta k$; then $\Delta(R[S]) \le 10 \delta k$ as well, and Case~(ii) of Lemma~\ref{stability-lemma} holds for the coloring $(R,B)$.

\paragraph{Acknowledgment.} We thank the referees for their valuable comments.

\end{document}